\newtheorem{theorem}{Theorem}
\newtheorem{proposition}[theorem]{Proposition}
\newtheorem{corollary}[theorem]{Corollary}
\newtheorem{lemma}[theorem]{Lemma}
\theoremstyle{definition}
\newtheorem{definition}[theorem]{Definition}
\newtheorem{convention}[theorem]{Convention}
\newtheorem{example}[theorem]{Example}
\newtheorem{remark}[theorem]{Remark}
\newcommand{\mcap}{{\cap}}
\newcommand{\mcup}{{\cup}}
\newcommand{\mwedge}{{\curlywedge}}
\newcommand{\mvee}{{\curlyvee}}
\newcommand{\mmid}{{\,|\,}}
\begin{document}

\title{Handlebody-knot invariants derived from unimodular Hopf algebras}
\author{Atsushi Ishii%
\thanks{The first author was supported by JSPS KAKENHI Grant Number 24740037.}
and Akira Masuoka%
\thanks{The second author was supported by JSPS KAKENHI Grant Number 23540039.}}
\date{}

\maketitle

\begin{abstract}
A handlebody-knot is a handlebody embedded in the $3$-sphere.
We establish a uniform method to construct invariants for handlebody-links.
We introduce the category $\mathcal{T}$ of handlebody-tangles and present it by generators and relations.
The result tells us that every functor on $\mathcal{T}$ that gives rise to invariants is derived from
what we call a quantum-commutative quantum-symmetric algebra in the target category.
The example of such algebras of our main concern is finite-dimensional unimodular Hopf algebras.
We investigate how those Hopf algebras give rise to handlebody-knot invariants.
\end{abstract}

\section{Introduction}

A \textit{handlebody-knot} is a handlebody embedded in the $3$-sphere $S^3$;
it is alternatively called a knotted handlebody or a spatial handlebody.
A \textit{handlebody-link} is a disjoint union of handlebodies embedded in $S^3$.
Two handlebody-links are \textit{equivalent} if there exists an isotopy of $S^3$
which takes one to the other, or equivalently if there exists
an orientation-preserving self-homeomorphism of $S^3$ which sends one to the other.
The aim of this paper is to establish a uniform method to construct invariants for handlebody-links.

A handlebody-knot was first introduced as a neighborhood equivalence class
of a spatial graph by Suzuki~\cite{Suzuki70}.
Since neighborhood equivalence classes of knots coincide with ambient isotopy classes of knots,
genus $1$ handlebody-knots correspond to knots,
which means that a handlebody-knot is a generalization of a knot.
Study of knots with invariants has made great progress
since the discovery of the Jones polynomial and the subsequent so-called quantum invariants;
see, for example,~\cite{Ohtsuki02}.
Quantum invariants are derived from representations of quantum groups.
Superiority of a quantum invariant is that it can be derived from any representation of any quantum group.
A functor from the category of tangles to that of vector spaces,
which is obtained via a representation of a quantum group,
gives a quantum invariant of tangles, especially of links.

Invariants of handlebody-links can be realized as those of spatial trivalent graphs
which are invariant under IH-moves~\cite{Ishii08},
where an IH-move is a local move on spatial trivalent graphs.
When a handlebody-link $H$ is a regular neighborhood of a spatial graph $K$,
we say that $H$ is represented by $K$.
In this paper trivalent graphs may contain circle components.
Then any handlebody-link can be represented by some spatial trivalent graph.
The existence of trivalent vertices
distinguishes mostly handlebody-links from ordinary links, and gives us the biggest barrier
when we construct functors on handlebody-tangles.
In order to get over this barrier,
we introduce quantum-commutative quantum-symmetric algebras (see Definition~\ref{def:q-symmetric-alg}),
and assign the multiplication mapping to a trivalent vertex.
Then invariance under IH-moves follows from the associativity of multiplication.
We obtain an invariant for handlebody-links with every quantum-commutative quantum-symmetric algebra.
A good example of quantum-commutative quantum-symmetric algebras is given by
finite-dimensional unimodular Hopf algebras, which include finite groups as the simplest example.
Our invariant derived from a finite group coincides with the number of the homomorphisms
from the fundamental group of the exterior of a handlebody-knot to the group.

We remark that Mizusawa and Murakami~\cite{MizusawaMurakami14} constructed quantum $U_q(sl_2)$
type invariants for handlebody-knots in $S^3$ via Yokota's invariants~\cite{Yokota96}.

This paper is organized as follows.
In Section 2, we introduce the category $\mathcal{T}$ of handlebody-tangles
and present it by generators and relations;
the result enables us to construct on $\mathcal{T}$ the functors which gives our invariants.
In Section 3, we introduce the notion of quantum-commutative quantum-symmetric algebras
and see how the invariants are obtained from those algebras.
In Section 4, we focus on quantum-commutative quantum-symmetric algebras
in the categories of Yetter--Drinfeld modules and show that
every finite-dimensional unimodular Hopf algebra $A$ is a quantum-commutative quantum-symmetric algebra
in the category of Yetter--Drinfeld modules over $A$.
In Section 5, we investigate the invariants derived from unimodular Hopf algebras
in some algebraic and geometric situations such as the disk sum or the mirror image.
In Sections 6 and 7, we give examples of invariants derived from unimodular Hopf algebras
together with their data needed to compute the invariants.

\section{The category of handlebody-tangles}

A \textit{handlebody-tangle} is a disjoint union of handlebodies
embedded in a cube $I^3$ such that the intersection of the handlebodies
and the boundary of $I^3$ is the union of sequences of disks in the
top and bottom squares as shown in Figure~\ref{fig:handlebody-tangle}.
We call the disks in the top (resp.~bottom) square the
\textit{top (resp.~bottom) end disks} of the handlebody-tangle.
Two handlebody-tangles are assumed to be the same if one can be
transformed into the other by an isotopy of $I^3$ preserving the order
of the end disks in the top and bottom squares.
We remark that a handlebody-tangle with no end disks corresponds to a handlebody-link.

\begin{figure}
\mbox{}\hfill
\begin{picture}(100,110)
\qbezier(45.5,63)(50,57)(50,50)
\qbezier(50,50)(48.5,31.5)(30,30)
\qbezier(30,30)(11.5,31.5)(10,50)
\qbezier(10,50)(11.5,68.5)(30,70)
\qbezier(30,70)(33,70)(35.5,69)
\qbezier(25,100)(25,80)(15.5,76.5)
\qbezier(15.5,76.5)(0.5,71)(0,50)
\qbezier(0,50)(0.5,29)(15.5,23.5)
\qbezier(15.5,23.5)(25,20)(25,0)
\qbezier(35,100)(35,80)(44.5,76.5)
\qbezier(54,68.5)(60,61)(60,50)
\qbezier(60,50)(59.5,29)(44.5,23.5)
\qbezier(44.5,23.5)(35,20)(35,0)
\qbezier(44.5,37)(40,43)(40,50)
\qbezier(40,50)(41.5,68.5)(60,70)
\qbezier(60,30)(57,30)(54.5,31)
\qbezier(60,30)(65,30)(70,30)
\qbezier(60,70)(65,70)(70,70)
\qbezier(80,67.5)(88.5,63.5)(90,50)
\qbezier(90,50)(88.5,31.5)(70,30)
\qbezier(36,31.5)(30,39)(30,50)
\qbezier(30,50)(32,78)(60,80)
\qbezier(60,20)(52,20)(45.5,23.5)
\qbezier(60,20)(65,20)(70,20)
\qbezier(60,80)(65,80)(70,80)
\qbezier(80,78)(98,73)(100,50)
\qbezier(100,50)(98,22)(70,20)
\qbezier(70,100)(70,65)(70,30)
\qbezier(80,100)(80,65)(80,33)
\qbezier(70,20)(70,10)(70,0)
\qbezier(80,21.5)(80,10)(80,0)
\qbezier(0,50)(5,46)(10,50)
\qbezier[6](0,50)(5,54)(10,50)
\qbezier(50,50)(55,46)(60,50)
\qbezier[6](50,50)(55,54)(60,50)
\qbezier(90,50)(95,46)(100,50)
\qbezier[6](90,50)(95,54)(100,50)
\qbezier(25,100)(30,96)(35,100)
\qbezier(25,100)(30,104)(35,100)
\qbezier(70,100)(75,96)(80,100)
\qbezier(70,100)(75,104)(80,100)
\qbezier(25,0)(30,-4)(35,0)
\qbezier[6](25,0)(30,4)(35,0)
\qbezier(70,0)(75,-4)(80,0)
\qbezier[6](70,0)(75,4)(80,0)
\end{picture}
\hfill\hfill
\begin{picture}(100,110)
\qbezier(0,0)(0,50)(0,100)
\qbezier(10,0)(10,50)(10,100)
\qbezier(0,100)(5,96)(10,100)
\qbezier(0,100)(5,104)(10,100)
\qbezier(0,0)(5,-4)(10,0)
\qbezier[6](0,0)(5,4)(10,0)
\qbezier(30,0)(30,50)(30,100)
\qbezier(40,0)(40,50)(40,100)
\qbezier(30,100)(35,96)(40,100)
\qbezier(30,100)(35,104)(40,100)
\qbezier(30,0)(35,-4)(40,0)
\qbezier[6](30,0)(35,4)(40,0)
\put(65,50){\makebox(0,0){$\cdots$}}
\qbezier(90,0)(90,50)(90,100)
\qbezier(100,0)(100,50)(100,100)
\qbezier(90,100)(95,96)(100,100)
\qbezier(90,100)(95,104)(100,100)
\qbezier(90,0)(95,-4)(100,0)
\qbezier[6](90,0)(95,4)(100,0)
\end{picture}
\hfill\mbox{}
\caption{}
\label{fig:handlebody-tangle}
\end{figure}

We define a strict tensor category $\mathcal{T}$ of handlebody-tangles
as follows.
The objects of $\mathcal{T}$ consist of finite sequences of disks.
We denote by the number $n$ the sequence of $n$ disks.
Then $\operatorname{Ob(\mathcal{T})}=\{0,1,2,\ldots\}$.
The morphisms of $\mathcal{T}$ are handlebody-tangles.
The source $s(T)$ and the target $b(T)$ of a handlebody-tangle $T$ with
$m$ top end disks and $n$ bottom end disks are defined by $s(T)=n$ and
$b(T)=m$.
The identity morphism $\operatorname{id}_n$ of an object $n$ is the
equivalence class of the trivial handlebody-tangle with $n$ top end
disks and $n$ bottom end disks, where the trivial handlebody-tangle is
the direct product of disks and the interval $I$ as shown in the right
picture of Figure~\ref{fig:handlebody-tangle}.
We remark that the identity morphism $\operatorname{id}_0$ is the empty set.
For handlebody-tangles $T,T'$ such that $s(T)=b(T')$, the composition
$T\circ T'$ of $T$ and $T'$ is the handlebody-tangle obtained by placing
$T$ on top of $T'$ and gluing the bottom end disks of $T$ and the top
end disks of $T'$ as shown in the left picture of
Figure~\ref{fig:composition_tensor}.
Then $\mathcal{T}$ is a category.
We equip $\mathcal{T}$ with a tensor product as follows.
For objects $m$, $n$ of $\mathcal{T}$, we define $m\otimes n:=m+n$.
For handlebody-tangles $T,T'$, the tensor product $T\otimes T'$ is the
handlebody-tangle obtained by placing $T'$ to the right of $T$ as show
in the right picture of Figure~\ref{fig:composition_tensor}.
Then the handlebody-tangle category $\mathcal{T}$ equipped with the
tensor product is a strict tensor category with the unit $0$.

\begin{figure}
\mbox{}\hfill
\begin{picture}(90,140)
\qbezier(10,130)(15,126)(20,130)
\qbezier(10,130)(15,134)(20,130)
\qbezier(10,30)(10,40)(10,50)
\qbezier(20,30)(20,40)(20,50)
\qbezier(10,70)(10,80)(10,90)
\qbezier(20,70)(20,80)(20,90)
\qbezier(10,110)(10,120)(10,130)
\qbezier(20,110)(20,120)(20,130)
\qbezier(10,30)(15,26)(20,30)
\qbezier[6](10,30)(15,34)(20,30)
\qbezier(30,130)(35,126)(40,130)
\qbezier(30,130)(35,134)(40,130)
\qbezier(30,30)(30,40)(30,50)
\qbezier(40,30)(40,40)(40,50)
\qbezier(30,70)(30,80)(30,90)
\qbezier(40,70)(40,80)(40,90)
\qbezier(30,110)(30,120)(30,130)
\qbezier(40,110)(40,120)(40,130)
\qbezier(30,30)(35,26)(40,30)
\qbezier[6](30,30)(35,34)(40,30)
\put(55,120){\makebox(0,0){$\cdots$}}
\put(55,80){\makebox(0,0){$\cdots$}}
\put(55,40){\makebox(0,0){$\cdots$}}
\qbezier(70,130)(75,126)(80,130)
\qbezier(70,130)(75,134)(80,130)
\qbezier(70,30)(70,40)(70,50)
\qbezier(80,30)(80,40)(80,50)
\qbezier(70,70)(70,80)(70,90)
\qbezier(80,70)(80,80)(80,90)
\qbezier(70,110)(70,120)(70,130)
\qbezier(80,110)(80,120)(80,130)
\qbezier(70,30)(75,26)(80,30)
\qbezier[6](70,30)(75,34)(80,30)
\qbezier(0,110)(45,110)(90,110)
\qbezier(0,90)(45,90)(90,90)
\qbezier(0,90)(0,100)(0,110)
\qbezier(90,90)(90,100)(90,110)
\put(45,100){\makebox(0,0){$T$}}
\qbezier(0,70)(45,70)(90,70)
\qbezier(0,50)(45,50)(90,50)
\qbezier(0,50)(0,60)(0,70)
\qbezier(90,50)(90,60)(90,70)
\put(45,60){\makebox(0,0){$T'$}}
\put(45,10){\makebox(0,0){$T\circ T'$}}
\end{picture}
\hfill\hfill\hfill
\begin{picture}(190,140)
\qbezier(10,130)(15,126)(20,130)
\qbezier(10,130)(15,134)(20,130)
\qbezier(10,30)(10,45)(10,60)
\qbezier(20,30)(20,45)(20,60)
\qbezier(10,100)(10,115)(10,130)
\qbezier(20,100)(20,115)(20,130)
\qbezier(10,30)(15,26)(20,30)
\qbezier[6](10,30)(15,34)(20,30)
\qbezier(30,130)(35,126)(40,130)
\qbezier(30,130)(35,134)(40,130)
\qbezier(30,30)(30,45)(30,60)
\qbezier(40,30)(40,45)(40,60)
\qbezier(30,100)(30,115)(30,130)
\qbezier(40,100)(40,115)(40,130)
\qbezier(30,30)(35,26)(40,30)
\qbezier[6](30,30)(35,34)(40,30)
\put(55,115){\makebox(0,0){$\cdots$}}
\put(55,45){\makebox(0,0){$\cdots$}}
\qbezier(70,130)(75,126)(80,130)
\qbezier(70,130)(75,134)(80,130)
\qbezier(70,30)(70,45)(70,60)
\qbezier(80,30)(80,45)(80,60)
\qbezier(70,100)(70,115)(70,130)
\qbezier(80,100)(80,115)(80,130)
\qbezier(70,30)(75,26)(80,30)
\qbezier[6](70,30)(75,34)(80,30)
\qbezier(0,100)(45,100)(90,100)
\qbezier(0,60)(45,60)(90,60)
\qbezier(0,60)(0,80)(0,100)
\qbezier(90,60)(90,80)(90,100)
\put(45,80){\makebox(0,0){$T$}}
\qbezier(110,130)(115,126)(120,130)
\qbezier(110,130)(115,134)(120,130)
\qbezier(110,30)(110,45)(110,60)
\qbezier(120,30)(120,45)(120,60)
\qbezier(110,100)(110,115)(110,130)
\qbezier(120,100)(120,115)(120,130)
\qbezier(110,30)(115,26)(120,30)
\qbezier[6](110,30)(115,34)(120,30)
\qbezier(130,130)(135,126)(140,130)
\qbezier(130,130)(135,134)(140,130)
\qbezier(130,30)(130,45)(130,60)
\qbezier(140,30)(140,45)(140,60)
\qbezier(130,100)(130,115)(130,130)
\qbezier(140,100)(140,115)(140,130)
\qbezier(130,30)(135,26)(140,30)
\qbezier[6](130,30)(135,34)(140,30)
\put(155,115){\makebox(0,0){$\cdots$}}
\put(155,45){\makebox(0,0){$\cdots$}}
\qbezier(170,130)(175,126)(180,130)
\qbezier(170,130)(175,134)(180,130)
\qbezier(170,30)(170,45)(170,60)
\qbezier(180,30)(180,45)(180,60)
\qbezier(170,100)(170,115)(170,130)
\qbezier(180,100)(180,115)(180,130)
\qbezier(170,30)(175,26)(180,30)
\qbezier[6](170,30)(175,34)(180,30)
\qbezier(100,100)(145,100)(190,100)
\qbezier(100,60)(145,60)(190,60)
\qbezier(100,60)(100,80)(100,100)
\qbezier(190,60)(190,80)(190,100)
\put(145,80){\makebox(0,0){$T'$}}
\put(95,10){\makebox(0,0){$T\otimes T'$}}
\end{picture}
\hfill\mbox{}
\caption{}
\label{fig:composition_tensor}
\end{figure}

We give generators and relations for the strict tensor category $\mathcal{T}$.
Every morphism in $\mathcal{T}$ can be presented by the generators
with the operations of composing and tensoring applied.
Two morphisms are identical if and only if they, presented as above,
deform to each other by using the relations.
We refer the reader to~\cite[Chapter XII]{Kassel95} for details of
generators and relations for a strict tensor category.
Let $\,|\,$, $\cap$, $\cup$, $\curlywedge$, $\curlyvee$, $X$, and
$\overline{X}$ be the handlebody-tangles depicted in
Figure~\ref{fig:elementary_handlebody-tangle}.
The following proposition immediately follows from Theorem~11
in~\cite{IshiharaIshii12}.

\begin{figure}
\mbox{}\hfill
\begin{picture}(10,90)
\qbezier(0,80)(5,76)(10,80)
\qbezier(0,80)(5,84)(10,80)
\qbezier(0,30)(0,55)(0,80)
\qbezier(10,30)(10,55)(10,80)
\qbezier(0,30)(5,26)(10,30)
\qbezier[6](0,30)(5,34)(10,30)
\put(5,10){\makebox(0,0){$|$}}
\end{picture}
\hfill
\begin{picture}(40,90)
\qbezier(0,30)(0,60)(20,60)
\qbezier(40,30)(40,60)(20,60)
\qbezier(10,30)(10,50)(20,50)
\qbezier(30,30)(30,50)(20,50)
\qbezier(0,30)(5,26)(10,30)
\qbezier[6](0,30)(5,34)(10,30)
\qbezier(30,30)(35,26)(40,30)
\qbezier[6](30,30)(35,34)(40,30)
\put(20,10){\makebox(0,0){$\cap$}}
\end{picture}
\hfill
\begin{picture}(40,90)
\qbezier(0,80)(5,76)(10,80)
\qbezier(0,80)(5,84)(10,80)
\qbezier(30,80)(35,76)(40,80)
\qbezier(30,80)(35,84)(40,80)
\qbezier(0,80)(0,50)(20,50)
\qbezier(40,80)(40,50)(20,50)
\qbezier(10,80)(10,60)(20,60)
\qbezier(30,80)(30,60)(20,60)
\put(20,10){\makebox(0,0){$\cup$}}
\end{picture}
\hfill
\begin{picture}(40,90)
\qbezier(15,80)(20,76)(25,80)
\qbezier(15,80)(20,84)(25,80)
\qbezier(10,58)(15,60.5)(15,80)
\qbezier(30,58)(25,60.5)(25,80)
\qbezier(0,30)(0,53)(10,58)
\qbezier(40,30)(40,53)(30,58)
\qbezier(10,30)(10,50)(20,50)
\qbezier(30,30)(30,50)(20,50)
\qbezier(0,30)(5,26)(10,30)
\qbezier[6](0,30)(5,34)(10,30)
\qbezier(30,30)(35,26)(40,30)
\qbezier[6](30,30)(35,34)(40,30)
\put(20,10){\makebox(0,0){$\curlywedge$}}
\end{picture}
\hfill
\begin{picture}(40,90)
\qbezier(0,80)(5,76)(10,80)
\qbezier(0,80)(5,84)(10,80)
\qbezier(30,80)(35,76)(40,80)
\qbezier(30,80)(35,84)(40,80)
\qbezier(10,52)(15,49.5)(15,30)
\qbezier(30,52)(25,49.5)(25,30)
\qbezier(0,80)(0,57)(10,52)
\qbezier(40,80)(40,57)(30,52)
\qbezier(10,80)(10,60)(20,60)
\qbezier(30,80)(30,60)(20,60)
\qbezier(15,30)(20,26)(25,30)
\qbezier[6](15,30)(20,34)(25,30)
\put(20,10){\makebox(0,0){$\curlyvee$}}
\end{picture}
\hfill
\begin{picture}(40,90)
\qbezier(0,80)(5,76)(10,80)
\qbezier(0,80)(5,84)(10,80)
\qbezier(30,80)(35,76)(40,80)
\qbezier(30,80)(35,84)(40,80)
\qbezier(0,30)(0,42)(20,62)
\qbezier(20,62)(30,72)(30,80)
\qbezier(10,30)(10,38)(20,48)
\qbezier(20,48)(40,68)(40,80)
\qbezier(40,30)(40,42)(27,55)
\qbezier(20,62)(10,72)(10,80)
\qbezier(30,30)(30,38)(20,48)
\qbezier(13,55)(0,68)(0,80)
\qbezier(0,30)(5,26)(10,30)
\qbezier[6](0,30)(5,34)(10,30)
\qbezier(30,30)(35,26)(40,30)
\qbezier[6](30,30)(35,34)(40,30)
\put(20,10){\makebox(0,0){$X$}}
\end{picture}
\hfill
\begin{picture}(40,90)
\qbezier(40,80)(35,76)(30,80)
\qbezier(40,80)(35,84)(30,80)
\qbezier(10,80)(5,76)(0,80)
\qbezier(10,80)(5,84)(0,80)
\qbezier(40,30)(40,42)(20,62)
\qbezier(20,62)(10,72)(10,80)
\qbezier(30,30)(30,38)(20,48)
\qbezier(20,48)(0,68)(0,80)
\qbezier(0,30)(0,42)(13,55)
\qbezier(20,62)(30,72)(30,80)
\qbezier(10,30)(10,38)(20,48)
\qbezier(27,55)(40,68)(40,80)
\qbezier(40,30)(35,26)(30,30)
\qbezier[6](40,30)(35,34)(30,30)
\qbezier(10,30)(5,26)(0,30)
\qbezier[6](10,30)(5,34)(0,30)
\put(20,10){\makebox(0,0){$\overline{X}$}}
\end{picture}
\hfill\mbox{}
\caption{}
\label{fig:elementary_handlebody-tangle}
\end{figure}

\begin{proposition} \label{prop:old-relations}
The strict tensor category $\mathcal{T}$ is generated by the six morphisms
\[ \cap,\,\cup,\,\curlywedge,\,\curlyvee,\,X,\,\overline{X} \]
and the relations
\begin{align}
& (\mmid\mcap)\circ(\mcup\mmid)=\mmid=(\mcap\mmid)\circ(\mmid\mcup), \label{eq:old1} \\
& (\mmid\mcap)\circ(X\mmid)=(\mcap\mmid)\circ(\mmid\overline{X}), \label{eq:old2} \\
& (\mmid\mcap)\circ(\overline{X}\mmid)=(\mcap\mmid)\circ(\mmid X), \label{eq:old3} \\
& (\mmid\mcap)\circ(\mvee\mmid)=\mwedge=(\mcap\mmid)\circ(\mmid\mvee), \label{eq:old4} \\
& (\mmid\mcap)\circ(X\mmid)\circ(\mmid\mcup)=\mmid
=(\mmid\mcap)\circ(\overline{X}\mmid)\circ(\mmid\mcup), \label{eq:old5} \\
& X\circ\overline{X}=\mmid\mmid, \label{eq:old6} \\
& (X\mmid)\circ(\mmid X)\circ(X\mmid)=(\mmid X)\circ(X\mmid)\circ(\mmid X), \label{eq:old7} \\
& \mwedge\circ X=\mwedge=\mwedge\circ\overline{X}, \label{eq:old8} \\
& (\mwedge\mmid)\circ(\mmid X)\circ(X\mmid)=X\circ(\mmid\mwedge), \label{eq:old9} \\
& (\mmid\mwedge)\circ(X\mmid)\circ(\mmid X)=X\circ(\mwedge\mmid), \label{eq:old10} \\
& \mwedge\circ(\mwedge\mmid)=\mwedge\circ(\mmid\mwedge), \label{eq:old11}
\end{align}
where we have presented $A\otimes B$ by the juxtaposition $AB$.
\end{proposition}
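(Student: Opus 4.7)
The approach is to reduce the statement directly to \cite[Theorem~11]{IshiharaIshii12}, which gives a complete list of local moves on diagrams of spatial trivalent graphs in a rectangle whose equivalence classes are in bijection with the handlebody-tangles considered here. The first step is to translate between handlebody-tangles and such diagrams via the regular-neighborhood correspondence of \cite{Ishii08}: every handlebody-tangle deformation-retracts onto a spatial trivalent graph with legs attached to the top and bottom end disks, and equivalence of handlebody-tangles corresponds to ambient isotopy of these graphs modulo IH-moves, which is precisely the equivalence relation studied in \cite{IshiharaIshii12}.

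Next I would invoke the standard Morse-position argument (cf.\ \cite[Chapter~XII]{Kassel95}) to see that any such diagram, after a small perturbation, decomposes into horizontal slices each of which is a tensor product of a single elementary piece---$\mcap$, $\mcup$, $\mwedge$, $\mvee$, $X$, or $\overline{X}$---with vertical strands. This yields the generation statement that the six listed morphisms generate $\mathcal{T}$ under $\circ$ and $\otimes$.

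For the relations, I would walk through the list of local moves in \cite[Theorem~11]{IshiharaIshii12} and match each to one of (\ref{eq:old1})--(\ref{eq:old11}) modulo strict tensor-category isotopy. Concretely, (\ref{eq:old1}) and (\ref{eq:old4}) encode the snake identities that make $(\mcap,\mcup)$ and $(\mwedge,\mvee)$ dualities; (\ref{eq:old2}) and (\ref{eq:old3}) express compatibility of the crossings with that duality; (\ref{eq:old5}) is the framed Reidemeister~I move; (\ref{eq:old6}) is invertibility of the crossing (Reidemeister~II); (\ref{eq:old7}) is Reidemeister~III; (\ref{eq:old8}) realizes the IH-move in the presence of crossings; (\ref{eq:old9}) and (\ref{eq:old10}) express that a crossing slides past a trivalent vertex; and (\ref{eq:old11}) is associativity of the trivalent vertex.

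The main obstacle is this last verification: showing that these eleven relations, combined with the axioms of a strict tensor category, suffice to recover every orientation and top/bottom configuration of the moves in \cite{IshiharaIshii12}. This amounts to using the duality relations (\ref{eq:old1})--(\ref{eq:old4}) to rotate or reflect each missing version of a move into one of the listed forms, a routine but non-negligible bookkeeping exercise. Once this is carried out, the two presentations become equivalent and the proposition follows immediately from \cite[Theorem~11]{IshiharaIshii12}, as asserted.
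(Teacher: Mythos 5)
Your proposal is correct and follows exactly the route the paper takes: the paper offers no proof at all beyond the single remark that the proposition ``immediately follows from Theorem~11 in \cite{IshiharaIshii12},'' and your sketch simply fills in the translation (neighborhood correspondence, Morse decomposition, matching relations to local moves) that this citation implicitly relies on. The bookkeeping you flag at the end is real but routine, and nothing in your outline would fail.
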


We improve below the presentation of $\mathcal{T}$ above into a more
economical one, which will be crucial when we prove Proposition~\ref{prop:F_A1}.

\begin{proposition} \label{prop:new-relations}
The strict tensor category $\mathcal{T}$ is generated by the five morphisms
\[ \cap,\,\cup,\,\curlywedge,\,X,\,\overline{X} \]
and the relations \eqref{eq:old1}, \eqref{eq:old6}, \eqref{eq:old7},
\eqref{eq:old9}, \eqref{eq:old10}, \eqref{eq:old11} together with
\begin{align}
& \mcap\circ(\mwedge\mmid)=\mcap\circ(\mmid\mwedge), \label{eq:new1} \\
& \mcap\circ X=\mcap, \label{eq:new2} \\
& (\mcap\mmid)\circ(\mmid X)\circ(X\mmid)=\mmid\mcap, \label{eq:new3} \\
& (\mmid\mcap)\circ(X\mmid)\circ(\mmid X)=\mcap\mmid, \label{eq:new4} \\
& \mwedge\circ X=\mwedge. \label{eq:new5}
\end{align}
\end{proposition}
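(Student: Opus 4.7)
The plan is to verify the new presentation via Tietze transformations starting from Proposition~\ref{prop:old-relations}. The first step is to eliminate the generator $\mvee$. Guided by the second half of \eqref{eq:old4} together with the snake \eqref{eq:old1}, I define
\[
\mvee := (\mmid\otimes\mwedge)\circ(\mcup\otimes\mmid)
\]
in the reduced presentation. Unfolding both halves of \eqref{eq:old4} under this substitution produces snake-shaped compositions that collapse by \eqref{eq:old1} and the interchange law, so \eqref{eq:old4} becomes redundant. It then remains to show that the kept old relations together with \eqref{eq:new1}--\eqref{eq:new5} cut out the same equivalence on composites as the full original list.

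Passing from the old relations to the new ones is largely mechanical. Relation \eqref{eq:new5} is the first half of \eqref{eq:old8}. Composing both sides of \eqref{eq:old3} with $X\otimes\mmid$ on the right and cancelling via $X\circ\overline{X}=\mmid\mmid$ from \eqref{eq:old6} yields \eqref{eq:new3}, and analogously \eqref{eq:old2} gives \eqref{eq:new4}. For \eqref{eq:new1}, substituting the second half of \eqref{eq:old4} into $\mwedge\otimes\mmid$ and the first half into $\mmid\otimes\mwedge$ exhibits each of $\mcap\circ(\mwedge\mmid)$ and $\mcap\circ(\mmid\mwedge)$ as $(\mcap\otimes\mcap)\circ(\mmid\otimes\mvee\otimes\mmid)$, proving equality. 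Relation \eqref{eq:new2} requires a subtler manipulation combining \eqref{eq:old8}, \eqref{eq:old5} and \eqref{eq:old3}: the auxiliary identity $X\circ\mcup=\mcup$ can first be extracted from \eqref{eq:old5} and \eqref{eq:old3} by rigid duality, from which $\mcap\circ X=\mcap$ follows via a dual snake. For the reverse direction, \eqref{eq:old3} is recovered from \eqref{eq:new3} by composing with $\overline{X}\otimes\mmid$ on the right and applying \eqref{eq:old6}, and analogously \eqref{eq:old2} from \eqref{eq:new4}; the first half of \eqref{eq:old8} is \eqref{eq:new5}, and its second half follows by composing with $\overline{X}$ on the right together with \eqref{eq:old6}.

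The main obstacle is recovering \eqref{eq:old5} from the new relations. My strategy is to insert the trivial identity $(\mmid\otimes X)\circ(\mmid\otimes\overline{X}) = \mmid\otimes\mmid\otimes\mmid$ into the middle of the left-hand side of \eqref{eq:old5} and apply \eqref{eq:new4} to the three leading factors, which reduces the problem to the auxiliary identities $\overline{X}\circ\mcup=\mcup$ and, for the second half, $X\circ\mcup=\mcup$. These are the rotational duals of \eqref{eq:new2} but are not formal consequences of \eqref{eq:new2} alone; I anticipate deriving them by rewriting $\mcup$ via a double snake as $(\mmid\otimes\mcap\otimes\mmid)\circ(\mcup\otimes\mcup)$, transporting $X$ across the intermediate $\mcap$ using \eqref{eq:new3} or \eqref{eq:new4}, and collapsing the resulting snake via \eqref{eq:old1}. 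Once these identities are in hand, \eqref{eq:old5} follows immediately from \eqref{eq:old1}.
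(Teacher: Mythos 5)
Your overall architecture coincides with the paper's: eliminate $\curlyvee$ by a Tietze transformation and then recover \eqref{eq:old2}, \eqref{eq:old3}, \eqref{eq:old4}, \eqref{eq:old5}, \eqref{eq:old8} from the retained relations. Your derivations of \eqref{eq:old2}, \eqref{eq:old3} and \eqref{eq:old8} in the converse direction are correct and agree with the paper's. One inaccuracy in the elimination step: with your substitution $\mvee=(\mmid\mwedge)\circ(\mcup\mmid)$, the half $(\mmid\mcap)\circ(\mvee\mmid)=\mwedge$ of \eqref{eq:old4} does \emph{not} collapse by \eqref{eq:old1} and the interchange law alone. Unfolding gives $(\mmid\otimes[\mcap\circ(\mwedge\mmid)])\circ(\mcup\mmid\mmid)$, and the snake \eqref{eq:old1} only applies after one replaces $\mcap\circ(\mwedge\mmid)$ by $\mcap\circ(\mmid\mwedge)$ using the Frobenius relation \eqref{eq:new1}; the Frobenius property is genuinely needed here and is not a consequence of \eqref{eq:old1}. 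Since \eqref{eq:new1} is among the retained relations this is repairable, but as stated the claim is wrong (the paper's verification of its analogue \eqref{eq:old4'} uses \eqref{eq:new1} at exactly this point).

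The substantive gap is \eqref{eq:old5}. Your reduction of both halves to the auxiliary identities $\overline{X}\circ\mcup=\mcup$ and $X\circ\mcup=\mcup$ is valid, but these identities carry the entire difficulty of \eqref{eq:old5}, and the sketched derivation does not close: writing $\mcup=(\mmid\mcap\mmid)\circ(\mcup\,\mcup)$ and converting the middle cap via \eqref{eq:new3} into $(\mcap\mmid)\circ(\mmid X)\circ(X\mmid)$ (or via \eqref{eq:new4} into its mirror) produces the factor $(X\mmid\mmid)\circ(\mcup\,\mcup)=(X\circ\mcup)\otimes\mcup$ (resp.\ $(\mmid\mmid X)\circ(\mcup\,\mcup)=\mcup\otimes(X\circ\mcup)$), so you land back on the identity you are trying to prove; the relations \eqref{eq:new3}, \eqref{eq:new4} only move a cap past an \emph{adjacent} strand, whereas the crossing you must absorb joins the two outer legs. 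What is missing is the explicit rotation computation that the paper performs: insert a snake on the surviving strand, use \eqref{eq:old6} to create an $X\circ\overline{X}$ pair, slide the resulting caps through the crossings with \eqref{eq:new3}/\eqref{eq:new4}, and collapse snakes to reach $(\mmid\mcap)\circ(X\mmid)\circ(\mmid\mcup)=(\mmid\mcap)\circ(\mmid\overline{X})\circ(\mcup\mmid)$, after which \eqref{eq:new2} and \eqref{eq:old6} finish. Until you supply that computation (or an equivalent one establishing $X\circ\mcup=\mcup$ and $\overline{X}\circ\mcup=\mcup$ from the new relations), your proof of \eqref{eq:old5}, and hence of the proposition, is incomplete.
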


\begin{proof}
The five morphisms generate $\mathcal{T}$ since we have
\begin{align*}
\mvee=(\mwedge\mmid)\circ(\mmid\mcup).
\end{align*}
By this equality the relation \eqref{eq:old4} turns into the relation
\begin{align}
(\mwedge\,\mcap)\circ(\mmid\mcup\mmid)=\mwedge
=(\mcap\mmid)\circ(\mmid\mwedge\mmid)\circ(\mmid\mmid\mcup)
\label{eq:old4'}
\end{align}
We see that the conditions in Proposition~\ref{prop:new-relations}
follow from those in Proposition~\ref{prop:old-relations}.
To prove the converse it suffices to verify \eqref{eq:old2},
\eqref{eq:old3}, \eqref{eq:old5} and \eqref{eq:old4'}.
This is done as follows.
\begin{align*}
& (\mmid\mcap)\circ(X\mmid)
=(\mmid\mcap)\circ(X\mmid)\circ(\mmid X)\circ(\mmid\overline{X})
=(\mcap\mmid)\circ(\mmid\overline{X}), \\
& (\mmid\mcap)\circ(\overline{X}\mmid)
=(\mcap\mmid)\circ(\mmid X)\circ(X\mmid)\circ(\overline{X}\mmid)
=(\mcap\mmid)\circ(\mmid X), \\
& (\mmid\mcap)\circ(X\mmid)\circ(\mmid\mcup)
=(\mmid\mcap\,\mcap)\circ(\mcup X\mmid)\circ(\mmid\mcup) \\
&\hspace{5mm}=(\mmid\mcap\,\mcap)\circ(\mmid\mmid X\mmid)\circ(\mmid X\mcup)
\circ(\mmid\overline{X})\circ(\mcup\mmid) \\
&\hspace{5mm}=(\mmid\mcap)\circ(\mmid\mmid\mcap\mmid)
\circ(\mmid\overline{X}\mmid\mmid)\circ(\mcup\mmid\mcup)
=(\mmid\mcap)\circ(\mmid\overline{X})\circ(\mcup\mmid) \\
&\hspace{5mm}=(\mmid\mcap)\circ(\mmid X)\circ(\mmid\overline{X})\circ(\mcup\mmid)
=(\mmid\mcap)\circ(\mcup\mmid)=\mmid, \\
& (\mmid\mcap)\circ(\overline{X}\mmid)\circ(\mmid\mcup)
=(\mmid\mcap\,\mcap)\circ(\mcup\overline{X}\mmid)\circ(\mmid\mcup) \\
&\hspace{5mm}=(\mmid\mcap)\circ(\mmid\mmid\mcap\mmid)\circ(\mmid X\mmid\mmid)
\circ(\mcup X\mmid)\circ(\overline{X}\mmid)\circ(\mmid\mcup) \\
&\hspace{5mm}=(\mmid\mcap)\circ(\mmid\mmid\mcap\mmid)\circ(\mmid X\mcup)\circ(\mcup\mmid)
=(\mmid\mcap)\circ(\mmid X)\circ(\mcup\mmid) \\
&\hspace{5mm}=(\mmid\mcap)\circ(\mcup\mmid)=\mmid, \\
& \mwedge=\mwedge\circ X\circ\overline{X}=\mwedge\circ\overline{X}, \\
& (\mwedge\,\mcap)\circ(\mmid\mcup\mmid)
=\mwedge
=(\mcap\mmid)\circ(\mwedge\,\mcup)
=(\mcap\mmid)\circ(\mmid\mwedge\mmid)\circ(\mmid\mmid\mcup).
\end{align*}
\end{proof}

The following is now easy to see.

\begin{proposition} \label{prop:Tbraided}
The tensor category $\mathcal{T}$ is braided with respect to the
braiding depicted in Figure~\ref{fig:braiding}.
\end{proposition}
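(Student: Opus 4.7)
The plan is to define the braiding $c_{m,n}\colon m\otimes n \to n\otimes m$ on all objects by iterated application of the generator $X$, and then deduce the braided-category axioms from the presentation given in Proposition~\ref{prop:old-relations}. Concretely, I would set $c_{1,1} := X$ and $c_{0,n}=c_{n,0}:=\operatorname{id}_n$, and define $c_{m,n}$ inductively by, for example,
\begin{align*}
c_{m+1,n} &:= (c_{1,n}\otimes\operatorname{id}_m)\circ(\operatorname{id}_1\otimes c_{m,n}),\\
c_{m,n+1} &:= (\operatorname{id}_n\otimes c_{m,1})\circ(c_{m,n}\otimes\operatorname{id}_1),
\end{align*}
where $c_{1,n}$ and $c_{m,1}$ are produced by iterating $X$; pictorially this is the permutation braid moving an $m$-bundle past an $n$-bundle, which is precisely the braiding depicted in the figure. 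A first technical task is to show that the two inductive recipes agree; this is an induction on $m+n$ using the braid relation \eqref{eq:old7}, and it simultaneously yields the two hexagon identities $c_{\ell+m,n}=(c_{\ell,n}\otimes\operatorname{id}_m)\circ(\operatorname{id}_\ell\otimes c_{m,n})$ and the analogous identity in the second variable.

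For naturality, Proposition~\ref{prop:old-relations} reduces the task to checking that $c$ is natural with respect to each of the six generators, and by the hexagon identities it further suffices to verify naturality against a single strand. The relevant one-strand identities appear directly in the presentation: for the generator $X$ the required equation is the braid relation \eqref{eq:old7}; for $\overline{X}$ it follows from combining \eqref{eq:old6} with \eqref{eq:old7}; for $\mcap$ and $\mcup$ the required equalities are \eqref{eq:old2}, \eqref{eq:old3}, and \eqref{eq:old5}; and for the trivalent vertices $\mwedge$ and $\mvee$ they are \eqref{eq:old9} and \eqref{eq:old10}, the case of $\mvee$ being reducible to that of $\mwedge$ via \eqref{eq:old4}. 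Invertibility of each $c_{m,n}$ is then obtained by assembling an inverse from $\overline{X}$ by the same recipe and applying \eqref{eq:old6} strand by strand, so $c$ is a genuine braiding.

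I expect the main obstacle to be not any single identity but the bookkeeping: one must organize the induction so that the well-definedness of $c_{m,n}$, the two hexagon identities, and the reduction of naturality to the one-strand case interlock without circularity. My preferred route would be to prove by simultaneous induction on $m+n$ that the two defining recipes for $c_{m,n}$ agree and that both hexagons hold; once this is in place, naturality is verified generator by generator using the correspondences above, and the statement of the proposition follows immediately.
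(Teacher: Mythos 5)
Your proposal is correct and is precisely the argument the paper leaves implicit: the paper offers no proof beyond ``The following is now easy to see,'' relying on the presentation of $\mathcal{T}$ just established, and your plan---define $c_{m,n}$ by iterating $X$, prove well-definedness and the hexagons by induction via \eqref{eq:old7}, and check naturality generator by generator against \eqref{eq:old2}, \eqref{eq:old3}, \eqref{eq:old6}, \eqref{eq:old9}, \eqref{eq:old10} (with $\curlyvee$ reduced through \eqref{eq:old4})---is the standard way to make that explicit. The only slight imprecision is attributing the $\cap$, $\cup$ naturality partly to \eqref{eq:old5}, which is a kink-removal rather than a naturality relation; the relevant identities are \eqref{eq:new3}, \eqref{eq:new4} and their $\cup$-analogues, all derivable from \eqref{eq:old1}--\eqref{eq:old3} and \eqref{eq:old6}.
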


\begin{figure}
\mbox{}\hfill
\begin{picture}(150,140)
\qbezier(10,130)(15,126)(20,130)
\qbezier(10,130)(15,134)(20,130)
\qbezier(50,130)(55,126)(60,130)
\qbezier(50,130)(55,134)(60,130)
\put(42.5,115){\makebox(0,0){$\cdots$}}
\qbezier(80,130)(85,126)(90,130)
\qbezier(80,130)(85,134)(90,130)
\qbezier(105,130)(110,126)(115,130)
\qbezier(105,130)(110,134)(115,130)
\put(121,115){\makebox(0,0){$\cdots$}}
\qbezier(140,130)(145,126)(150,130)
\qbezier(140,130)(145,134)(150,130)
\qbezier(0,30)(0,42)(39,80)
\qbezier(10,30)(10,40)(51,80)
\qbezier(39,80)(80,120)(80,130)
\qbezier(51,80)(90,118)(90,130)
\qbezier(25,30)(25,42)(64,80)
\qbezier(35,30)(35,40)(76,80)
\qbezier(64,80)(105,120)(105,130)
\qbezier(76,80)(115,118)(115,130)
\qbezier(60,30)(60,42)(99,80)
\qbezier(70,30)(70,40)(111,80)
\qbezier(99,80)(140,120)(140,130)
\qbezier(111,80)(150,118)(150,130)
%
%
\qbezier(100,30)(100,37)(86,54)
\qbezier(90,30)(90,35)(80,47)
\qbezier(80,60.5)(74.5,66.5)(68.5,72.5)
\qbezier(74,54)(68.5,60.5)(62.5,66.5)
\qbezier(62.5,78.5)(59,82)(56,85)
\qbezier(56.5,72.5)(53,76)(50,79)
\qbezier(50,91)(20,121)(20,130)
\qbezier(44,85)(10,119)(10,130)
\qbezier(140,30)(140,41)(106,75)
\qbezier(130,30)(130,39)(100,69)
\qbezier(100,81)(94,87)(88.5,92.5)
\qbezier(94,75)(88,81)(82.5,86.5)
\qbezier(82.5,99)(79,102.5)(76,106)
\qbezier(76.5,92.5)(73,96)(70,99.5)
\qbezier(70,113)(60,125)(60,130)
\qbezier(64,106)(50,123)(50,130)
\qbezier(0,30)(5,26)(10,30)
\qbezier[6](0,30)(5,34)(10,30)
\qbezier(25,30)(30,26)(35,30)
\qbezier[6](25,30)(30,34)(35,30)
\put(56,45){\makebox(0,0){$\cdots$}}
\qbezier(60,30)(65,26)(70,30)
\qbezier[6](60,30)(65,34)(70,30)
\qbezier(90,30)(95,26)(100,30)
\qbezier[6](90,30)(95,34)(100,30)
\put(107.5,45){\makebox(0,0){$\cdots$}}
\qbezier(130,30)(135,26)(140,30)
\qbezier[6](130,30)(135,34)(140,30)
\put(35,10){\makebox(0,0){$\underbrace{\hspace{70pt}}_m$}}
\put(115,10){\makebox(0,0){$\underbrace{\hspace{50pt}}_n$}}
\end{picture}
\hfill\mbox{}
\caption{}
\label{fig:braiding}
\end{figure}

\section{Quantum-commutative quantum-symmetric algebras}
\label{sec:q-symmetric-alg}

In this section, $\mathcal{B}=(\mathcal{B},\otimes,I)$
denotes a braided tensor category unless otherwise stated.
The braiding and its inverse will be denoted by, and depicted as
\begin{equation*}
c_{V,W}:V\otimes W\overset{\simeq}{\longrightarrow}W\otimes V;
\begin{minipage}{40pt}
\begin{picture}(40,50)
 \qbezier(10,15)(20,25)(30,35)
 \qbezier(10,35)(14,31)(18,27)
 \qbezier(30,15)(26,19)(22,23)
 \put(10,43){\makebox(0,0){\small$W$}}
 \put(30,43){\makebox(0,0){\small$V$}}
 \put(10,7){\makebox(0,0){\small$V$}}
 \put(30,7){\makebox(0,0){\small$W$}}
\end{picture}
\end{minipage}
\end{equation*}
\begin{equation*}
c_{W,V}^{-1}:W\otimes V\overset{\simeq}{\longrightarrow}V\otimes W;
\begin{minipage}{40pt}
\begin{picture}(40,50)
 \qbezier(30,15)(20,25)(10,35)
 \qbezier(30,35)(26,31)(22,27)
 \qbezier(10,15)(14,19)(18,23)
 \put(10,43){\makebox(0,0){\small$V$}}
 \put(30,43){\makebox(0,0){\small$W$}}
 \put(10,7){\makebox(0,0){\small$W$}}
 \put(30,7){\makebox(0,0){\small$V$}}
\end{picture}
\end{minipage}
\end{equation*}
where $V,W\in\mathrm{Ob}(\mathcal{B})$.
Suppose that $A$ is a (non-unital) algebra in $\mathcal{B}$.
Thus, $A$ is equipped with a morphism in $\mathcal{B}$
\begin{equation*}
m_A:A\otimes A\to A;
\begin{minipage}{40pt}
\begin{picture}(40,50)
 \qbezier(10,15)(15,20)(20,25)
 \qbezier(30,15)(25,20)(20,25)
 \qbezier(20,25)(20,30)(20,35)
 \put(20,43){\makebox(0,0){\small$A$}}
 \put(10,7){\makebox(0,0){\small$A$}}
 \put(30,7){\makebox(0,0){\small$A$}}
\end{picture}
\end{minipage}
\end{equation*}
which satisfies the associativity
\[ \begin{minipage}{60pt}
\begin{picture}(60,60)
 \qbezier(10,15)(20,25)(30,35)
 \qbezier(50,15)(40,25)(30,35)
 \qbezier(30,15)(25,20)(20,25)
 \qbezier(30,35)(30,40)(30,45)
 \put(30,53){\makebox(0,0){\small$A$}}
 \put(10,7){\makebox(0,0){\small$A$}}
 \put(30,7){\makebox(0,0){\small$A$}}
 \put(50,7){\makebox(0,0){\small$A$}}
\end{picture}
\end{minipage}
=\begin{minipage}{60pt}
\begin{picture}(60,60)
 \qbezier(10,15)(20,25)(30,35)
 \qbezier(50,15)(40,25)(30,35)
 \qbezier(30,15)(35,20)(40,25)
 \qbezier(30,35)(30,40)(30,45)
 \put(30,53){\makebox(0,0){\small$A$}}
 \put(10,7){\makebox(0,0){\small$A$}}
 \put(30,7){\makebox(0,0){\small$A$}}
 \put(50,7){\makebox(0,0){\small$A$}}
\end{picture}
\end{minipage}~. \]
Here and in what follows we omit the associativity constraint in
diagrams.
\begin{definition}\label{def:q-symmetric-alg}
A (non-unital) algebra $A$ in $\mathcal{B}$, equipped with two morphisms
\begin{equation*}
\mathrm{ev}_A:A\otimes A\to I;
\begin{minipage}{40pt}
\begin{picture}(40,30)
 \qbezier(10,15)(10,25)(20,25)
 \qbezier(30,15)(30,25)(20,25)
 \put(10,7){\makebox(0,0){\small$A$}}
 \put(30,7){\makebox(0,0){\small$A$}}
\end{picture}
\end{minipage}
\end{equation*}
\begin{equation*}
\mathrm{coev}_A:I\to A\otimes A;
\begin{minipage}{40pt}
\begin{picture}(40,30)
 \qbezier(10,15)(10,5)(20,5)
 \qbezier(30,15)(30,5)(20,5)
 \put(10,23){\makebox(0,0){\small$A$}}
 \put(30,23){\makebox(0,0){\small$A$}}
\end{picture}
\end{minipage}
\end{equation*}
is called a \emph{quantum-commutative quantum-symmetric algebra}
if it satisfies the selfduality~\eqref{eq:selfduality},
the Frobenius property~\eqref{eq:Frobenius},
the quantum-commutativity~\eqref{eq:q-commutativity}
and the quantum-symmetry~\eqref{eq:q-symmetry}, given below.
\begin{align}
&\begin{minipage}{60pt}
\begin{picture}(60,70)
 \qbezier(10,35)(10,45)(20,45)
 \qbezier(30,35)(30,45)(20,45)
 \qbezier(50,35)(50,45)(50,55)
 \qbezier(10,15)(10,25)(10,35)
 \qbezier(30,35)(30,25)(40,25)
 \qbezier(50,35)(50,25)(40,25)
 \put(50,63){\makebox(0,0){\small$A$}}
 \put(10,7){\makebox(0,0){\small$A$}}
\end{picture}
\end{minipage}
=\begin{minipage}{60pt}
\begin{picture}(60,70)
 \qbezier(10,35)(10,45)(10,55)
 \qbezier(30,35)(30,45)(40,45)
 \qbezier(50,35)(50,45)(40,45)
 \qbezier(10,35)(10,25)(20,25)
 \qbezier(30,35)(30,25)(20,25)
 \qbezier(50,15)(50,25)(50,35)
 \put(10,63){\makebox(0,0){\small$A$}}
 \put(50,7){\makebox(0,0){\small$A$}}
\end{picture}
\end{minipage}
=\begin{minipage}{20pt}
\begin{picture}(20,70)
 \qbezier(10,15)(10,35)(10,55)
 \put(10,63){\makebox(0,0){\small$A$}}
 \put(10,7){\makebox(0,0){\small$A$}}
\end{picture}
\end{minipage}~\text{, the identity map}
\label{eq:selfduality} \\
&\begin{minipage}{60pt}
\begin{picture}(60,50)
 \qbezier(10,15)(15,20)(20,25)
 \qbezier(30,15)(25,20)(20,25)
 \qbezier(50,15)(50,20)(50,25)
 \qbezier(20,25)(20,40)(35,40)
 \qbezier(50,25)(50,40)(35,40)
 \put(10,7){\makebox(0,0){\small$A$}}
 \put(30,7){\makebox(0,0){\small$A$}}
 \put(50,7){\makebox(0,0){\small$A$}}
\end{picture}
\end{minipage}
=\begin{minipage}{60pt}
\begin{picture}(60,50)
 \qbezier(10,15)(10,20)(10,25)
 \qbezier(30,15)(35,20)(40,25)
 \qbezier(50,15)(45,20)(40,25)
 \qbezier(10,25)(10,40)(25,40)
 \qbezier(40,25)(40,40)(25,40)
 \put(10,7){\makebox(0,0){\small$A$}}
 \put(30,7){\makebox(0,0){\small$A$}}
 \put(50,7){\makebox(0,0){\small$A$}}
\end{picture}
\end{minipage}
\label{eq:Frobenius} \\
&\begin{minipage}{40pt}
\begin{picture}(40,70)
 \qbezier(10,35)(15,40)(20,45)
 \qbezier(30,35)(25,40)(20,45)
 \qbezier(20,45)(20,50)(20,55)
 \qbezier(10,15)(20,25)(30,35)
 \qbezier(10,35)(14,31)(18,27)
 \qbezier(30,15)(26,19)(22,23)
 \put(20,63){\makebox(0,0){\small$A$}}
 \put(10,7){\makebox(0,0){\small$A$}}
 \put(30,7){\makebox(0,0){\small$A$}}
\end{picture}
\end{minipage}
=\begin{minipage}{40pt}
\begin{picture}(40,70)
 \qbezier(20,40)(20,47.5)(20,55)
 \qbezier(10,30)(15,35)(20,40)
 \qbezier(30,30)(25,35)(20,40)
 \qbezier(10,15)(10,22.5)(10,30)
 \qbezier(30,15)(30,22.5)(30,30)
 \put(20,63){\makebox(0,0){\small$A$}}
 \put(10,7){\makebox(0,0){\small$A$}}
 \put(30,7){\makebox(0,0){\small$A$}}
\end{picture}
\end{minipage}
\label{eq:q-commutativity} \\
&\begin{minipage}{40pt}
\begin{picture}(40,55)
 \qbezier(10,35)(10,45)(20,45)
 \qbezier(30,35)(30,45)(20,45)
 \qbezier(10,15)(20,25)(30,35)
 \qbezier(10,35)(14,31)(18,27)
 \qbezier(30,15)(26,19)(22,23)
 \put(10,7){\makebox(0,0){\small$A$}}
 \put(30,7){\makebox(0,0){\small$A$}}
\end{picture}
\end{minipage}
=\begin{minipage}{40pt}
\begin{picture}(40,55)
 \qbezier(10,30)(10,40)(20,40)
 \qbezier(30,30)(30,40)(20,40)
 \qbezier(10,15)(10,22.5)(10,30)
 \qbezier(30,15)(30,22.5)(30,30)
 \put(10,7){\makebox(0,0){\small$A$}}
 \put(30,7){\makebox(0,0){\small$A$}}
\end{picture}
\end{minipage}
\label{eq:q-symmetry}
\end{align}
These coincide respectively with \eqref{eq:old1}, \eqref{eq:new1},
\eqref{eq:new5}, \eqref{eq:new2} given before,
if the $\curlywedge,\ \cap,\ \cup$ and $X$ before read
$m_A,\mathrm{ev}_A,\mathrm{coev}_A$ and $c_{A,A}$, respectively.
\end{definition}

\begin{proposition}\label{prop:q-symmetric-alg}
\begin{itemize}
\item[(1)]
In the braided tensor category $\mathcal{T}$ of handlebody-tangles, the
object $1$ equipped with $\curlywedge,\ \cap,\ \cup$ is a
quantum-commutative quantum-symmetric algebra.
\item[(2)]
If $F:\mathcal{T}\to\mathcal{B}$ is a braided tensor functor, then the
object $F(1)$, together with the morphisms which are given by
$F(\curlywedge)$, $F(\cap)$, $F(\cup)$ composed with the tensor
structure $I\simeq F(0)$, $F(1)\otimes F(1)\simeq F(2)$ of $F$, forms a
quantum-commutative quantum-symmetric algebra in $\mathcal{B}$.
\end{itemize}
\end{proposition}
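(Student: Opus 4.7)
My plan is to treat both parts essentially as a bookkeeping exercise, since the bulk of the work has already been done in establishing Proposition~\ref{prop:new-relations} and in the observations recorded at the end of Definition~\ref{def:q-symmetric-alg}.

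For part (1), I would start from the dictionary
\[
\mwedge \longleftrightarrow m_A, \quad
\mcap \longleftrightarrow \mathrm{ev}_A, \quad
\mcup \longleftrightarrow \mathrm{coev}_A, \quad
X \longleftrightarrow c_{A,A}
\]
applied to the object $A = 1 \in \mathrm{Ob}(\mathcal{T})$. The final remark in Definition~\ref{def:q-symmetric-alg} already identifies selfduality, Frobenius, quantum-commutativity, and quantum-symmetry with the relations \eqref{eq:old1}, \eqref{eq:new1}, \eqref{eq:new5}, \eqref{eq:new2}, respectively. What remains is the associativity of $\mwedge$, which is precisely \eqref{eq:old11}. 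Since all of \eqref{eq:old1}, \eqref{eq:new1}, \eqref{eq:new5}, \eqref{eq:new2}, \eqref{eq:old11} hold in $\mathcal{T}$ by Proposition~\ref{prop:new-relations}, part (1) is established with essentially no calculation: it is a direct reading of the presentation of $\mathcal{T}$ against the definition of quantum-commutative quantum-symmetric algebra.

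For part (2), I would apply $F$ to the identities verified in (1). Because $F$ is a braided tensor functor, it comes with coherent structure isomorphisms $\varphi_0 \colon I \xrightarrow{\simeq} F(0)$ and $\varphi_{m,n} \colon F(m)\otimes F(n) \xrightarrow{\simeq} F(m+n)$, and it carries the braiding of $\mathcal{T}$ (described in Proposition~\ref{prop:Tbraided}, whose value on $1\otimes 1$ is $X$) to the braiding of $\mathcal{B}$. I define
\[
m_{F(1)} := F(\mwedge)\circ \varphi_{1,1}^{-1}, \qquad
\mathrm{ev}_{F(1)} := \varphi_0^{-1}\circ F(\mcap)\circ \varphi_{1,1}^{-1}, \qquad
\mathrm{coev}_{F(1)} := \varphi_{1,1}\circ F(\mcup)\circ \varphi_0,
\]
and observe that $F(X)$, suitably conjugated by the $\varphi$'s, is $c_{F(1),F(1)}$. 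The coherence (hexagon and monoidal) axioms of a braided tensor functor then transport each of the diagrammatic identities of part (1) to the corresponding identity in $\mathcal{B}$: associativity, selfduality~\eqref{eq:selfduality}, Frobenius~\eqref{eq:Frobenius}, quantum-commutativity~\eqref{eq:q-commutativity}, quantum-symmetry~\eqref{eq:q-symmetry}.

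The only point that is not entirely automatic is this last transport, i.e.\ checking that the $\varphi$-isomorphisms really do cancel out in exactly the right places so that each relation in $\mathcal{T}$ yields its $\mathcal{B}$-counterpart without extra terms. This is the standard `strictification' argument for (braided) tensor functors, and in a strict target category it is immediate; in the general case it is an exercise in tracking associators and the monoidal coherence of $\varphi$. I do not expect any genuine obstacle here, only diagram-chasing.
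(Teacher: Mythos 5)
Your proposal is correct and follows essentially the same route as the paper: part (1) is read off directly from Proposition~\ref{prop:new-relations} together with the dictionary recorded after Definition~\ref{def:q-symmetric-alg}, and part (2) is the standard transport of a tensor-categorically defined structure along a braided tensor functor, which the paper likewise invokes without further detail. The only difference is that you spell out the bookkeeping the paper leaves implicit.
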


\begin{proof}
\begin{itemize}
\item[(1)]
This follows by Proposition~\ref{prop:new-relations}.
\item[(2)]
This follows by a standard argument which depends on the fact that the
notion of quantum-commutative quantum-symmetric algebras is
tensor-categorical.
\end{itemize}
\end{proof}

\begin{proposition}\label{prop:F_A1}
Assume that $\mathcal{B}$ is strict as a tensor category.
Given a quantum-commutative quantum-symmetric algebra
$A=(A,m_A,\mathrm{ev}_A,\mathrm{coev}_A)$ in $\mathcal{B}$,
there uniquely exists a braided strict tensor functor
$F_A:\mathcal{T}\to\mathcal{B}$ such that
\begin{equation} \label{eq:braided-functor}
F_A(1) = A, ~
F_A(\curlywedge)=m_A, ~
F_A(\cap)=\mathrm{ev}_A, ~
F_A(\cup) = \mathrm{coev}_A.
\end{equation}
\end{proposition}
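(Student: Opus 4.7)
The plan is to apply the presentation of $\mathcal{T}$ given by Proposition~\ref{prop:new-relations}. On objects I set $F_A(n) := A^{\otimes n}$, which is unambiguous since $\mathcal{B}$ is strict. For a braided strict tensor functor, the values on the two generators not covered by~\eqref{eq:braided-functor} are forced: $F_A(X) = c_{A,A}$ (by the braided structure) and $F_A(\overline X) = c_{A,A}^{-1}$ (so as to be consistent with the relation~\eqref{eq:old6}). Uniqueness is then immediate, since every morphism of $\mathcal{T}$ is built from these five generators by tensoring and composing. For existence it suffices, by the universal property of strict tensor categories presented by generators and relations (cf.~\cite[Chapter XII]{Kassel95}), to verify that $F_A$ sends each of the eleven relations of Proposition~\ref{prop:new-relations} to an identity in $\mathcal{B}$.

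Seven of these are essentially for free. The four relations~\eqref{eq:old1}, \eqref{eq:new1}, \eqref{eq:new5} and~\eqref{eq:new2} translate verbatim into the selfduality~\eqref{eq:selfduality}, Frobenius property~\eqref{eq:Frobenius}, quantum-commutativity~\eqref{eq:q-commutativity} and quantum-symmetry~\eqref{eq:q-symmetry} of Definition~\ref{def:q-symmetric-alg}, as that definition explicitly notes, and hence hold by hypothesis on $A$. The associativity relation~\eqref{eq:old11} is the associativity of $m_A$, built into the assumption that $A$ is an algebra in $\mathcal{B}$. The relation~\eqref{eq:old6} becomes the tautology $c_{A,A}\circ c_{A,A}^{-1} = \mathrm{id}_{A\otimes A}$, and~\eqref{eq:old7} becomes the Yang--Baxter identity for $c_{A,A}$, a standard consequence of the hexagon axioms in any braided tensor category.

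The hard part, such as it is, lies in the remaining four ``naturality'' relations~\eqref{eq:old9}, \eqref{eq:old10}, \eqref{eq:new3}, \eqref{eq:new4}. Each of them is an instance of the naturality of the braiding $c$ applied either to $m_A$ or to $\mathrm{ev}_A$, combined with one of the two hexagon identities. For~\eqref{eq:old9}, for example, the hexagon yields $(\mathrm{id}_A\otimes c_{A,A})\circ(c_{A,A}\otimes\mathrm{id}_A) = c_{A,A\otimes A}$, and then naturality of $c$ at $m_A\colon A\otimes A\to A$ gives $(m_A\otimes\mathrm{id}_A)\circ c_{A,A\otimes A} = c_{A,A}\circ(\mathrm{id}_A\otimes m_A)$, which is precisely the desired identity between the images of the two sides of~\eqref{eq:old9}. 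The treatments of~\eqref{eq:old10}, \eqref{eq:new3} and~\eqref{eq:new4} are entirely analogous, the last two additionally using that $c_{A,I}$ and $c_{I,A}$ are identity maps since $I$ is the unit. The main obstacle is really just the bookkeeping of pairing each of the eleven relations with the correct axiom or general categorical fact; once that is done, each individual verification reduces to one or two lines.
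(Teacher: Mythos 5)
Your proposal is correct and follows essentially the same route as the paper: both use the economical presentation of Proposition~\ref{prop:new-relations}, define $F_A$ on objects by tensor powers of $A$, check the eleven relations by matching four of them with the axioms of Definition~\ref{def:q-symmetric-alg}, one with associativity, one with invertibility of the braiding, and the rest with the Yang--Baxter/naturality/hexagon properties of $c_{A,A}$, and then invoke the universal property of a presented strict tensor category from \cite[Chapter XII]{Kassel95}. You merely spell out the naturality-plus-hexagon verifications that the paper dispatches with ``since $c_{A,A}$ is a braiding.''
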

\begin{proof}
Define objects $A^{\otimes n} \in \mathrm{Ob}(\mathcal{B})$ by
\begin{equation}\label{eq:tensor-power}
A^{\otimes 0}=I, ~
A^{\otimes 1}=A, ~
A^{\otimes n}=A^{\otimes(n-1)}\otimes A ~(n>1),
\end{equation}
and define a map $F:\mathrm{Ob}(\mathcal{T})\to\mathrm{Ob}(\mathcal{B})$
by
\begin{equation}\label{eq:FonOb}
F(n)=A^{\otimes n}.
\end{equation}
Then this $F$ strictly preserves the tensor product.

Let us see that the relations in Proposition~\ref{prop:new-relations}
with $X$, $\overline{X}$, $\curlywedge$, $\cap$, $\cup$ replaced by
$c_{A,A}$, $c_{A,A}^{-1}$, $m_A$, $\mathrm{ev}_A$, $\mathrm{coev}_A$ are satisfied.
Since $c_{A,A}$ is a braiding, the relations \eqref{eq:old7},
\eqref{eq:old9}, \eqref{eq:old10}, \eqref{eq:new3} and \eqref{eq:new4}
are satisfied; cf. Remark~\ref{rem:braided-alg} below.
The rest is satisfied by the coincidence noted below
\eqref{eq:selfduality}--\eqref{eq:q-symmetry} and
since $c_{A,A}$ and $c_{A,A}^{-1}$ are inverses of each other.

It follows by \cite[Proposition XII.1.4]{Kassel95} that the map $F$
defined above gives rise uniquely to a strict tensor functor
$F_A:\mathcal{T}\to\mathcal{B}$ which satisfies the equalities in
\eqref{eq:braided-functor} as well as $F_A(X)=c_{A,A}$,
$F_A(\overline{X})=c_{A,A}^{-1}$.
One sees easily that this $F_A$ preserves the braiding, and is indeed a
unique functor such as described above.
\end{proof} 

\begin{remark} \label{rem:braided-alg}
Assume that $\mathcal{B}$ is strict, but is not necessarily braided.
As was essentially shown above, if we have a (non-unital) algebra $A$ in $\mathcal{B}$
equipped with morphisms $c_{A,A},c_{A,A}^{-1},\mathrm{ev}_A,\mathrm{coev}_A$
which satisfy the relations in Proposition~\ref{prop:new-relations} except \eqref{eq:old11},
there uniquely exists a strict tensor functor $F_A:\mathcal{T}\to\mathcal{B}$
which satisfies the equalities in \eqref{eq:braided-functor}
as well as $F_A(X)=c_{A,A}$, $F_A(\overline{X})=c_{A,A}^{-1}$.
In fact, it is essentially from such functors that Ishihara and the
first author~\cite{IshiharaIshii12} constructed invariants of
handlebody-knots.
However, we choose, in this paper, to work with braided tensor categories,
because the algebras of our main concern sit in such a category.
\end{remark}

Suppose that $\mathcal{B}$ is braided.
To modify the proposition above when $\mathcal{B}$ is not necessarily
strict, let us say that a tensor functor $F:\mathcal{T}\to\mathcal{B}$
is \emph{almost strict}, if it satisfies the following three conditions:
(i) $F(0)=I$,
(ii) if we set $A=F(1)$, then for each $n>1$, $F(n)=A^{\otimes n}$,
where $A^{\otimes n}$ is defined by \eqref{eq:tensor-power}, and
(iii) the tensor structure $\varphi_0$, $\varphi_2$ of $F$ is as
follows,
\begin{itemize}
\item[(a)]
$\varphi_0:I\overset{\simeq}{\longrightarrow}F(0)$ is the identity $\mathrm{id}_I$;
\item[(b)]
for every $n,m>0$,
$\varphi_2(n,m):F(n)\otimes F(m)\overset{\simeq}{\longrightarrow}F(n+m)$
is \emph{canonical} in the sense that it is built from the associativity
constraint
$a_{A,A,A}:(A\otimes A)\otimes A\overset{\simeq}{\longrightarrow}A\otimes(A\otimes A)$
and the identity $\mathrm{id}_A$ by composing and tensoring; such an
isomorphism is unique by MacLane's coherence theorem.
\end{itemize}
One sees from (a) that if $n=0$ or $m=0$, then $\varphi_2(n,m)$ must
coincide with the left or right unit constraint
$l_{A^{\otimes m}}:I\otimes A^{\otimes m}\overset{\simeq}{\longrightarrow}A^{\otimes m}$,
$r_{A^{\otimes n}}:A^{\otimes n}\otimes I\overset{\simeq}{\longrightarrow}A^{\otimes n}$.

\begin{proposition}\label{prop:F_A2}
Given a quantum-commutative quantum-symmetric algebra
$A=(A,m_A, \mathrm{ev}_A, \mathrm{coev}_A)$ in $\mathcal{B}$, there
uniquely exists a braided, almost strict tensor functor
$F_A:\mathcal{T}\to\mathcal{B}$ which satisfies the equalities given in
\eqref{eq:braided-functor}.
\end{proposition}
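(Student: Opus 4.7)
The plan is to reduce to the strict case already established in Proposition~\ref{prop:F_A1} via Mac~Lane's strictification theorem. By that theorem, there exist a strict braided tensor category $\mathcal{B}^{\mathrm{st}}$ and a pair of quasi-inverse braided tensor equivalences $S:\mathcal{B}\to\mathcal{B}^{\mathrm{st}}$, $T:\mathcal{B}^{\mathrm{st}}\to\mathcal{B}$. Since the notion of quantum-commutative quantum-symmetric algebra is tensor-categorical (as was already used in Proposition~\ref{prop:q-symmetric-alg}(2)), $S$ transports $A$ into such an algebra $S(A)$ in $\mathcal{B}^{\mathrm{st}}$, and Proposition~\ref{prop:F_A1} then yields a braided strict tensor functor $G:\mathcal{T}\to\mathcal{B}^{\mathrm{st}}$ with $G(1)=S(A)$, $G(\curlywedge)=m_{S(A)}$, $G(\cap)=\mathrm{ev}_{S(A)}$ and $G(\cup)=\mathrm{coev}_{S(A)}$.

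Next I would modify $T\circ G:\mathcal{T}\to\mathcal{B}$ into an almost strict functor. Using the tensor structure of $T$ together with the chosen isomorphism $TS(A)\simeq A$, Mac~Lane's coherence theorem supplies, for each $n\ge 0$, a canonical isomorphism $\alpha_n:(T\circ G)(n)\overset{\simeq}{\longrightarrow}A^{\otimes n}$, with $\alpha_0=\mathrm{id}_I$ and $\alpha_1$ the given identification. Define $F_A(n):=A^{\otimes n}$ on objects and, for a morphism $\tau$ of $\mathcal{T}$, put $F_A(\tau):=\alpha_{b(\tau)}\circ(T\circ G)(\tau)\circ\alpha_{s(\tau)}^{-1}$. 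Functoriality is immediate from that of $T\circ G$; the tensor structure $\varphi_2(n,m)$ of $F_A$ transported through the $\alpha_\bullet$ is canonical in the sense of condition~(iii) by coherence; and braidedness together with the identities \eqref{eq:braided-functor} follow from the corresponding properties of $G$ and the naturality of $\alpha_\bullet$.

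For uniqueness, suppose $F'_A$ is any braided, almost strict tensor functor satisfying \eqref{eq:braided-functor}. It agrees with $F_A$ on objects by condition~(ii), on the three generators $\cap,\cup,\curlywedge$ by \eqref{eq:braided-functor}, and on $X,\overline{X}$ because $F'_A(X)=c_{A,A}$ (preservation of the braiding) and $F'_A(\overline{X})=c_{A,A}^{-1}$ (via the relation $X\circ\overline{X}=\mmid\mmid$). Since its tensor structure is likewise canonical and since every morphism of $\mathcal{T}$ is a composite of tensor products of these generators by Proposition~\ref{prop:new-relations}, coherence forces $F'_A=F_A$. The main obstacle I anticipate is the bookkeeping of coherence isomorphisms needed to convert $T\circ G$ into an almost strict functor; however, Mac~Lane's coherence theorem guarantees that all of the intervening diagrams involving associativity and unit constraints commute automatically, so no independent combinatorial verification will be required.
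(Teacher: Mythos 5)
Your proposal is correct and takes essentially the same route as the paper: strictify the target category, apply the strict-case result (Proposition~\ref{prop:F_A1}, or equivalently Remark~\ref{rem:braided-alg}) there, and transport the conclusion back to $\mathcal{B}$. The paper merely streamlines the return trip by using the particular strictification of \cite[Sect.~XI5]{Kassel95}, in which $G:\mathcal{B}\to\mathcal{B}^{str}$ is itself a strict tensor equivalence sending each canonical isomorphism $A^{\otimes n}\otimes A^{\otimes m}\simeq A^{\otimes(n+m)}$ to an identity, so that existence and uniqueness transfer in one step without the explicit conjugation by the coherence isomorphisms $\alpha_n$ that you carry out.
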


\begin{proof}
For a tensor category $\mathcal{B}$ in general, a strict tensor category
$\mathcal{B}^{str}$ together with a strict tensor equivalence
$G:\mathcal{B}\to\mathcal{B}^{str}$ is constructed in~\cite[Sect. XI5]{Kassel95}.

Suppose we are in our special situation.
It follows by the construction of \cite[Sect.~XI5]{Kassel95} that for
every $n,m>0$, the unique canonical isomorphism
$A^{\otimes n}\otimes A^{\otimes m}\overset{\simeq}{\longrightarrow}A^{\otimes(n+m)}$
is sent by $G$ to the identity on $G(A)^{\otimes(n+m)}$.
We can apply Remark~\ref{rem:braided-alg} to
$(G(A),G(m_A),G(c_{A,A}^{\pm1}),G(\mathrm{ev}_A),G(\mathrm{coev}_A))$
in $\mathcal{B}^{str}$.
The result is translated via $G$ so that the same result as in the
remark holds true with ``a strict tensor functor $F_A$'' replaced with
``an almost strict tensor functor $F_A$.''
This implies the desired result.
\end{proof}

\begin{corollary} \label{cor:invariant}
Given a quantum-commutative quantum-symmetric algebra $A$ in a braided
tensor category, $F_A(H)$ gives an invariant for handlebody-links $H$,
which has values in the endomorphism monoid $\operatorname{End}(I)$.
\end{corollary}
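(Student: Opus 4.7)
The plan is a direct application of Proposition~\ref{prop:F_A2}. First, I would observe that, by the construction of $\mathcal{T}$ in Section~2, a handlebody-link is precisely a handlebody-tangle with no top or bottom end disks, so it represents a morphism in $\operatorname{End}_{\mathcal{T}}(0) = \operatorname{Hom}_{\mathcal{T}}(0,0)$. Moreover, equivalence of handlebody-links agrees with equality of morphisms in $\mathcal{T}$: morphisms of $\mathcal{T}$ are by definition isotopy classes in $I^3$ preserving the order of end disks, and in the absence of end disks this matches the notion of ambient isotopy equivalence in $S^3$ from Section~1, because any orientation-preserving self-homeomorphism of $S^3$ taking one handlebody-link onto another can be realized by an ambient isotopy supported in a sufficiently large cube containing the handlebodies.

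Having identified equivalence classes of handlebody-links with elements of $\operatorname{End}_{\mathcal{T}}(0)$, I would invoke the braided almost strict tensor functor $F_A : \mathcal{T} \to \mathcal{B}$ provided by Proposition~\ref{prop:F_A2}. Condition~(i) in the definition of almost strict gives $F_A(0) = I$, so $F_A(H) \in \operatorname{Hom}_{\mathcal{B}}(F_A(0),F_A(0)) = \operatorname{End}_{\mathcal{B}}(I)$, as asserted. Well-definedness of $H \mapsto F_A(H)$ on equivalence classes is then automatic from the fact that $F_A$ is a functor on $\mathcal{T}$ and equivalent handlebody-links are literally the same morphism in $\mathcal{T}$.

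Thus the substantive work for this corollary is already contained in the preceding propositions, and the present statement reduces to unpacking definitions. The only point requiring any care is the matching of the two notions of equivalence (isotopy in $S^3$ versus isotopy in $I^3$ preserving end-disk order with no end disks present); I expect this to be essentially the only obstacle, and it is a standard topological observation rather than a genuine difficulty.
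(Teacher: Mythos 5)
Your proposal is correct and takes the same route as the paper, whose entire proof is ``This is a direct consequence of Proposition~\ref{prop:F_A2}''; you have merely unpacked the definitional steps (handlebody-links as elements of $\operatorname{End}_{\mathcal{T}}(0)$, $F_A(0)=I$, functoriality giving well-definedness) that the paper leaves implicit.
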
 

\begin{proof}
This is a direct consequence of Proposition~\ref{prop:F_A2}.
\end{proof}

\section{Quantum-commutative quantum-symmetric algebras in Yetter--Drinfeld modules}
\label{sec:YDmodule}

We will show that every finite-dimensional unimodular Hopf algebra over
a field, regarded as an algebra in a braided tensor category of
Yetter--Drinfeld modules, is a quantum-commutative quantum-symmetric
algebra, which is unital.

In what follows we work over a fixed base field $k$; the tensor products
$\otimes$ denote those for vector spaces over $k$.
Let $A$ be a Hopf algebra with the coproduct
$\Delta:A\to A\otimes A$, the counit $\varepsilon:A\to k$ and the
antipode $S:A\to A$.
For $\Delta$, we will use the following variant of the Sweedler
notation~\cite[Sect. 1.2]{Sweedler69}:
\begin{align*}
&\Delta(a)=a_{(1)}\otimes a_{(2)}, &
&\Delta(a_{(1)})\otimes a_{(2)}
=a_{(1)}\otimes a_{(2)}\otimes a_{(3)}
=a_{(1)}\otimes\Delta(a_{(2)}).
\end{align*}
For our purpose we may and we do assume that $A$ is finite-dimensional.
Then $A$ includes the one-dimensional subspaces
\begin{align*}
I_l(A)&:=\{\Lambda\in A\,|\,a\Lambda=\varepsilon(a)\Lambda,~a\in A\}, \\
\text{resp.,~}
I_r(A)&:=\{\Lambda\in A\,|\,\Lambda a=\varepsilon(a)\Lambda,~a\in A\}
\end{align*}
consisting of all \textit{left} and resp., \textit{right integrals};
see~\cite[Corollary 5.1.6]{Sweedler69}.
It possibly happen that $I_l(A)\neq I_r(A)$.

\begin{definition}[{\cite[Definition 2.1.1]{Montgomery93}}]
\label{def:unimodular}
$A$ is said to be \textit{unimodular}, if $I_l(A)=I_r(A)$.
\end{definition}

The assumption $\dim A<\infty$ ensures that the antipode $S$ is
bijective; see~\cite[Corollary 5.1.6]{Sweedler69}.

Given a left $A$-comodule $V$, we will write its structure, say
$\rho:V\to A\otimes V$, explicitly so that
\[ \rho(v)=v_{(-1)}\otimes v_{(0)};~
\text{cf.~\cite[Sect. 2.0]{Sweedler69}}. \]

Let ${}_A^A\mathcal{YD}$ denote the $k$-linear abelian, braided tensor
category of \textit{Yetter--Drinfeld modules} over $A$;
see~\cite[Definition 10.6.10]{Montgomery93}.
Such a module is by definition a left $A$-module $V$ given a left
$A$-comodule structure $\rho:V\to A\otimes V$ which satisfies
\[ \rho(av)=a_{(1)}v_{(-1)}S(a_{(3)})\otimes a_{(2)}v_{(0)}, ~~~ a\in A,v\in V. \]
Morphisms in ${}_A^A\mathcal{YD}$ are $A$-linear and $A$-colinear maps.
In ${}_A^A\mathcal{YD}$, the tensor product, the unit object
(that is $k$), and the associativity and unit constraints are the
obvious ones, being the same as those for left (co)modules.
The braiding is defined by
\[ c_{V,W}:V\otimes W\overset{\simeq}{\longrightarrow}W\otimes V, ~~~
c_{V,W}(v\otimes w)=v_{(-1)}w\otimes v_{(0)}, \]
whose inverse is given by
\[ c_{V,W}^{-1}(w\otimes v)=v_{(0)}\otimes S^{-1}(v_{(-1)})w. \]

As is well known, the braided tensor category ${}_A^A\mathcal{YD}$ thus
defined is naturally identified with that of left modules over the
quantum double $D(A)$;
see~\cite[Sect.\, 10.6]{Montgomery93} or \cite[Sect.\, IX.5]{Kassel95}.
Though the latter category might be more familiar, the former is more
suitable for our purpose.
In the following section we will treat with the quantum double $D(kG)$
of a finite group algebra $kG$, as an example of the present $A$.
  
We regard $A$ as a left $A$-module with respect to the conjugate action
$\triangleright$ defined by
\[ a\triangleright b=a_{(1)}bS(a_{(2)}), ~~~ a,b\in A. \]
We regard $A$ as a left $A$-comodule with respect to the coproduct
$\Delta:A\to A\otimes A$.

\begin{lemma} \label{lem:YD-alg}
We have $(A,\triangleright,\Delta)\in{}_A^A\mathcal{YD}$.
Moreover, this $A$, equipped with the original algebra structure, turns
into a unital algebra in ${}_A^A\mathcal{YD}$ which satisfies the
quantum-commutativity~\eqref{eq:q-commutativity}.
\end{lemma}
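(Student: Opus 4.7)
The plan is to break the lemma into three self-contained verifications and dispatch each by direct Sweedler-index manipulation, relying only on the standard Hopf algebra identities: $\Delta$ an algebra map, $\Delta \circ S = \tau \circ (S \otimes S) \circ \Delta$, the antipode identities $S(a_{(1)}) a_{(2)} = a_{(1)} S(a_{(2)}) = \varepsilon(a) 1$, and the counit identity $\varepsilon(a_{(1)}) a_{(2)} = a_{(1)} \varepsilon(a_{(2)}) = a$.

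First I would check that $(A, \triangleright)$ is a left $A$-module and $(A, \Delta)$ is a left $A$-comodule; both are routine. To confirm the Yetter--Drinfeld compatibility I would expand $\rho(a \triangleright b) = \Delta(a_{(1)} b S(a_{(2)}))$ using multiplicativity of $\Delta$ together with $\Delta \circ S = \tau \circ (S \otimes S) \circ \Delta$, obtaining the four-fold Sweedler expression
\[
a_{(1)} b_{(1)} S(a_{(4)}) \otimes a_{(2)} b_{(2)} S(a_{(3)}).
\]
Reading the right-hand side $a_{(1)} b_{(-1)} S(a_{(3)}) \otimes a_{(2)} \cdot b_{(0)}$ of the YD axiom---and remembering that the second tensorand records the module action, here $\triangleright$, so that $a_{(2)} \cdot b_{(2)} = (a_{(2)})_{(1)} b_{(2)} S((a_{(2)})_{(2)})$---and then re-indexing by coassociativity produces the same expression. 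Conceptually this matching is the heart of the lemma; the only real obstacle is the careful bookkeeping of Sweedler indices when $\Delta$ is iterated.

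Next I would show that the multiplication and unit of $A$ are morphisms in ${}_A^A\mathcal{YD}$. The $A$-linearity of $m_A$ under the diagonal action is the direct computation
\[
m_A(a \triangleright (b \otimes c)) = a_{(1)} b S(a_{(2)}) a_{(3)} c S(a_{(4)}) = a_{(1)} b c S(a_{(2)}) = a \triangleright m_A(b \otimes c),
\]
using the antipode axiom to collapse $S(a_{(2)}) a_{(3)} = \varepsilon(a_{(2)}) 1$; the $A$-colinearity is the tautology $\Delta(bc) = \Delta(b)\Delta(c)$. The unit $k \to A$, $1 \mapsto 1_A$, is $A$-linear because $a \triangleright 1_A = \varepsilon(a) 1_A$ and $A$-colinear because $\Delta(1_A) = 1_A \otimes 1_A$. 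Associativity is inherited from $A$, so these data make $A$ a unital algebra in ${}_A^A\mathcal{YD}$.

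Finally, for quantum-commutativity I would compute $c_{A,A}(a \otimes b) = a_{(-1)} \triangleright b \otimes a_{(0)} = a_{(1)} b S(a_{(2)}) \otimes a_{(3)}$ and then
\[
m_A \circ c_{A,A}(a \otimes b) = a_{(1)} b S(a_{(2)}) a_{(3)} = a_{(1)} b \varepsilon(a_{(2)}) = ab,
\]
by one application of the antipode axiom followed by the counit identity. No further ingredient is needed.
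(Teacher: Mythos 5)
Your proposal is correct and follows exactly the route the paper intends: the paper's proof simply states ``this is directly verified; the quantum-commutativity follows from $(a_{(1)}\triangleright b)a_{(2)}=ab$,'' and your Sweedler-notation computations (the four-fold expansion for the Yetter--Drinfeld axiom, multiplicativity of $\Delta$ for colinearity of $m_A$, and the collapse $a_{(1)}bS(a_{(2)})a_{(3)}=a_{(1)}b\varepsilon(a_{(2)})=ab$ for quantum-commutativity) are precisely the omitted direct verification.
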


\begin{proof}
This is directly verified;
the quantum-commutativity follows from $(a_{(1)}\triangleright b)a_{(2)}=ab$.
\end{proof}

Continue to suppose that $A$ is a finite-dimensional Hopf algebra.
The dual vector space $A^*=\operatorname{Hom}_k(A,k)$ of $A$ forms
naturally a Hopf algebra (see~\cite[Sect. 6.2]{Sweedler69}), so that we
have $I_l(A^*)$, $I_r(A^*)$.
We will use the following well-known fact;
see~Proposition~1 (e) and Corollary~1 of \cite{Radford94a}, for example.

\begin{proposition} \label{prop:integral-integral}
Let $\lambda$ be a non-zero left or right integral in $A^*$.
\begin{itemize}
\item[(1)] 
If $\Lambda$ is a left (resp., right) integral in $A$, then $S^{\pm1}(\Lambda)$ is a 
right (resp., left) integral in $A$, and $\lambda(\Lambda) = \lambda(S^{\pm1}(\Lambda))$. 
\item[(2)]
There exists uniquely a left or right integral $\Lambda$ in $A$ such that
$\lambda(\Lambda)=1$.
It follows that the evaluation map $I_l(A^*)\otimes I_l(A)\to k$ and the analogous ones are all linear
isomorphisms.
\end{itemize}
\end{proposition}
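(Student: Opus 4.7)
The plan is to prove the two parts separately, with part (1) split into two stages.

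For the first half of part (1)---that $S^{\pm 1}$ swaps left and right integrals---I would simply apply $S^{\pm 1}$ to the defining identity $a\Lambda = \varepsilon(a)\Lambda$. Since $S$ is an algebra anti-homomorphism satisfying $\varepsilon \circ S = \varepsilon$, and is bijective by finite-dimensionality (already noted before Definition~\ref{def:unimodular}), the resulting identity $S(\Lambda)S(a) = \varepsilon(a)S(\Lambda)$ reparametrizes via $b := S(a)$ into the right-integral condition for $S(\Lambda)$. The argument for $S^{-1}$ is identical, and the case starting from a right integral follows by symmetry.

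The equality $\lambda(\Lambda) = \lambda(S^{\pm 1}(\Lambda))$ is the delicate part, and I expect this to be the main obstacle. The approach would exploit the one-dimensionality of all four integral spaces $I_l(A)$, $I_r(A)$, $I_l(A^*)$, $I_r(A^*)$, together with the fact that precomposition with $S^{\pm 1}$ exchanges left and right integrality in $A^*$ just as $S^{\pm 1}$ does in $A$ (via the antipode $S^*$ of the dual Hopf algebra). The two scalars $\lambda(\Lambda)$ and $\lambda(S^{\pm 1}(\Lambda))$ then live on dual lines linked by Radford's trace identity; rather than reprove it, I would appeal directly to Proposition~1(e) of \cite{Radford94a}, which gives precisely this invariance under the antipode.

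For part (2), I would first note that any nonzero integral $\lambda \in A^*$ makes $A$ into a Frobenius algebra: the bilinear form $(a,b) \mapsto \lambda(ab)$ is nondegenerate. This is the classical consequence of the Fundamental Theorem on Hopf Modules. Given nondegeneracy, for a nonzero left integral $\Lambda \in A$ the identity $\lambda(a\Lambda) = \varepsilon(a)\lambda(\Lambda)$ forces $\lambda(\Lambda) \neq 0$ (choose any $a$ with $\varepsilon(a) \neq 0$), and the analogous computation covers the remaining three evaluation pairings. Since each of $I_l(A), I_r(A), I_l(A^*), I_r(A^*)$ is one-dimensional, nondegeneracy is equivalent to the pairings being linear isomorphisms, and rescaling then supplies a unique normalized $\Lambda$ with $\lambda(\Lambda) = 1$.
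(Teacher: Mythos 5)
The paper gives no proof of this proposition at all: it is stated as a ``well-known fact'' with a pointer to Proposition~1(e) and Corollary~1 of Radford's paper \cite{Radford94a}. Your proposal is therefore necessarily doing more than the paper does, and what you do is correct. The first half of (1) (that $S^{\pm1}$ exchanges left and right integrals) is exactly the standard antipode computation, and for the identity $\lambda(\Lambda)=\lambda(S^{\pm1}(\Lambda))$ you defer to the same result of Radford that the paper itself cites, which is the honest thing to do---that identity is genuinely nontrivial and is not something one rederives in two lines. For (2), your route through the nondegeneracy of the form $(a,b)\mapsto\lambda(ab)$ is a clean, self-contained alternative to citing Radford's Corollary~1, and it meshes well with the paper, which records exactly this nondegeneracy as Proposition~\ref{prop:form} (from the Fundamental Theorem of Hopf modules, so there is no circularity); combined with the one-dimensionality of the four integral spaces it does give all four pairings as isomorphisms. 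One small logical slip: your parenthetical ``choose any $a$ with $\varepsilon(a)\neq0$'' has the quantifiers backwards. The correct argument is that if $\lambda(\Lambda)=0$ then $\lambda(a\Lambda)=\varepsilon(a)\lambda(\Lambda)=0$ for \emph{all} $a$, contradicting nondegeneracy since $\Lambda\neq0$; you do not get to pick $a$ and conclude directly. This is a wording issue, not a gap---the surrounding sentence contains the right ingredients.
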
 

Choose $0\neq\lambda\in I_l(A^*)$, and define a bilinear form on $A$ by
\[ \langle~,~\rangle_\lambda:A\times A\to k,
\quad \langle a,b\rangle_\lambda=\lambda(ab). \]
The following is well known; see~\cite[Theorem 2.1.3]{Montgomery93}. 

\begin{proposition}\label{prop:form}
This bilinear form is non-degenerate.
\end{proposition}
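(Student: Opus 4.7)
The plan is to prove that the linear map
\[ \phi : A \to A^*, \qquad \phi(a)(b) = \lambda(ab), \]
is bijective. Non-degeneracy of $\langle\cdot,\cdot\rangle_\lambda$ in the right-hand argument is precisely injectivity of $\phi$, and since $\dim_k A = \dim_k A^*$ is finite, injectivity is equivalent to bijectivity, which in turn forces non-degeneracy in the left-hand argument as well. Hence it suffices to exhibit an explicit one-sided inverse of $\phi$.

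By Proposition~\ref{prop:integral-integral}(2) I can pick a nonzero integral $\Lambda \in A$ normalized so that $\lambda(\Lambda) = 1$; by part~(1) of the same proposition, after replacing $\Lambda$ by $S^{\pm 1}(\Lambda)$ if necessary, I may arrange whichever side convention (left or right integral) is most convenient. I would then define $\psi : A^* \to A$ by a Sweedler-type formula such as
\[ \psi(f) \;=\; f\bigl(S(\Lambda_{(1)})\bigr)\,\Lambda_{(2)}, \]
and verify $\phi \circ \psi = \mathrm{id}_{A^*}$; the equality $\psi \circ \phi = \mathrm{id}_A$ then follows automatically from the dimension count.

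The verification reduces, on testing against $b \in A$ and varying $f \in A^*$, to an identity in $A$ of the shape $\lambda(\Lambda_{(2)}\,b)\,S(\Lambda_{(1)}) = b$. I would establish this by a standard Sweedler-notation manipulation: expand $\Delta(\Lambda\, b)$, apply the left-integral property of $\lambda$ in its dualized form $\lambda(x_{(2)})\,x_{(1)} = \lambda(x)\cdot 1_A$, invoke the antipode axiom $a_{(1)}S(a_{(2)}) = \varepsilon(a)\cdot 1$, and use the integral property of $\Lambda$ together with the normalization $\lambda(\Lambda) = 1$ to collapse the remaining terms. This is the content of the classical Larson--Sweedler theorem expressing that a finite-dimensional Hopf algebra is Frobenius with Frobenius form $\lambda$. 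The main obstacle is therefore purely bookkeeping: aligning the left/right conventions for $\lambda$ and $\Lambda$ and tracking the occurrences of $S$ versus $S^{-1}$ so that the Sweedler-notation identity collapses as claimed; once the conventions are fixed the verification is routine.
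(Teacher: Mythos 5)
Your proposal is correct in substance, but note that the paper itself offers no argument for Proposition~\ref{prop:form}: it simply cites \cite[Theorem 2.1.3]{Montgomery93}. What you sketch is the standard Larson--Sweedler/Frobenius argument, and the one identity your proof hinges on, namely $S(\Lambda_{(1)})\,\lambda(\Lambda_{(2)}b)=b$ for $\Lambda$ the right integral normalized by $\lambda(\Lambda)=1$, is in fact verified explicitly later in the paper, in the proof of Lemma~\ref{lem:Ulambda}(1): it is precisely the first characterizing equation of \eqref{eq:Ulambda} for the element $S(\Lambda_{(1)})\otimes\Lambda_{(2)}$. That verification uses only \eqref{eq:right-integral} and the left-integral property of $\lambda$, not non-degeneracy, so importing it here is not circular; your route therefore buys an honest, self-contained proof where the paper outsources to the literature. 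Two small corrections. First, with $\phi(a)(b)=\lambda(ab)$, injectivity of $\phi$ is non-degeneracy in the \emph{left}-hand argument, not the right; this is harmless since, as you say, $\phi\circ\psi=\mathrm{id}_{A^*}$ gives surjectivity, the dimension count upgrades this to bijectivity, and bijectivity yields non-degeneracy on both sides. Second, rather than leaving the Sweedler manipulation as ``routine bookkeeping,'' you should record the two inputs in usable form: $\Lambda_{(1)}S^{-1}(a)\otimes\Lambda_{(2)}=\Lambda_{(1)}\otimes\Lambda_{(2)}a$ (from $\Lambda\in I_r(A)$) and $g(\Lambda_{(1)})\,\lambda(\Lambda_{(2)})=g(1)\,\lambda(\Lambda)$ for all $g\in A^*$ (the dualized left-integral property of $\lambda$); from these, $f(S(\Lambda_{(1)}))\,\lambda(\Lambda_{(2)}b)=f(b)$ follows in two lines, exactly as in the paper's proof of Lemma~\ref{lem:Ulambda}(1).
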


Choose bases $(\alpha_i)$, $(\beta_i)$ of $A$ which are dual to each
other with respect to $\langle~,~\rangle_\lambda$, so that
$\langle\alpha_i,\beta_j\rangle_\lambda=\delta_{ij}$.
Set
\[ U_\lambda=\sum_i\beta_i\otimes\alpha_i\in A\otimes A. \]
This element is characterized by the property that 
\begin{equation}\label{eq:Ulambda}
\sum_i \beta_i~\langle \alpha_i, a\rangle_{\lambda} = a
~~\text{or}~~\sum_i \langle a, \beta_i\rangle_{\lambda}~\alpha_i = a
\end{equation}
for all $a\in A$. 

\begin{lemma}\label{lem:Ulambda} 
Let $0 \ne \lambda \in I_l(A^*)$ as above. 
\begin{itemize}
\item[(1)] Let $\Lambda$ be a unique right integral in $A$ such that $\lambda(\Lambda)=1$. Then 
\[ U_{\lambda} = S(\Lambda_{(1)})\otimes \Lambda_{(2)}. \] 
\item[(2)] Let $\Lambda$ be a unique left integral in $A$ such that $\lambda(\Lambda)=1$. Then 
\[ U_{\lambda} = \Lambda_{(2)}\otimes S^{-1}(\Lambda_{(1)}). \]
\end{itemize}
\end{lemma}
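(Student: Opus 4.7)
The plan is to verify \eqref{eq:Ulambda} for the two proposed expressions.  Since (2) will follow from (1) by a short antipode manipulation, the substance of the argument lies in~(1).

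For part~(1), I set $T(a):=\sum S(\Lambda_{(1)})\,\lambda(\Lambda_{(2)}a)$ and aim to prove $T=\mathrm{id}_A$ as linear endomorphisms of $A$; by \eqref{eq:Ulambda} this identifies $U_\lambda$ with $S(\Lambda_{(1)})\otimes\Lambda_{(2)}$.  My tool will be the convolution algebra $(\mathrm{End}_k(A),*,\eta\varepsilon)$: I will show $S*T=\eta\varepsilon$.  The computation uses three inputs in sequence: (i)~$S$ is an anti-algebra homomorphism, which lets me recognize $\sum S(a_{(1)})S(\Lambda_{(1)})\,\lambda(\Lambda_{(2)}a_{(2)})$ as $\sum S((\Lambda a)_{(1)})\lambda((\Lambda a)_{(2)})$; (ii)~the right-integral property $\Lambda a=\varepsilon(a)\Lambda$, which collapses this to $\varepsilon(a)\sum S(\Lambda_{(1)})\lambda(\Lambda_{(2)})$; and (iii)~the left-integral property of $\lambda$, which in the form $\sum x_{(1)}\lambda(x_{(2)})=\lambda(x)\cdot 1$ applied at $x=\Lambda$ (and then followed by $S$) yields $\sum S(\Lambda_{(1)})\lambda(\Lambda_{(2)})=S(1)=1$.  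Assembling these gives $(S*T)(a)=\varepsilon(a)\cdot 1$, that is, $S*T=\eta\varepsilon$.  Since the antipode axiom makes $\mathrm{id}_A$ the two-sided convolution inverse of $S$, uniqueness of inverses in the convolution monoid forces $T=\mathrm{id}_A$.

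For part~(2), I will reduce to~(1) by applying $S^{-1}$: set $\Lambda':=S^{-1}(\Lambda)$.  By Proposition~\ref{prop:integral-integral}(1), this $\Lambda'$ is a right integral with $\lambda(\Lambda')=\lambda(\Lambda)=1$, so part~(1) yields $U_\lambda=S(\Lambda'_{(1)})\otimes\Lambda'_{(2)}$.  Expanding $\Delta(\Lambda')$ via the standard identity $\Delta\circ S^{-1}=\tau\circ(S^{-1}\otimes S^{-1})\circ\Delta$ (with $\tau$ the flip) gives $\Lambda'_{(1)}\otimes\Lambda'_{(2)}=S^{-1}(\Lambda_{(2)})\otimes S^{-1}(\Lambda_{(1)})$; after the $S\circ S^{-1}$ on the first tensorand cancels, the claimed formula $\Lambda_{(2)}\otimes S^{-1}(\Lambda_{(1)})$ falls out.

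The main obstacle is spotting the right convolution identity in~(1): one has to compose $S$ on the \emph{left} of $T$ so that $S(a_{(1)})S(\Lambda_{(1)})$ can be recombined as $S(\Lambda_{(1)}a_{(1)})$ via the antihomomorphism property \emph{before} invoking the integral laws.  Once this step is identified, the two integral identities conspire to produce $\eta\varepsilon$ on the nose, and the antipode axiom closes out the argument.
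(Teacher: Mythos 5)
Your proof is correct, and it follows the same overall skeleton as the paper's: both arguments establish part (1) by checking that $S(\Lambda_{(1)})\otimes\Lambda_{(2)}$ satisfies the first characterizing equation in \eqref{eq:Ulambda}, and both deduce part (2) by applying part (1) to the right integral $S^{-1}(\Lambda)$ via Proposition~\ref{prop:integral-integral}(1). Where you genuinely diverge is in the mechanics of the verification in (1). The paper proves $f(S(\Lambda_{(1)}))\,\lambda(\Lambda_{(2)}a)=f(a)$ for all $f\in A^*$ by a direct computation that leans on the ``shifted'' form of the right-integral property, $\Lambda_{(1)}S^{-1}(a)\otimes\Lambda_{(2)}=\Lambda_{(1)}\otimes\Lambda_{(2)}a$ (Eq.~\eqref{eq:right-integral}), followed by the left-integral property of $\lambda$ in the form $f'(\Lambda_{(1)})\lambda(\Lambda_{(2)})=f'(1)\lambda(\Lambda)$. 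You instead set $T(a)=S(\Lambda_{(1)})\lambda(\Lambda_{(2)}a)$, compute $S*T=\eta\varepsilon$ in the convolution algebra $\operatorname{End}_k(A)$ using only the raw identities $\Lambda a=\varepsilon(a)\Lambda$ and $x_{(1)}\lambda(x_{(2)})=\lambda(x)1$, and conclude $T=\mathrm{id}_A$ by uniqueness of convolution inverses (the antipode axiom making $\mathrm{id}_A$ the two-sided inverse of $S$). Your route buys independence from the auxiliary identity \eqref{eq:right-integral} --- which the paper asserts without proof and which needs its own small derivation --- at the modest cost of invoking the convolution formalism; the paper's route is more self-contained at the level of elementwise computation and has the side benefit of exhibiting \eqref{eq:right-integral} explicitly, which is reused later in the proof of Proposition~\ref{prop:Hopf-alg-as-q-sym-alg}. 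Both computations are valid as written; in particular your three steps (antihomomorphism property of $S$, the right-integral law, then $S$ applied to $\Lambda_{(1)}\lambda(\Lambda_{(2)})=\lambda(\Lambda)1$) assemble correctly to $\varepsilon(a)1$.
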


\begin{proof}
\begin{itemize}
\item[(1)]
Since $\Lambda \in I_r(A)$, we have
\begin{equation} \label{eq:right-integral}
\Lambda_{(1)}S^{-1}(a)\otimes\Lambda_{(2)}=\Lambda_{(1)}\otimes\Lambda_{(2)}a,~~a \in A.
\end{equation}
To see that the element $S(\Lambda_{(1)})\otimes\Lambda_{(2)}$ satisfies the first equation of \eqref{eq:Ulambda},
we have to show that for any $a\in A$, $f\in A^*$,
\[ f(S(\Lambda_{(1)}))\, \lambda(\Lambda_{(2)}a) = f(a). \]
From \eqref{eq:right-integral} and $\lambda \in I_l(A^*)$,
we see that this left-hand side equals
\[ S^*(f \leftharpoonup a)(\Lambda_{(1)}) \, \lambda(\Lambda_{(2)})
=S^*(f\leftharpoonup a)(1)\, \lambda(\Lambda) =f(a), \]
as desired, where $f\leftharpoonup a$ is defined by $(f\leftharpoonup a)(b) = f(ab)$, $b \in A$.
\item[(2)]
If $\Lambda \in I_l(A)$ with $\lambda(\Lambda)=1$,
then $S^{-1}(\Lambda)\in I_r(A)$ with $\lambda(S^{-1}(\Lambda))=1$,
by Proposition~\ref{prop:integral-integral} (1).
Part 1 applied to $S^{-1}(\Lambda)$ shows Part 2.
\end{itemize}
\end{proof}

To continue our construction we define linear maps,
\begin{align*}
&\mathrm{ev}_A:A\otimes A\to k, &
&\mathrm{ev}_A(a\otimes b)=\langle a,b\rangle_\lambda, \\
&\mathrm{coev}_A:k\to A\otimes A, &
&\mathrm{coev}_A(1)=U_\lambda.
\end{align*}
Let $m_A:A\otimes A\to A$ denote the product on $A$.
Then we have
\begin{align}
\mathrm{ev}_A=\lambda\circ m_A. \label{eq:ev}
\end{align}
Obviously, $\mathrm{ev}_A$, $\mathrm{coev}_A$ defined above satisfy the
selfduality~\eqref{eq:selfduality} and the Frobenius
property~\eqref{eq:Frobenius}.

\begin{proposition} \label{prop:Hopf-alg-as-q-sym-alg}
Assume that $A$ is unimodular.
\begin{itemize}
\item[(1)]
$\mathrm{ev}_A$ and $\mathrm{coev}_A$ defined above are both morphisms
in ${}_A^A\mathcal{YD}$.
\item[(2)]
The object $A=(A,\triangleright,\Delta)$ in ${}_A^A\mathcal{YD}$,
equipped with $m_A$, $\mathrm{ev}_A$, $\mathrm{coev}_A$, is a
quantum-commutative quantum-symmetric algebra in ${}_A^A\mathcal{YD}$.
\end{itemize}
\end{proposition}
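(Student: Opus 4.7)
My plan is to verify each of the algebraic properties directly, leveraging the formulas already in place: $\mathrm{ev}_A=\lambda\circ m_A$ from \eqref{eq:ev}, the characterizing equations \eqref{eq:Ulambda} for $U_\lambda$, and the two expressions for $U_\lambda$ in terms of integrals given by Lemma~\ref{lem:Ulambda}.

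I first dispose of Part (2), which largely reduces to Part (1) together with what is already in hand. The associativity of $m_A$ and the quantum-commutativity \eqref{eq:q-commutativity} both come from Lemma~\ref{lem:YD-alg}. The selfduality \eqref{eq:selfduality} is an immediate consequence of the characterizing property \eqref{eq:Ulambda} of $U_\lambda$, which is precisely the pair of zig-zag identities. The Frobenius property \eqref{eq:Frobenius} reduces, via $\mathrm{ev}_A=\lambda\circ m_A$, to the associativity of $m_A$: both sides of \eqref{eq:Frobenius} compute the map $(a\otimes b\otimes c)\mapsto\lambda(abc)$. Finally, the quantum-symmetry \eqref{eq:q-symmetry} is immediate from quantum-commutativity and \eqref{eq:ev}:
\[ \mathrm{ev}_A\circ c_{A,A}=\lambda\circ m_A\circ c_{A,A}=\lambda\circ m_A=\mathrm{ev}_A. \]

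It remains to establish Part (1), where the real content of the proposition sits. I verify separately the $A$-linearity and $A$-colinearity of $\mathrm{ev}_A$ and $\mathrm{coev}_A$. The $A$-colinearity of $\mathrm{ev}_A$ is essentially a restatement of $\lambda\in I_l(A^*)$: under the diagonal coaction on $A\otimes A$, the relevant side becomes $a_{(1)}b_{(1)}\lambda(a_{(2)}b_{(2)})=(ab)_{(1)}\lambda((ab)_{(2)})$, which equals $\lambda(ab)\cdot 1_A$ by the defining property of a left integral in $A^*$. The $A$-colinearity of $\mathrm{coev}_A$ is a parallel Sweedler calculation: substituting $U_\lambda=S(\Lambda_{(1)})\otimes\Lambda_{(2)}$ from Lemma~\ref{lem:Ulambda}(1), applying the diagonal coaction, and using the antipode axiom $S(x_{(1)})x_{(2)}=\varepsilon(x)1$ inside the $4$-fold coproduct of $\Lambda$ collapses the accumulated first tensor factor to $1$.

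The unimodularity assumption is used precisely when verifying the two $A$-linearity conditions. For $\mathrm{ev}_A$, since $A$ is an $A$-module algebra under conjugation, $A$-linearity amounts to the conjugation-invariance of $\lambda$, namely $\lambda(a_{(1)}xS(a_{(2)}))=\varepsilon(a)\lambda(x)$. I would derive this from the identity $\lambda(xy)=\lambda(yS^2(x))$, which holds for a left integral $\lambda\in I_l(A^*)$ whenever $A$ is unimodular (a standard consequence of Radford's $S^4$-formula, or provable directly from Lemma~\ref{lem:Ulambda} and Proposition~\ref{prop:integral-integral}); applying it with $x=a_{(1)}$ and $y=bS(a_{(2)})$ yields
\[ \lambda(a_{(1)}bS(a_{(2)}))=\lambda(bS(a_{(2)})S^2(a_{(1)}))=\lambda(bS(S(a_{(1)})a_{(2)}))=\varepsilon(a)\lambda(b). \]
For $\mathrm{coev}_A$, the goal $a\cdot U_\lambda=\varepsilon(a)U_\lambda$ can be attacked either by a direct Sweedler computation from $U_\lambda=S(\Lambda_{(1)})\otimes\Lambda_{(2)}$, exploiting that by unimodularity $\Lambda$ is simultaneously a left and right integral, or by a categorical shortcut: once $\mathrm{ev}_A$ is known to be a morphism in ${}_A^A\mathcal{YD}$ and is non-degenerate (Proposition~\ref{prop:form}), the rigidity of ${}_A^A\mathcal{YD}$ on finite-dimensional objects forces the unique $\mathrm{coev}_A$ satisfying the zig-zag identities to be a morphism as well. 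I expect the Sweedler bookkeeping behind the $A$-linearity of $\mathrm{coev}_A$ to be the main technical nuisance; the appeal to the Radford-type identity for $\mathrm{ev}_A$ is the genuinely non-formal input that requires unimodularity.
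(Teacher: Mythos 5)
Your proposal is correct and follows essentially the same route as the paper: $A$-colinearity of $\mathrm{ev}_A$ and $\mathrm{coev}_A$ from the integral property of $\lambda$ and the expression $U_\lambda=S(\Lambda_{(1)})\otimes\Lambda_{(2)}$, $A$-linearity of $\mathrm{ev}_A$ from the Radford-type identity $\lambda(ab)=\lambda(bS^2(a))$ valid under unimodularity, $A$-linearity of $\mathrm{coev}_A$ from $\Lambda$ being a two-sided integral, and Part (2) reduced to Lemma~\ref{lem:YD-alg} via $\mathrm{ev}_A=\lambda\circ m_A$. Your alternative rigidity argument for $\mathrm{coev}_A$ (uniqueness of duality data in the rigid category of finite-dimensional Yetter--Drinfeld modules once the non-degenerate $\mathrm{ev}_A$ is known to be a morphism) is also sound and would in fact yield the full morphism property of $\mathrm{coev}_A$ at once, letting you skip the Sweedler computation the paper carries out.
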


\begin{proof}
\begin{itemize}
\item[(1)]
First, we show, without the unimodularity assumption, that
$\mathrm{ev}_A$ and $\mathrm{coev}_A$ are $A$-colinear.
Since $\lambda$, regarded as a linear map $A \to k$, is left
$A$-colinear, it follows by \eqref{eq:ev} that $\mathrm{ev}_A$ is $A$-colinear, since $m_A$
is obviously $A$-colinear. 
For $\mathrm{coev}_A$, let us use the expression of $U_{\lambda}$ given by Lemma~\ref{lem:Ulambda} (1). 
Then the $A$-colinearity of $\mathrm{coev}_A$ follows since we see
\[ S(\Lambda_{(1)})_{(1)}(\Lambda_{(2)})_{(1)} \otimes 
S(\Lambda_{(1)})_{(2)} \otimes (\Lambda_{(2)})_{(2)} = 1 \otimes U_{\lambda}. \]

To show the remaining $A$-linearity, assume that $A$ is unimodular.
Let $(A^{op})^{cop}$ denote the Hopf algebra $A$ with the opposite
product and coproduct; it has the same antipode as $A$, and our
$\lambda$ is a right integral in its dual Hopf algebra.
Apply the equality (a) of \cite[Theorem~3]{Radford94a} to this
$(A^{op})^{cop}$.
Since the unimodularity assumption implies that the $\alpha$ in that
equality equals $\varepsilon$, it follows that
\begin{equation}\label{eq:commutator}
\lambda(ab)=\lambda(bS^2(a)), ~~~ a,b\in A.
\end{equation}
Since the product $m_A$ is obviously $A$-linear, it follows by
\eqref{eq:ev} that in order to prove the $A$-linearity of
$\mathrm{ev}_A$, it suffices to see that $\lambda:A\to k$ is $A$-linear.
In fact, this holds true, since we see from \eqref{eq:commutator} that
for $a,b\in A$,
\begin{equation*} 
\lambda(a_{(1)}bS(a_{(2)}))
=\lambda(bS(a_{(2)})S^2(a_{(1)}))
=\varepsilon(a)\lambda(b).
\end{equation*}
The $A$-linearity of $\mathrm{coev}_A$ will follow if one sees, using the same expression
of $U_{\lambda}$ as above, that for every $a \in A$, 
\[ a_{(1)}\rhd S(\Lambda_{(1)}) \otimes a_{(2)} \rhd \Lambda_{(2)} 
= \varepsilon(a)\, U_{\lambda}.  \]
Use \eqref{eq:right-integral} and the analogous equation 
\[ a \Lambda_{(1)} \otimes \Lambda_{(2)} = \Lambda_{(1)} \otimes S^{-1}(a)\Lambda_{(2)}, \] 
which holds since $\Lambda \in I_l(A)$. 
Then we see that the left-hand side of the desired equation equals
\begin{align*}
&a_{(1)}S(\Lambda_{(1)})S(a_{(2)}) \otimes a_{(3)}\Lambda_{(2)}S(a_{(4)}) \\
&=S(a_{(2)} \Lambda_{(1)} S^{-1}(a_{(1)})) \otimes a_{(3)}\Lambda_{(2)} S(a_{(4)}) \\
&=S(\Lambda_{(1)}) \otimes a_{(3)}S^{-1}(a_{(2)})\Lambda_{(2)} a_{(1)}S(a_{(4)}),
\end{align*}
which is seen to equal the right-hand side.
\item[(2)]
It remains to verify the quantum-symmetry.
By~\eqref{eq:ev}, this desired property follows from the
quantum-commutativity which was verified by Lemma~\ref{lem:YD-alg}.
\end{itemize}
\end{proof}

\begin{remark}\label{rem:biGalois}
The construction above is generalized as follows.
Suppose that $A$, $L$ are finite-dimensional Hopf algebras, and that $B$
is an $(L,A)$-biGalois object, that is, an $(L,A)$-bicomodule algebra
which is a Galois extension \cite[Definition 8.1.1]{Montgomery93} over
$k$ on both sides.
Choose $0\neq\lambda\in I_l(A^*)$, and define
$\langle b,c\rangle_\lambda=b_{(0)}c_{(0)}\lambda(b_{(1)}c_{(1)})$
for $b,c\in B$, where $b\mapsto b_{(0)}\otimes b_{(1)}$ denotes the
right $A$-comodule structure on $B$.
Then one can prove that this last defines indeed a bilinear form
$\langle~,~\rangle_\lambda:B\times B\to k$
which is non-degenerate.
By the same procedure as above, we see that if $A$ is unimodular, then
$B$ turns into a quantum-commutative quantum-symmetric algebra in
${}_L^L\mathcal{YD}$, where the left $L$-module structure on $B$ is
given by the so-called Miyashita-Ulbrich action.
This construction applied to $A$, which is regarded naturally as an
$(A,A)$-biGalois object, produces the quantum-commutative
quantum-symmetric algebra in ${}_A^A\mathcal{YD}$ given by the last
proposition.

However, we have a natural equivalence ${}_A^A\mathcal{YD}\approx{}_L^L\mathcal{YD}$
of braided tensor categories (see~\cite[Proposition 5.1]{Masuoka08}, for example),
under which $A$ and $B$ correspond to each other, so that the associated
braided tensor functors $F_A$, $F_B$ are identified via the equivalence.
Therefore, we may restrict ourselves to Hopf algebras, without working
with biGalois objects.
\end{remark}

\section{Invariants derived from unimodular Hopf algebras}

Let $A$ be a finite-dimensional unimodular Hopf algebra, and choose $0\neq\lambda\in I_l(A^*)$.
By Proposition~\ref{prop:Hopf-alg-as-q-sym-alg},
$A$ turns into a quantum-commutative quantum-symmetric algebra $A$ in ${}_A^A\mathcal{YD}$.
By Corollary~\ref{cor:invariant} this $A$ gives an invariant $F_A(H)$ for handlebody-links $H$,
which has values in $k$ since the endomorphism ring $\mathrm{End}(k)$
in ${}_A^A\mathcal{YD}$ coincides with $k$.
The invariant $F_A(H)$ depends on choice of $\lambda$,
but we will not indicate it within the notation except in the following.

\begin{remark}\label{rem:choice-of-lambda}
Let us write here $F_{A,\lambda}(H)$ for $F_A(H)$, indicating $\lambda$.
Let $0\ne c\in k$.
If we replace $\lambda$ with $c\lambda$, then $\mathrm{ev}_A$
(resp.,~$\mathrm{coev}_A$) is replaced by its scalar multiple by
$c$ (resp.,~by $c^{-1}$).
Therefore, we have
\[ F_{A,c\lambda}(H)=c^{\#\cap-\#\cup} F_{A,\lambda}(H), \]
where $\#\cap$, $\#\cup$ respectively denote the numbers of $\cap$,
$\cup$ in $H$.
\end{remark}

Here is the simplest example of computations.

\begin{example} \label{ex:sampleO}
Let $O$ be the trivial genus $1$ handlebody-knot, which is represented
by the trivial knot.
Then,
\[ F_A(O)= \operatorname{Trace}S^2, \]
the trace of the linear endomorphism $S^2 = S \circ S$ of $A$. In particular, $F_A(O)
= (\operatorname{dim}A)1$, if $S$ is an involution, that is, $S^2 = \mathrm{id}_A$. 
To prove the formula above, we use the expression of $U_{\lambda}$ given by Lemma~\ref{lem:Ulambda} (1).
Then it follows by Eq.~(3) of \cite{LarsonRadford87} that
\[ F_A(O) = \lambda(S(\Lambda_{(1)})\, \Lambda_{(2)}) 
= \lambda(1)\, \varepsilon(\Lambda)=\operatorname{Trace}S^2. \]  
\end{example}

We should carefully choose $A$ for the invariant, as is seen from the
following proposition, whose proof will be postponed for a moment.

\begin{proposition}\label{prop:zero-invariant}
If $A$ is not cosemisimple, then $F_A(H)=0$ for any handlebody-link $H$.
\end{proposition}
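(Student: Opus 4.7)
The plan is to express $F_A(H)$ as a categorical trace of an endomorphism of $A$ in ${}_A^A\mathcal{YD}$, and to show that every such trace carries $\lambda(1)$ as a factor. Combined with the equivalence ``$A$ cosemisimple $\Leftrightarrow$ $\lambda(1)\neq 0$''---which is Maschke's theorem applied to $A^*$, noting $\varepsilon_{A^*}(\lambda)=\lambda(1)$---the hypothesis then forces $F_A(H)=0$.

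The first step is a topological reduction: every non-empty handlebody-link $H$ admits a presentation $H = \mcap\circ(T\otimes\mmid)\circ\mcup$ in $\mathcal{T}$ for some handlebody-tangle $T\colon 1\to 1$. To produce such a $T$, pick any component $K$ of $H$ and a non-separating meridional disk $D$ of $K$ (available because $K$ has positive genus). After an ambient isotopy in $I^3$ that places $D$ transverse to the intended closing strand, cut $K$ along $D$; the two resulting end disks become the top and bottom of $T$, whose closure in $\mathcal{T}$ reproduces $H$.

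Under $F_A$ this presentation becomes $F_A(H) = \mathrm{ev}_A\circ(f\otimes\mathrm{id}_A)\circ\mathrm{coev}_A$ with $f := F_A(T)\in\mathrm{End}_{{}_A^A\mathcal{YD}}(A)$. The $A$-colinearity condition $\Delta\circ f=(\mathrm{id}\otimes f)\circ\Delta$, after composing with $\mathrm{id}\otimes\varepsilon$, forces the universal form
\[
f(a)=a_{(1)}\,\phi(a_{(2)}), \qquad \phi:=\varepsilon\circ f\in A^*.
\]
Substituting $\mathrm{coev}_A(1)=S(\Lambda_{(1)})\otimes\Lambda_{(2)}$ from Lemma~\ref{lem:Ulambda}(1), expanding $f(S(\Lambda_{(1)}))$ via $\Delta\circ S=(S\otimes S)\tau\Delta$, and then applying the antipode identity $\Lambda_{(1)}\otimes S(\Lambda_{(2)})\Lambda_{(3)}=\Lambda\otimes 1$ to collapse the inner contraction, the trace simplifies to $\phi(S(\Lambda))\,\lambda(1)$, which vanishes by hypothesis.

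The main obstacle, I expect, is the algebraic collapse: recognizing that $A$-colinear endomorphisms of $(A,\Delta)$ in ${}_A^A\mathcal{YD}$ are exactly right convolutions with elements of $A^*$, and then juggling the nested Sweedler indices through $\Delta\circ S$ carefully enough to isolate the factor $\lambda(1)$. The topological step is geometrically intuitive but also requires careful articulation inside the category $\mathcal{T}$.
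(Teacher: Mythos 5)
Your argument is correct, but it reaches the conclusion by a genuinely different route from the paper's. The paper cuts $H$ open at a single top $\cap$, replacing it by $\curlywedge$ to get a ``horned'' tangle $H^{\curlywedge}\colon 0\to 1$; Lemma~\ref{lem:ktoA} shows that any morphism $k\to(A,\triangleright,\Delta)$ in ${}_A^A\mathcal{YD}$ has image in $k1$, so $F_A(H)=\lambda\bigl(F_A(H^{\curlywedge})(1)\bigr)$ is a multiple of $\lambda(1)$, which vanishes precisely when $A$ is not cosemisimple (Theorem~\ref{thm:cosemisimple}~(1), as you also invoke). You instead present $H$ as the closure of a $(1,1)$-tangle $T$ and use only the $A$-colinearity of $f=F_A(T)$, which correctly pins $f$ down as the convolution $a\mapsto a_{(1)}\phi(a_{(2)})$ with $\phi=\varepsilon\circ f$; your Sweedler computation, via $\Delta(S(x))=S(x_{(2)})\otimes S(x_{(1)})$ and $S(\Lambda_{(2)})\Lambda_{(3)}=\varepsilon(\Lambda_{(2)})1$, does yield $\mathrm{ev}_A\circ(f\otimes\mathrm{id}_A)\circ\mathrm{coev}_A=\phi(S(\Lambda))\,\lambda(1)$, so the conclusion follows. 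Both proofs bottom out at the same fact $\lambda(1)=0$; yours needs less of the Yetter--Drinfeld structure (colinearity alone, no module action) but pays for it with an explicit index calculation and a heavier topological decomposition, whereas the paper's horned-tangle device requires only the one-line observation of Lemma~\ref{lem:ktoA} and is reused afterwards (to define $v_A$ and prove its independence of the chosen $\cap$). Two minor points you should make explicit: the closure presentation assumes $H\neq\emptyset$ (the paper's proof has the same implicit restriction, since it needs a top $\cap$ to replace), and the cut must be made along an edge or circle of a representing trivalent graph rather than an arbitrary meridional disk, so that the two new end disks are genuine end disks of a handlebody-tangle in $\mathcal{T}$.
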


We say that a finite-dimensional Hopf algebra $A$ is \emph{cosemisimple}
if $A^*$ is semisimple as an algebra.
We recall the following fundamental results on (co)semisimplicity; see
\cite[Theorem~5.1.8]{Sweedler69} for (1), 
\cite[Theorem~4]{LarsonRadford87}, \cite[Theorem~3.3]{LarsonRadford88}
for (2), and \cite[Corolllary~3.2]{EtingofGelaki98} for (3). 

\begin{theorem}\label{thm:cosemisimple}
Let $A$ be a finite-dimensional Hopf algebra.
\begin{itemize}
\item[(1)] (Sweedler)
The following are equivalent:
\begin{itemize}
\item[(a)] $A$ is cosemisimple;
\item[(b)] There exists a left or right integral $\lambda$ in $A^*$ such that $\lambda(1)=1$;
\item[(c)] There exists a left and right integral $\lambda$ in $A^*$ such that $\lambda(1)=1$.
\end{itemize}
\item[]
In particular, if $A$ is cosemisimple, then $A^*$ is unimodular. 
\item[(2)] (Larson--Radford)
Assume $\operatorname{char}k=0$.
Then the following are equivalent:
\begin{itemize}
\item[(a)] $A$ is cosemisimple;
\item[(d)] $A$ is semisimple;
\item[(e)] The antipode $S$ is an involution.
\end{itemize}
\item[(3)] (Etingof--Gelaki)
Assume $\operatorname{char}k>0$.
Then the following are equivalent:
\begin{itemize}
\item[(f)] $A$ is semisimple and cosemisimple;
\item[(g)] $S$ is an involution, and $\operatorname{char}k$ does not divide $\dim A$.
\end{itemize}
\end{itemize}
\end{theorem}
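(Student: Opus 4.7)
The statement is a compilation of three classical theorems, with each part carrying its own reference; the plan is to follow the arguments in \cite{Sweedler69,LarsonRadford87,LarsonRadford88,EtingofGelaki98} and organize the proof by part, treating them in order.

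For Part~(1) (Sweedler), the core is a Hopf-algebraic Maschke theorem. The implication (c)$\Rightarrow$(b) is trivial. For (b)$\Rightarrow$(a), given a left (or right) integral $\lambda \in A^*$ with $\lambda(1)=1$, one shows every short exact sequence of $A$-comodules splits: for a comodule surjection $\pi:M\to N$ with an arbitrary linear section $s$, an averaging construction built from $s$, the coaction, and $\lambda$ produces a comodule section. For (a)$\Rightarrow$(c), the unit map $k \hookrightarrow A$ realizes the trivial comodule as a subcomodule of $A$, and cosemisimplicity yields a comodule retraction $A \to k$, which is by definition a normalized left integral in $A^*$. Upgrading to a two-sided integral uses the modular function $\alpha \in A$ defined by $\lambda \leftharpoonup a = \alpha(a)\lambda$; a direct coalgebra argument shows $\alpha = \varepsilon$ under cosemisimplicity, after which $\lambda$ is automatically a right integral as well. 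This very statement encapsulates the unimodularity of $A^*$.

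For Part~(2) (Larson--Radford, $\operatorname{char} k=0$), the decisive direction is (d)$\Rightarrow$(e). The key tool is the trace formula
\[ \operatorname{Trace} S^2 = \lambda(1)\, \varepsilon(\Lambda), \]
cf.\ Eq.~(3) of \cite{LarsonRadford87} and the computation in Example~\ref{ex:sampleO}. If $A$ is semisimple, a two-sided integral $\Lambda \in A$ may be chosen so that $\varepsilon(\Lambda)=1$, and in characteristic zero one arranges $\lambda(1) \ne 0$; hence $\operatorname{Trace} S^2 \ne 0$. Combined with Radford's $S^4$-formula, which expresses $S^4$ as an inner automorphism built from the distinguished grouplike elements of $A$ and $A^*$, an eigenvalue-rigidity analysis on the finite-order automorphism $S^2$ forces $S^2 = \operatorname{id}_A$. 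The reverse implications (e)$\Rightarrow$(a) and (e)$\Rightarrow$(d) follow by reading the same trace identities in the other direction, and (a)$\Leftrightarrow$(d) in characteristic zero is recovered by applying the chain (a)$\Leftrightarrow$(e)$\Leftrightarrow$(d) to $A^*$ together with Part~(1).

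For Part~(3) (Etingof--Gelaki, $\operatorname{char} k>0$), one refines the Larson--Radford strategy. The implication (f)$\Rightarrow$(g) uses the trace formula once more: semisimplicity plus cosemisimplicity force both $\varepsilon(\Lambda)=1$ and $\lambda(1)\neq 0$, so $\operatorname{Trace} S^2 \neq 0$, which (in conjunction with the same $S^4$-formula) yields $S^2=\operatorname{id}$; and $\operatorname{Trace} S^2 = \dim A$ in $k$, being nonzero, gives $\operatorname{char} k \nmid \dim A$. The converse (g)$\Rightarrow$(f) is the deep part: following \cite{EtingofGelaki98}, one lifts $A$ to a Hopf algebra over a characteristic-zero complete discrete valuation ring, applies the Larson--Radford theorem to the generic fibre, and descends semisimplicity and cosemisimplicity back to $A$; non-divisibility of $\dim A$ by $\operatorname{char} k$ is precisely what keeps the lifted integrals non-degenerate under reduction modulo the maximal ideal. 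The main obstacle across the whole theorem is the Larson--Radford step ``semisimple $\Rightarrow$ $S^2=\operatorname{id}$,'' whose combination of the trace formula with Radford's $S^4$-formula is the technical heart underlying both Parts~(2) and~(3).
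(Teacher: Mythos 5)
The paper does not actually prove Theorem~\ref{thm:cosemisimple}: it is stated as a recollection of classical results and discharged entirely by the citations to Sweedler, Larson--Radford and Etingof--Gelaki, so there is no in-paper argument to compare yours against. Your sketch defers to the same sources and is broadly faithful to them in outline: the Maschke-type averaging and the identification of a comodule retraction $A\to k$ with a normalized integral for Part~(1), the trace formula $\operatorname{Trace}S^2=\lambda(1)\,\varepsilon(\Lambda)$ together with Radford's $S^4$-formula for Part~(2), and the lifting to a characteristic-zero discrete valuation ring for Part~(3) are indeed the mechanisms used in the cited papers.

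That said, your outline of the decisive step (d)$\Rightarrow$(e) of Part~(2) is circular as written. You assert that for semisimple $A$ in characteristic zero ``one arranges $\lambda(1)\ne 0$''; but by Part~(1), the existence of an integral $\lambda\in A^*$ with $\lambda(1)\ne 0$ is \emph{precisely} cosemisimplicity of $A$, which in characteristic zero is equivalent to the statement you are in the middle of proving. The genuinely hard content of \cite{LarsonRadford88} is to pass from semisimplicity alone to $\operatorname{Trace}S^2\ne 0$ (equivalently, to cosemisimplicity) without presupposing it. Relatedly, your ``eigenvalue-rigidity analysis'' is too loose: knowing that $S^2$ has finite order (via the $S^4$-formula) and nonzero trace does not force $S^2=\mathrm{id}$, since a nonzero sum of roots of unity need not consist entirely of $1$'s; the argument of \cite{LarsonRadford87} obtains the stronger quantitative conclusion by exploiting the trace identities on $A$ and $A^*$ (and their integrals) simultaneously. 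Since your Part~(3) leans on the same Larson--Radford step, the gap propagates there. None of this affects the paper, which quotes the theorem rather than proving it, but as a standalone argument your Part~(2) is incomplete at exactly the point you yourself identify as the technical heart.
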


We remark that $\lambda$ such as in (b), (c) of Part 1 above is unique.
As the dual result of Part 1, a finite-dimensional semisimple Hopf
algebra is unimodular.
It follows from Part 2 that if $\operatorname{char}k=0$, then a
finite-dimensional cosemisimple Hopf algebra is necessarily unimodular.
Whether the same statement holds true in positive characteristic seems
an open problem.

Our proof of Proposition~\ref{prop:zero-invariant} is based on the following fact.
 
\begin{lemma} \label{lem:ktoA}
Let $A$ be a finite-dimensional Hopf algebra.
The vector space ${}_A^A\mathcal{YD}(k,A)$ of all morphisms
$\phi:k\to A=(A,\triangleright,\Delta)$ in ${}_A^A\mathcal{YD}$ is
isomorphic, via $\phi\mapsto\phi(1)$, to the sub-vector space $k1$ in
$A$.
\end{lemma}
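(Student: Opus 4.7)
The plan is to unpack the definitions and reduce the claim to the standard fact that the coinvariants of $A$ under its own coproduct coincide with $k1$. Recall that the unit object $k$ in ${}_A^A\mathcal{YD}$ carries the trivial $A$-module structure $a\cdot c = \varepsilon(a)c$ and the trivial $A$-comodule structure $c \mapsto 1\otimes c$. A linear map $\phi:k\to A$ is of course determined by $x:=\phi(1)\in A$, so evaluation at $1$ is an injective linear map into $A$; the only question is to identify its image when we restrict to morphisms in ${}_A^A\mathcal{YD}$.

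First I would translate the two structural conditions on $\phi$ into conditions on $x$. The $A$-colinearity of $\phi$ (with $A$ carrying its coproduct as comodule structure) amounts to
\[ \Delta(x) = 1 \otimes x. \]
Applying $\mathrm{id}\otimes\varepsilon$ to this equation gives $x = \varepsilon(x)\,1$, so $x\in k1$. Conversely, every $x=c\,1$ with $c\in k$ satisfies $\Delta(x) = c(1\otimes 1) = 1\otimes x$, so colinearity alone already characterizes the image as $k1$.

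Next I would check that on this subspace the $A$-linearity condition is automatic. The condition reads $a \triangleright x = \varepsilon(a) x$ for all $a\in A$; and for $x = c\,1$ one computes
\[ a\triangleright 1 = a_{(1)}\,1\,S(a_{(2)}) = a_{(1)}S(a_{(2)}) = \varepsilon(a)\,1, \]
so $a\triangleright x = c\varepsilon(a)\,1 = \varepsilon(a) x$, as required. Combined with the previous paragraph, this shows that $\phi\mapsto\phi(1)$ is a linear bijection from ${}_A^A\mathcal{YD}(k,A)$ onto $k1$.

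There is no real obstacle here; the only subtle point is the observation that $A$-colinearity of $\phi:k\to A$ already forces $\phi(1)\in k1$ via $\mathrm{id}\otimes\varepsilon$, after which the verification that $k1$ lies in the image is immediate from $a_{(1)}S(a_{(2)}) = \varepsilon(a)1$. The resulting map is clearly linear, so the isomorphism follows.
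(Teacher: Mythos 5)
Your proof is correct and follows essentially the same route as the paper: both reduce morphisms $\phi:k\to A$ to elements $x=\phi(1)$ satisfying the two Yetter--Drinfeld invariance conditions, observe that the colinearity condition $\Delta(x)=1\otimes x$ is equivalent to $x\in k1$ (via the counit), and note that this already implies the module condition since $a\triangleright 1=\varepsilon(a)1$. The only difference is that you spell out the counit argument explicitly, which the paper leaves implicit.
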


\begin{proof}
Note that given an object $V$ in ${}_A^A\mathcal{YD}$, the vector space
${}_A^A\mathcal{YD}(k,V)$ of all morphisms $\phi:k\to V$ is isomorphic,
via $\phi\mapsto\phi(1)$, to the sub-vector space of $V$ which consists
of the elements $v$ such that
\begin{align}
&av=\varepsilon(a)v, ~~~ a\in A
&\text{and}&
&\rho(v)=1\otimes v.
\label{eq:invariant}
\end{align}
Suppose $V =A$. Then the second condition of \eqref{eq:invariant} is
equivalent to $v\in k1$, which implies the first condition.
This proves the lemma.
\end{proof}

For the following proof and for later use, let $H$ be a handlebody-link.
Choose arbitrarily one from the top handlebody-tangles $\cap$ in $H$, and replace it by $\curlywedge$.
Let $H^{\curlywedge}$ denote the resulting handlebody-tangle; see Figure~\ref{fig:(1,0)-tangle}.
We thus have $s(H^{\curlywedge})=0$, $b(H^{\curlywedge})=1$.
We call $H^{\curlywedge}$ a \emph{handlebody-tangle horned to $H$};
this varies according to choice of the top $\cap$.

\begin{proof}[Proof of Proposition~\ref{prop:zero-invariant}]
Let $H$, $H^{\curlywedge}$ be as above.
Lemma~\ref{lem:ktoA} shows that $F_A(H^{\curlywedge})$ has values in $k1$ ($\subset A$).
Since $I_{l}(A^*)$ is spanned by $\lambda$,
Theorem~\ref{thm:cosemisimple} (1) shows that the non-cosemisimplicity
assumption is equivalent to the condition that $\lambda$ vanishes on $k1$,
which implies that $F_A(H)=\lambda\circ F_A(H^{\curlywedge})(1)=0$,
since $F_A(\cap)=\mathrm{ev}_A=\lambda\circ m_A$ by \eqref{eq:ev}.
\end{proof}

\begin{figure}
\mbox{}\hfill\hfill
\begin{picture}(40,50)
\qbezier(0,10)(20,10)(40,10)
\qbezier(0,30)(20,30)(40,30)
\qbezier(0,10)(0,20)(0,30)
\qbezier(40,10)(40,20)(40,30)
\qbezier(10,30)(10,40)(20,40)
\qbezier(30,30)(30,40)(20,40)
\put(20,0){\makebox(0,0){$H$}}
\end{picture}
\hfill
\begin{picture}(40,50)
\qbezier(0,10)(20,10)(40,10)
\qbezier(0,30)(20,30)(40,30)
\qbezier(0,10)(0,20)(0,30)
\qbezier(40,10)(40,20)(40,30)
\qbezier(10,30)(15,35)(20,40)
\qbezier(30,30)(25,35)(20,40)
\qbezier(20,40)(20,45)(20,50)
\put(20,0){\makebox(0,0){$H^{\curlywedge}$}}
\end{picture}
\hfill\hfill\mbox{}
\caption{}
\label{fig:(1,0)-tangle}
\end{figure}

By modifying $F_A(H)$, we wish to obtain some meaningful invariant of handlebody-links $H$,
when $A$ is not necessarily cosemisimple.
Let $A$ be a finite-dimensional unimodular Hopf algebra,
and choose $0\ne\lambda\in I_l(A^*)$.
Let $Z(A)$ denote the center of $A$.
Assume that
\begin{equation}\label{eq:assumption}
\lambda(z)=\lambda(S(z)), ~~ z\in Z(A).
\end{equation}
This assumption is independent of choice of $\lambda$,
and is satisfied if $A^*$ is unimodular, since then $\lambda = \lambda \circ S$,
as is seen from Proposition~\ref{prop:integral-integral}.
In particular, it is satisfied if $A$ is cosemisimple; see Theorem~\ref{thm:cosemisimple} (1).
There are known examples of finite-dimensional cocommutative Hopf algebras which are not unimodular;
see \cite[p.238]{Montgomery93}, for example.
Their dual Hopf algebras are examples of finite-dimensional
unimodular Hopf algebras which do not satisfy \eqref{eq:assumption}.

\begin{definition} \label{def:modified_invariant}
Let $A$, $\lambda$ be as above.
For a handlebody-link $H$, we define a scalar $v_A(H)$ in $k$ by
\[ v_A(H)=\varepsilon\circ F_A(H^{\curlywedge})(1), \]
where $H^{\curlywedge}$ is a handlebody-tangle horned to $H$.
Notice from Lemma~\ref{lem:ktoA} that $v_A(H)=F_A(H^{\curlywedge})$ in $k$.
\end{definition}

We have to show that the value $F_A(H^{\curlywedge})(1)$ is independent of
choice of the top $\cap$ to be replaced by $\curlywedge$.
This will be proved below Lemma~\ref{lem:double-horns}.

\begin{remark} \label{rem:v_for_cosemisimple}
\begin{itemize}
\item[(1)]
Suppose that $A$ is cosemisimple.
Then by Theorem \ref{thm:cosemisimple} (1), $\lambda$ can be chosen so that $\lambda(1)=1$.
In this case we have $v_A(H) = F_A(H)$ for every handlebody-link $H$, since
\[ v_A(H)=\varepsilon\circ F_A(H^{\curlywedge})(1)=\lambda\circ F_A(H^{\curlywedge})(1)=F_A(H). \]
\item[(2)]
If $A$ is not cosemisimple, we do not have any canonical choice of $\lambda$ as above.
We see from Remark~\ref{rem:choice-of-lambda} that if $\lambda$ is replaced by $c\lambda$ with $0\ne c \in k$,
then $v_A(H)$ changes by the scalar multiple by $c^{\#\cap-\#\cup-1}$.
\end{itemize}
\end{remark} 

\begin{convention}
Taking Part 1 above into account,
we will hereafter choose $\lambda$ so that $\lambda(1)=1$ if $A$ is cosemisimple.
\end{convention}

\begin{lemma}\label{lem:ktoAA}
Let $A$ be a finite-dimensional Hopf algebra.
Then the vector space ${}_A^A\mathcal{YD}(k,A\otimes A)$ of all
morphisms $\phi:k\to A\otimes A$ in ${}_A^A\mathcal{YD}$, where
$A\otimes A$ is the tensor product of two copies of
$A=(A,\triangleright,\Delta)$, is isomorphic, via
$\phi\mapsto \phi(1)$, to the sub-vector space of $A\otimes A$
consisting of the elements $S(z_{(1)})\otimes z_{(2)}$, where $z$ is an
arbitrary element in the center $Z(A)$ of $A$.
\end{lemma}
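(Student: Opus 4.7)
The plan is to apply the reduction underlying Lemma~\ref{lem:ktoA}: morphisms $\phi\colon k\to V$ in ${}_A^A\mathcal{YD}$ correspond via $\phi\mapsto\phi(1)$ to the elements $v\in V$ satisfying both invariance conditions of~\eqref{eq:invariant}. For $V=A\otimes A$ with the tensor-product Yetter--Drinfeld structure, these conditions read, with $X=x\otimes y$ and sums implicit: (i)~$(a_{(1)}\triangleright x)\otimes(a_{(2)}\triangleright y)=\varepsilon(a)(x\otimes y)$ for all $a\in A$, and (ii)~$x_{(1)}y_{(1)}\otimes x_{(2)}\otimes y_{(2)}=1\otimes x\otimes y$.

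First I would handle condition~(ii) alone, using the Galois-type isomorphism
\[ \beta\colon A\otimes A\xrightarrow{\ \simeq\ }A\otimes A,\qquad \beta(a\otimes b)=ab_{(1)}\otimes b_{(2)}, \]
whose inverse is $a\otimes b\mapsto aS(b_{(1)})\otimes b_{(2)}$. Applying $\mathrm{id}\otimes\varepsilon\otimes\mathrm{id}$ to~(ii) collapses the middle factor via the counit and yields $\beta(X)=1\otimes z$, where $z:=(\varepsilon\otimes\mathrm{id})(X)\in A$; inverting $\beta$ then gives $X=S(z_{(1)})\otimes z_{(2)}$. A short Sweedler check using coassociativity and $S(b_{(1)})b_{(2)}=\varepsilon(b)1$ verifies the converse, so the elements $X$ satisfying~(ii) form exactly $\{S(z_{(1)})\otimes z_{(2)} : z\in A\}$, parametrised bijectively by $A$, with $\varepsilon\otimes\mathrm{id}$ the inverse bijection.

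Next I layer in condition~(i). Because $\varepsilon$ is an algebra map, a direct check shows that $\varepsilon\otimes\mathrm{id}$ intertwines the diagonal $A$-action on $A\otimes A$ with the adjoint action $\triangleright$ on $A$; hence so does the inverse map $z\mapsto S(z_{(1)})\otimes z_{(2)}$. Thus~(i) for $X$ is equivalent to adjoint invariance of $z$, namely $a\triangleright z=\varepsilon(a)z$ for every $a\in A$.

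The final step is to identify the space $A^{\mathrm{ad}}$ of adjoint-invariants with the centre $Z(A)$. The inclusion $Z(A)\subseteq A^{\mathrm{ad}}$ is immediate from $a_{(1)}zS(a_{(2)})=za_{(1)}S(a_{(2)})=\varepsilon(a)z$. For the reverse, observe that for any $z\in A$ the antipode axiom applied to the second and third slots of $\Delta^{(2)}(a)=a_{(1)}\otimes a_{(2)}\otimes a_{(3)}$ gives $a_{(1)}\otimes S(a_{(2)})a_{(3)}=a\otimes 1$, hence $a_{(1)}zS(a_{(2)})a_{(3)}=az$; on the other hand, adjoint invariance applied first, followed by the counit, rewrites the same expression as $za$. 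Equating the two yields $az=za$ for all $a$, so $z\in Z(A)$. The only real obstacle is the Sweedler bookkeeping here, which has to be carried out consistently across the two coassociative expansions $(\Delta\otimes\mathrm{id})\Delta$ and $(\mathrm{id}\otimes\Delta)\Delta$; everything else is a direct application of the Hopf-algebra axioms and the structure of $A\otimes A$ in ${}_A^A\mathcal{YD}$ already used in the paper.
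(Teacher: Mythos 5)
Your handling of condition~(ii) is sound and is essentially the coaction half of the paper's argument (the paper packages the same map $\beta(b\otimes c)=bc_{(1)}\otimes c_{(2)}$ as an isomorphism onto an auxiliary Yetter--Drinfeld module $V'$), and your identification of the adjoint invariants of $A$ with $Z(A)$ is correct. The gap is the sentence ``hence so does the inverse map $z\mapsto S(z_{(1)})\otimes z_{(2)}$.'' The counit-induced map $\varepsilon\otimes\mathrm{id}$ does intertwine the diagonal action with $\triangleright$, but it is not injective, and its section $\gamma(z)=S(z_{(1)})\otimes z_{(2)}$ is \emph{not} $A$-linear in general: one computes
\begin{align*}
a\cdot\gamma(z)&=a_{(1)}S(z_{(1)})S(a_{(2)})\otimes a_{(3)}z_{(2)}S(a_{(4)}),\\
\gamma(a\triangleright z)&=S^{2}(a_{(4)})S(z_{(1)})S(a_{(1)})\otimes a_{(2)}z_{(2)}S(a_{(3)}),
\end{align*}
and applying $\mathrm{id}\otimes\varepsilon$ reduces their equality to $a\triangleright S(z)=S(a\triangleright z)$, which fails for noncocommutative $A$. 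Equivalently, the solution set of~(ii) is not stable under the diagonal action, so the restriction of $\varepsilon\otimes\mathrm{id}$ to it is a linear bijection onto $A$ but not a module map. Your argument therefore establishes only one implication: if $X$ satisfies~(i) and~(ii), then $z=(\varepsilon\otimes\mathrm{id})(X)$ is adjoint-invariant, hence central. The converse inclusion --- that $z\in Z(A)$ forces $S(z_{(1)})\otimes z_{(2)}$ to satisfy~(i) --- is the nontrivial half and is left unproved.

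The missing step is true and can be supplied directly. For $z\in Z(A)$ one has $\Delta(a_{(2)}z)=\Delta(za_{(2)})$, i.e.\ $a_{(2)}z_{(1)}\otimes a_{(3)}z_{(2)}=z_{(1)}a_{(2)}\otimes z_{(2)}a_{(3)}$, whence
\begin{align*}
a_{(1)}S(z_{(1)})S(a_{(2)})\otimes a_{(3)}z_{(2)}S(a_{(4)})
&=a_{(1)}S(a_{(2)}z_{(1)})\otimes a_{(3)}z_{(2)}S(a_{(4)})\\
&=a_{(1)}S(z_{(1)}a_{(2)})\otimes z_{(2)}a_{(3)}S(a_{(4)})
=\varepsilon(a)\,S(z_{(1)})\otimes z_{(2)}.
\end{align*}
Alternatively, follow the paper: verify that $\beta$ is $A$-linear when the target carries the twisted action $a(b\otimes c)=a_{(1)}bS(a_{(4)})\otimes a_{(2)}cS(a_{(3)})$, whose invariant-and-coinvariant elements are exactly $1\otimes z$ with $z\in Z(A)$. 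Either way, an explicit Sweedler computation using centrality of $z$ cannot be avoided at this point.
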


\begin{proof}
Set $V=A\otimes A$.
Give to the same vector space $A \otimes A$,
an alternative structure of a Yetter--Derinfeld module by defining
\[a(b\otimes c):=a_{(1)}bS(a_{(4)})\otimes a_{(2)}cS(a_{(3)}),
~~\rho(b\otimes c) = b_{(1)}\otimes(b_{(2)}\otimes c), \]
where $a\in A$, $b\otimes c\in A\otimes A$.
Let $V'$ denote the thus defined object.
We see that $b\otimes c\mapsto bc_{(1)}\otimes c_{(2)}$ gives an isomorphism
$V\overset{\simeq}{\longrightarrow}V'$ in ${}^A_A\mathcal{YD}$,
whose inverse is given by $b\otimes c\mapsto bS(c_{(1)})\otimes c_{(2)}$. 
As is easily seen, the elements $1\otimes z$, $z\in Z(A)$ are precisely
those elements in $V'$ which satisfies the conditions \eqref{eq:invariant}.
It follows that the elements $S(z_{(1)})\otimes z_{(2)}$,
$z\in Z(A)$ are precisely those which satisfies the same conditions.
The proof of Lemma~\ref{lem:ktoA} shows the desired result.
\end{proof}

For the rest of this section, let $A$ be a finite-dimensional unimodular Hopf algebra,
and choose $0\neq\lambda\in I_l(A^*)$ (so that $\lambda(1)=1$ if $A$ is cosemisimple). 

\begin{lemma} \label{lem:double-horns}
Assume \eqref{eq:assumption}.
For any handlebody-tangle $T$ such that $s(T)=0$, $b(T)=2$, we have
\[ (\varepsilon\otimes\lambda)\circ F_A(T)
=(\lambda\otimes\varepsilon)\circ F_A(T). \]
\end{lemma}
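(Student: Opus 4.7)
The plan is to apply Lemma~\ref{lem:ktoAA} directly. Since $F_A$ is a (braided, almost strict) tensor functor landing in ${}_A^A\mathcal{YD}$, the morphism $F_A(T)\colon k\to A\otimes A$ is itself a morphism in ${}_A^A\mathcal{YD}$ (using the identification $F_A(2) \simeq A \otimes A$ given by the canonical tensor structure of the almost strict functor). Therefore, by Lemma~\ref{lem:ktoAA}, there exists some central element $z\in Z(A)$ such that
\[ F_A(T)(1) = S(z_{(1)})\otimes z_{(2)}. \]

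Next I would just evaluate both sides on this element. For the left-hand side,
\[ (\varepsilon\otimes\lambda)\bigl(S(z_{(1)})\otimes z_{(2)}\bigr)
= \varepsilon(S(z_{(1)}))\,\lambda(z_{(2)})
= \lambda\bigl(\varepsilon(z_{(1)}) z_{(2)}\bigr)
= \lambda(z), \]
using $\varepsilon\circ S=\varepsilon$ and the counit axiom. For the right-hand side,
\[ (\lambda\otimes\varepsilon)\bigl(S(z_{(1)})\otimes z_{(2)}\bigr)
= \lambda(S(z_{(1)}))\,\varepsilon(z_{(2)})
= \lambda\bigl(S(\varepsilon(z_{(2)}) z_{(1)})\bigr)
= \lambda(S(z)). \]

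The two expressions coincide by the standing assumption \eqref{eq:assumption}, which says precisely that $\lambda$ and $\lambda\circ S$ agree on $Z(A)$. This gives the desired equality.

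The only nontrivial point to check is that the preceding Lemma~\ref{lem:ktoAA} really applies, that is, that $F_A(T)$ is a morphism in ${}_A^A\mathcal{YD}$ between the specific objects $k$ and $A\otimes A$ (the tensor square of the Yetter--Drinfeld module $(A,\triangleright,\Delta)$). Both facts are immediate from the construction of $F_A$ in Proposition~\ref{prop:F_A2}: $F_A(0)=k$ on the nose, and the canonical isomorphism $F_A(1)\otimes F_A(1)\xrightarrow{\sim}F_A(2)$ identifies $F_A(T)$ with a morphism into $A\otimes A$. Once this identification is in place, the proof is entirely formal, and I do not expect any serious obstacle.
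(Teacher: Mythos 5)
Your proof is correct and follows essentially the same route as the paper: both reduce the claim via Lemma~\ref{lem:ktoAA} to the identity $\lambda(z)=\lambda(S(z))$ for $z\in Z(A)$, which is exactly assumption~\eqref{eq:assumption}. The only (harmless) difference is that the paper first establishes the stronger element-level identity $\lambda(S(z_{(1)}))\,z_{(2)}=S(z_{(1)})\,\lambda(z_{(2)})$ in $A$ using the left/right integral properties of $\lambda$ and $\lambda\circ S$, whereas your direct evaluation with $\varepsilon$ and the counit axiom already suffices for the stated scalar equality.
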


\begin{proof}
By Lemma~\ref{lem:ktoAA}, for a morphism $F_A(T):k\to A\otimes A$ in
${}_A^A\mathcal{YD}$, we have $F_A(T)(1)=S(z_{(1)})\otimes z_{(2)}$ for
some $z\in Z(A)$.
The desired result will follow if we see that
$\lambda(S(z_{(1)}))z_{(2)}=S(z_{(1)})\lambda(z_{(2)})$, or
equivalently,
\[ \lambda(S(z_{(1)}))f(z_{(2)})
=f(S(z_{(1)}))\lambda(z_{(2)}), ~~~ f\in A^*. \]
Since $\lambda\in I_l(A^*)$ and $\lambda\circ S=S^*(\lambda)\in I_r(A^*)$,
we see that the assumption \eqref{eq:assumption} ensures this last desired condition.
\end{proof}

The desired independency of the value $v_A(H)$ follows
since Lemmas~\ref{lem:ktoA} and \ref{lem:double-horns} show
\begin{center}
\begin{picture}(180,60)
\qbezier(0,10)(35,10)(70,10)
\qbezier(0,30)(35,30)(70,30)
\qbezier(0,10)(0,20)(0,30)
\qbezier(70,10)(70,20)(70,30)
\qbezier(10,30)(15,35)(20,40)
\qbezier(30,30)(25,35)(20,40)
\qbezier(20,40)(20,45)(20,50)
\qbezier(40,30)(40,40)(50,40)
\qbezier(60,30)(60,40)(50,40)
\put(90,20){\makebox(0,0){$=$}}
\qbezier(110,10)(145,10)(180,10)
\qbezier(110,30)(145,30)(180,30)
\qbezier(110,10)(110,20)(110,30)
\qbezier(180,10)(180,20)(180,30)
\qbezier(120,30)(120,40)(130,40)
\qbezier(140,30)(140,40)(130,40)
\qbezier(150,30)(155,35)(160,40)
\qbezier(170,30)(165,35)(160,40)
\qbezier(160,40)(160,45)(160,50)
\put(195,15){\makebox(0,0){.}}
\end{picture}
\end{center}
The same idea as proving Lemma~\ref{lem:double-horns} shows the following as well.

\begin{proposition} \label{prop:connected-sum}
Assume \eqref{eq:assumption}.
Given two handlebody-links $H_i$ $(\subset B_i)$, $i=1,2$, contained in disjoint balls $B_i$,
let $H_1\#H_2$ denote the handlebody-link obtained by attaching them by a $1$-handle.
Then we have
\[ v_A(H_1\#H_2)=v_A(H_1)v_A(H_2). \]
\end{proposition}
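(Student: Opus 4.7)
The plan is to realize $H_1\#H_2$ as an explicit composition in the handlebody-tangle category $\mathcal{T}$ and then invoke functoriality together with Lemma~\ref{lem:ktoA}. First I would establish the geometric identity
\[ H_1\#H_2 \;=\; \cap \circ (H_1^{\curlywedge}\otimes H_2^{\curlywedge}) \]
in $\mathcal{T}$, by observing that placing $H_1$ and $H_2$ in disjoint columns with a horn extruded upward from each (which is exactly $H_i^{\curlywedge}\colon 0\to 1$) and then joining the two horns by a top $\cap$ is isotopic to attaching $H_1$ to $H_2$ by a $1$-handle. Since Lemma~\ref{lem:double-horns}, which relies on the assumption \eqref{eq:assumption}, ensures that $v_A$ is independent of which top $\cap$ is opened, I am free to declare this topmost $\cap$ to be the one replaced by $\curlywedge$, yielding
\[ (H_1\#H_2)^{\curlywedge} \;=\; \curlywedge \circ (H_1^{\curlywedge}\otimes H_2^{\curlywedge}). \]

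Next I would apply $F_A$. Since $F_A$ is a tensor functor, the right-hand side becomes
\[ F_A\bigl((H_1\#H_2)^{\curlywedge}\bigr) \;=\; m_A\circ\bigl(F_A(H_1^{\curlywedge})\otimes F_A(H_2^{\curlywedge})\bigr). \]
Each $F_A(H_i^{\curlywedge})\colon k\to A$ is a morphism in ${}_A^A\mathcal{YD}$, so Lemma~\ref{lem:ktoA} places its value at $1\in k$ inside the line $k\cdot 1\subset A$. Writing $F_A(H_i^{\curlywedge})(1)=v_A(H_i)\cdot 1$ (legitimate because $\varepsilon(1)=1$, matching Definition~\ref{def:modified_invariant}), the unitality of $m_A$ yields
\[ F_A\bigl((H_1\#H_2)^{\curlywedge}\bigr)(1) \;=\; v_A(H_1)\,v_A(H_2)\cdot 1, \]
and applying $\varepsilon$ then gives the desired identity $v_A(H_1\#H_2)=v_A(H_1)\,v_A(H_2)$.

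The main obstacle is the geometric step: one must verify that an arbitrary $1$-handle attaching $H_1$ to $H_2$ can, up to ambient isotopy of $S^3$, be put into the standard position of a top $\cap$-arc joining two upward horns extruded from chosen top $\cap$'s of $H_1$ and $H_2$ respectively, so that the disk sum honestly admits this particular presentation in $\mathcal{T}$. Once this reduction is secured, all of the algebra is forced by the tensor-functoriality of $F_A$ and the one-dimensionality of $\mathrm{Hom}_{{}_A^A\mathcal{YD}}(k,A)$ recorded in Lemma~\ref{lem:ktoA}; this is exactly the spirit of the proof of Lemma~\ref{lem:double-horns}, to which the statement of the proposition alludes.
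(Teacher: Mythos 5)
Your proposal is correct and follows essentially the same route as the paper: both rest on the decomposition $H_1\#H_2=\cap\circ(H_1^{\curlywedge}\otimes H_2^{\curlywedge})$, the replacement of that top $\cap$ by $\curlywedge$, and tensor-functoriality of $F_A$. The only (immaterial) difference is in the last step, where the paper simply invokes that $\varepsilon$ is an algebra map, whereas you first reduce the values $F_A(H_i^{\curlywedge})(1)$ to scalars in $k1$ via Lemma~\ref{lem:ktoA} and then use unitality of $m_A$.
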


\begin{proof}
For $i =1,2$, let $H_i^{\curlywedge}$ be a handlebody-tangle horned to $H_i$.
Then $H_1\#H_2=\mcap\circ(H_1^{\curlywedge}\otimes H_2^{\curlywedge})$.
Since $\varepsilon:A\to k$ is an algebra map, we have
\[ v_A(H_1\#H_2)=\varepsilon\circ m_A(F_A(H_1^{\curlywedge})(1)
\otimes F_A(H_2^{\curlywedge})(1))=v_A(H_1)v_A(H_2). \]
\end{proof}

Given a handlebody-link (or more generally, a handlebody-tangle) $H$,
let $H^*$ denote its mirror image.
Let us evaluate $v_A(H^*)$.
Let $A^{op}$ denote the Hopf algebra $A$ with the opposite product;
it has $S^{-1}$ as its antipode.
We can and we do choose the same $\lambda$ as the original one
as a non-zero left integral in $(A^{op})^*$.

\begin{proposition} \label{prop:mirror-image-knot}
For a handlebody-link $H$, we have
\[ v_A(H^*) = v_{A^{op}}(H). \]
\end{proposition}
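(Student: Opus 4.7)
The strategy is to invoke the uniqueness from Proposition~\ref{prop:F_A2} to match two braided functors on $\mathcal{T}$. The mirror image operation $H \mapsto H^*$ can be described on the tangle level as a strict tensor endofunctor of $\mathcal{T}$ that fixes the generators $\cap$, $\cup$, $\curlywedge$ and interchanges $X$ with $\overline{X}$; this is immediate from the geometric description of mirror reflection. Consequently, $F_A(H^*)$ is computed by the same diagrammatic recipe as $F_A(H)$, but with every occurrence of $c_{A,A}$ replaced by $c_{A,A}^{-1}$ and conversely.

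Next, I would produce a braided tensor equivalence
\[
\Phi: {}_{A^{op}}^{A^{op}}\mathcal{YD} \xrightarrow{\sim} \bigl({}_A^A\mathcal{YD}\bigr)^{\mathrm{rev}},
\]
where the target is ${}_A^A\mathcal{YD}$ with its braiding replaced by the inverse braiding. On underlying vector spaces this equivalence is essentially the identity; a left $A^{op}$-module structure is converted into a left $A$-module structure by twisting with the antipode, and analogously for comodules, in a way that exchanges $c$ and $c^{-1}$. Under $\Phi$, the quantum-commutative quantum-symmetric algebra $A^{op}$ corresponds to $A$ with its original structure maps $(m_A, \mathrm{ev}_A, \mathrm{coev}_A)$. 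The matching of $\mathrm{ev}_{A^{op}}$ with $\mathrm{ev}_A$ uses that $\lambda$ is simultaneously a left integral in $A^*$ and in $(A^{op})^*$, while the matching of $\mathrm{coev}_{A^{op}}$ with $\mathrm{coev}_A$ combines both parts of Lemma~\ref{lem:Ulambda}, together with the fact that $S^{-1}$ swaps left and right integrals in $A$ (Proposition~\ref{prop:integral-integral}).

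Granting $\Phi$, the composite $\Phi \circ F_{A^{op}}$ is a braided almost strict tensor functor $\mathcal{T} \to ({}_A^A\mathcal{YD})^{\mathrm{rev}}$ that agrees on the generators $\curlywedge$, $\cap$, $\cup$ with $F_A$ (regarded in the reverse-braided category), sending $X$ to $c_{A,A}^{-1}$ and $\overline{X}$ to $c_{A,A}$. By the uniqueness clause of Proposition~\ref{prop:F_A2} applied in the reverse-braided target, this forces $\Phi \circ F_{A^{op}} = F_A \circ (-)^*$. Evaluating on any handlebody-link yields a scalar in $\mathrm{End}(I) = k$, and the definition $v_A(H) = \varepsilon \circ F_A(H^{\curlywedge})(1)$ then gives $v_A(H^*) = v_{A^{op}}(H)$.

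The main obstacle I anticipate is the clean construction of the equivalence $\Phi$ and the verification that it matches the quantum-commutative quantum-symmetric structure on $A^{op}$ with that on $A$. In particular, one must carefully track how the Yetter--Drinfeld compatibility for $A^{op}$ (with its antipode $S^{-1}$) translates across reversal of the braiding, and how the two expressions for $U_\lambda$ in Lemma~\ref{lem:Ulambda} correspond under the switch $A \leftrightarrow A^{op}$; the quantum-commutativity established in Lemma~\ref{lem:YD-alg} will be used to reconcile the multiplications $m_A$ and $m_{A^{op}}$ under $\Phi$.
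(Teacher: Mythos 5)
Your overall strategy (reduce to a statement about braided tensor functors and invoke the uniqueness in Proposition~\ref{prop:F_A2}) is reasonable, and your description of the mirror image as the strict tensor endofunctor of $\mathcal{T}$ fixing $\cap,\cup,\curlywedge$ and exchanging $X$ with $\overline{X}$ is a legitimate model of $(-)^*$ on diagrams. The genuine gap is the equivalence $\Phi:{}_{A^{op}}^{A^{op}}\mathcal{YD}\to\bigl({}_A^A\mathcal{YD}\bigr)^{\mathrm{rev}}$ as you describe it: a functor that is ``essentially the identity on underlying vector spaces,'' merely twisting the (co)module structures by the antipode, cannot be braided into the reverse category. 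Indeed, the braiding of ${}_{A^{op}}^{A^{op}}\mathcal{YD}$ sends $v\otimes w$ to $v_{(-1)}w\otimes v_{(0)}$, so the first output factor depends on \emph{both} $v$ and $w$ (via the coaction of the first argument acting on the second), whereas the reverse braiding $c_{W,V}^{-1}(v\otimes w)=w_{(0)}\otimes S^{-1}(w_{(-1)})v$ has first output factor depending on $w$ alone. No re-choice of action and coaction on the same underlying spaces can convert one shape into the other; already for $A=kG$ one map is $g\otimes h\mapsto g^{-1}hg\otimes g$ and the other is $g\otimes h\mapsto h\otimes h^{-1}gh$. Any honest comparison between $A^{op}$-structures and $A$-structures here must also \emph{reverse the order of the tensor factors} (a left $A^{op}$-module is a right $A$-module, and converting back to a left $A$-module via $S$ is monoidal only up to flipping $\Delta$), so your $\Phi$ needs to land in the category with the opposite tensor product as well, at which point the clean appeal to the uniqueness clause of Proposition~\ref{prop:F_A2} no longer applies verbatim.

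This tensor-factor reversal is exactly what the paper builds in explicitly: it introduces the order-reversing isomorphisms $\tau_n:A^{\otimes n}\to(A^{op})^{\otimes n}$, proves the intertwining identity $\tau_n\circ F_A(T^*)=F_{A^{op}}(T)\circ\tau_m$ for all tangles by checking it on the five generators of Proposition~\ref{prop:new-relations} (Proposition~\ref{prop:mirror-image-tangle}), and then specializes to $m=0$, $n=1$ using $\varepsilon\circ\tau_1=\varepsilon$. If you want to keep your functorial phrasing, you would have to package the $\tau_n$ into a (tensor-reversing) comparison and still verify the generator identities for $X$, $\overline{X}$, $\cap$, $\cup$, $\curlywedge$ --- which is precisely the computation your proposal defers. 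As written, the key object of your argument does not exist, so the proof is not complete.
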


We prove this in a generalized situation.
Let $\tau_1:A\overset{\simeq}{\longrightarrow}A^{op}$, $\tau_1(a)=a^{op}$
denote the canonical linear isomorphism, so that $a^{op}b^{op}=(ba)^{op}$,
where $a,b\in A$.
Let $\tau_0:k\to k$ be the identity map.
For $n>1$, let
$\tau_n:A^{\otimes n}\overset{\simeq}{\longrightarrow}(A^{op})^{\otimes n}$
be the linear isomorphism defined by
\[ \tau_n(a_1\otimes a_2\otimes\dots\otimes a_n)
=a_n^{op}\otimes\dots\otimes a_2^{op}\otimes a_1^{op}, ~~~ a_i\in A. \]

\begin{proposition} \label{prop:mirror-image-tangle}
Let $T$ be a handlebody-tangle such that $s(T)=m$, $b(T)=n$.
Then we have
\[ \tau_n\circ F_A(T^*)=F_{A^{op}}(T)\circ\tau_m. \]
\end{proposition}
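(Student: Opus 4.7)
The plan is to verify the identity on the five generators $\cap,\cup,\curlywedge,X,\overline{X}$ of $\mathcal{T}$ given in Proposition~\ref{prop:new-relations}, and then extend to all of $\mathcal{T}$ by functoriality. I would first record the effect of mirroring on these generators: $\cap^*=\cap$, $\cup^*=\cup$, $\curlywedge^*=\curlywedge$, $X^*=\overline{X}$, $\overline{X}^*=X$, and, since the mirror reverses the horizontal direction, $(T_1\circ T_2)^*=T_1^*\circ T_2^*$ while $(T_1\otimes T_2)^*=T_2^*\otimes T_1^*$. Setting $F'(T):=\tau_{b(T)}\circ F_A(T^*)\circ \tau_{s(T)}^{-1}$, a routine check using the reversal formula $\tau_{n+n'}(x\otimes y)=\tau_{n'}(y)\otimes\tau_n(x)$ for $x\in A^{\otimes n}$, $y\in A^{\otimes n'}$, together with the compatibility $(T_1\otimes T_2)^*=T_2^*\otimes T_1^*$, shows that $F'$ is an almost strict tensor functor from $\mathcal{T}$ to the category of $k$-vector spaces. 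By Proposition~\ref{prop:new-relations} it then suffices to show $F'=F_{A^{op}}$ on the five generators.

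For $\cap$ and $\curlywedge$, agreement is immediate from $\lambda((ba)^{op})=\lambda(ba)$ and $a^{op}\cdot_{op} b^{op}=(ba)^{op}$, where $\cdot_{op}$ denotes the product of $A^{op}$. For $\cup$, the claim reduces to $\tau_2(U_\lambda)=U_\lambda^{op}$: if $(\alpha_i)$, $(\beta_i)$ are dual bases of $A$ with respect to $\langle a,b\rangle_\lambda=\lambda(ab)$, then $(\beta_i^{op})$, $(\alpha_i^{op})$ are dual bases of $A^{op}$ with respect to the analogous form, since $\lambda(\beta_i^{op}\cdot_{op}\alpha_j^{op})=\lambda(\alpha_j\beta_i)=\delta_{ij}$; hence $U_\lambda^{op}=\sum_i\alpha_i^{op}\otimes\beta_i^{op}=\tau_2(U_\lambda)$.

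The main obstacle is the braiding case $X$ (and symmetrically $\overline{X}$), which requires unfolding the Yetter--Drinfeld structures. Using the standard identity $\Delta\circ S^{-1}=(S^{-1}\otimes S^{-1})\circ\Delta^{op}$ together with the conjugate action and coproduct on $A$, one computes
\[ c_{A,A}^{-1}(a\otimes b)=b_{(3)}\otimes S^{-1}(b_{(2)})\,a\,b_{(1)}, \]
so that $\tau_2\circ c_{A,A}^{-1}(a\otimes b)=(S^{-1}(b_{(2)})\,a\,b_{(1)})^{op}\otimes b_{(3)}^{op}$. On the $A^{op}$ side, the antipode is $S^{-1}$ while the coproduct is unchanged; unpacking the action definition shows the conjugate action takes the form $a^{op}\triangleright b^{op}=(S^{-1}(a_{(2)})\,b\,a_{(1)})^{op}$, and a direct computation of the Yetter--Drinfeld braiding then yields
\[ c_{A^{op},A^{op}}(b^{op}\otimes a^{op})=(S^{-1}(b_{(2)})\,a\,b_{(1)})^{op}\otimes b_{(3)}^{op}. \]
The two expressions coincide, so $F'(X)=F_{A^{op}}(X)$, and the case of $\overline{X}$ is entirely analogous. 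This completes the verification on generators and hence the proof.
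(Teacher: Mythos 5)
Your proof is correct and follows essentially the same route as the paper: reduce to the five generators of Proposition~\ref{prop:new-relations} via compatibility of the mirror with composition and (order-reversed) tensor product, then verify each generator, with the braiding case handled by exactly the computation $\tau_2\circ c_{A,A}^{-1}(a\otimes b)=(S^{-1}(b_{(2)})\,a\,b_{(1)})^{op}\otimes b_{(3)}^{op}=c_{A^{op},A^{op}}(b^{op}\otimes a^{op})$ and the $\cup$ case by the dual-bases argument. You are in fact slightly more explicit than the paper, which only writes out the $X$ and $\cup$ cases and leaves the functoriality reduction implicit.
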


\begin{proof}
By Proposition~\ref{prop:new-relations}, we may suppose that $T$ is one
of the five tangles listed there.

Suppose $T=X$.
Then $T^*=\overline{X}$.
An element $a\otimes b$ in $A\otimes A$ is sent by
$\tau_2\circ F_A(\overline{X})$ to
$(S^{-1}(b_{(2)})ab_{(1)})^{op}\otimes b_{(3)}^{op}$, while it is sent
by $F_{A^{op}}(X)\circ \tau_2$ to
$b_{(1)}^{op}a^{op}S^{-1}(b_{(2)}^{op})\otimes b_{(3)}^{op}$.
Obviously, the two results coincide.

Suppose $T=\cup$.
Then $T^*=\cup$.
If $(\alpha_i)$, $(\beta_i)$ are the dual bases with respect to
$\langle~,~\rangle_\lambda:A\times A\to k$, that is,
$\lambda(\alpha_i\beta_j)=\delta_{ij}$, then $(\beta_i^{op})$,
$(\alpha_i^{op})$ are the dual bases with respect to
$\langle~,~\rangle_\lambda:A^{op}\times A^{op}\to k$.
This implies that
$\tau_2\circ F_A(\cup)(1)=F_{A^{op}}(\cup)\circ\tau_0(1)$.

Similarly, the desired results follow in the remaining three cases.
\end{proof}

Since $\varepsilon\circ\tau_1=\varepsilon$,
Proposition~\ref{prop:mirror-image-knot} follows from
Proposition~\ref{prop:mirror-image-tangle} in the special situation when
$m=0,n=1$.

\section{First examples of unimodular Hopf algebras}

We raise below three examples of finite-dimensional unimodular Hopf algebras $A$,
giving their data needed to compute the invariants $v_A(H)$.
The duals $A^*$ are all unimodular, and the antipodes $S$ of $A$ are involutions.
It follows by Theorem~\ref{thm:cosemisimple} that if $\operatorname{char}k \nmid \operatorname{dim}A$,
then $A$ is semisimple and cosemisimple.
We choose two-sided integrals $\lambda$ in $A^*$ and $\Lambda$ in $A$ such that $\lambda(\Lambda)=1$.

\begin{example}\label{ex:Hopf-alg1}
Let $A=kG$ be the group algebra, where $G$ is a finite group.
The Hopf algebra structure is given by
\begin{align*}
&\Delta(g)=g\otimes g, &
&\varepsilon(g)=1, &
&S(g)=g^{-1}
\end{align*}
where $g\in G$.
We have
\begin{align*}
&c_{A,A}(g\otimes h)=ghg^{-1}\otimes g, &
&\lambda(g)=\delta_{1,g}, &
&\Lambda=\sum_{g\in G}g,
\end{align*}
\begin{align*}
&\mathrm{ev}_A(g \otimes h)=\delta_{1,gh}, &
&\mathrm{coev}_A(1)=\sum_{g\in G}g\otimes g^{-1},
\end{align*}
where $g,h\in G$.

Note $\lambda(1)=1$ and that this $A$ is cosemisimple.
By Maschke's Theorem, $A$ is semisimple if and only if
$\operatorname{char}k$ does not divide the order $|G|$ of $G$.

We remark that if $\operatorname{char}k=0$,
then the invariant $v_A(H)$ coincides with the number of the homomorphisms
from the fundamental group of the exterior of a handlebody-knot $H$ to the group $G$.
For, when we regard the value of the invariant as a state sum,
each state corresponds to the $G$-coloring of the diagram.
\end{example}

\begin{example}\label{ex:Hopf-alg2}
Let $A=D(kG)$ be the quantum double of $kG$, where $G$ is a finite
group.
Note that the dual Hopf algebra $(kG)^*$ of $kG$ is spanned by those
orthogonal idempotents $e_g$, $g\in G$, which are defined by
$e_g(h)=\delta_{g,h}$, where $g,h\in G$.
As a coalgebra, $A=kG\otimes(kG)^*$, and so
\begin{align*}
&\Delta(a\otimes e_g)=\sum_{h\in G}(a\otimes h)\otimes(a\otimes h^{-1}g), &
&\varepsilon(a\otimes e_g)=\delta_{1,g},
\end{align*}
where $a,g\in G$.
The product and the antipode on $A$ are given by
\[ (a\otimes e_g)(b\otimes e_h)=\delta_{g,bhb^{-1}}ab\otimes e_h, \]
\[ S(a\otimes e_g)=(1\otimes e_{g^{-1}})(a^{-1}\otimes 1)
=a^{-1}\otimes e_{ag^{-1}a^{-1}}, \]
where $a,b,g,h\in G$.
The unit equals $1\otimes1$.
The remaining data are given by
\[ c_{A,A}((a\otimes e_g)\otimes(b\otimes e_h))
=(aba^{-1}\otimes e_{aha^{-1}})\otimes(a\otimes e_{hbh^{-1}b^{-1}g}), \]
\begin{align*}
&\lambda(a\otimes e_g)=\delta_{1,a}, &
&\Lambda=\sum_{a\in G}a\otimes e_1,
\end{align*}
\[ \mathrm{ev}_A((a\otimes e_g)\otimes(b\otimes e_h))
=\delta_{1,a}\delta_{g,bhb^{-1}}, \]
\[ \mathrm{coev}_A(1\otimes1)
=\sum_{a,g\in G}(a\otimes e_g)\otimes(a^{-1}\otimes e_{aga^{-1}}). \]

Note $\lambda(1)=|G|1$.
This $A$ is semisimple if and only if it is cosemisimple
if and only if $\operatorname{char}k \nmid |G|$.
If these equivalent conditions hold, we should replace the integrals $\lambda$,
$\Lambda$ above with $|G|^{-1}\lambda$, $|G|\Lambda$, respectively.
\end{example}

\begin{example}\label{ex:Hopf-alg3}
Assume that the characteristic $\operatorname{char}k$ of $k$ is not 2.
Fix an integer $m>2$.
Let $A=\mathcal{B}_{4m}$ be the Hopf algebra as defined
by~\cite[Definition 3.3(2)]{Masuoka00}.
As an algebra this is generated by three elements, $a,t,z$, and is
defined by the relations
\begin{align*}
&a^2=t^2=1, &&ta=at, &&za=az, &&z^m=a, &&zt=tz^{-1}.
\end{align*}
Here we have re-chosen the generators $s_{\pm1}$ given in~\cite[Definition 3.3(2)]{Masuoka00}
so that $t=s_+$, $z=s_+s_-$, as in~\cite[Page 203, line --3]{Masuoka00}.
Note $z^{-1}=az^{m-1}$.
Set $e_0=(1/2)(1+a)$, $e_1=(1/2)(1-a)$; these are central idempotents in
$A$ such that $e_0e_1=0$, $e_0+e_1=1$.
The structure on $A$ is given by
\begin{align*}
\Delta(a)&=a\otimes a, &\varepsilon(a)&=1, &S(a)&=a, \\
\Delta(t)&=t\otimes e_0t+tz\otimes e_1t, &\varepsilon(t)&=1, &S(t)&=t(e_0+e_1z), \\
\Delta(z)&=z\otimes e_0z+z^{-1}\otimes e_1z, &\varepsilon(z)&=1, &S(z)&=e_0z^{-1}+e_1z.
\end{align*}
This $A$ has $(a^it^jz^k)_{0\le i,j<2,0\le k<m}$ as a basis, so that
$\dim A=4m$.
Note that $(e_it^jz^k)_{0\le i,j<2,0\le k<m}$ is another basis of $A$.
Let $0\le i,j,p,q<2$, $0\le k,r<m$.
Set
\[ d(i,j,k,p,q,r)=(-1)^j\{r-(-1)^{i+p}(2k-j)q\}+jq. \]
Then we have
\[ c_{A,A}(e_it^jz^k\otimes e_pt^qz^r)
=e_pt^q z^{d(i,j,k,p,q,r)}\otimes e_it^jz^k. \]
Note that if $q=0$ in particular, then
\[ c_{A,A}(e_it^jz^k\otimes e_pz^r)=e_pz^{(-1)^jr}\otimes e_it^jz^k. \]
The remaining data are given by
\begin{align*}
&\lambda(a^it^jz^k)=\delta_{(i,j,k),(0,0,0)}, &
&\Lambda=(1+a)(1+t)(1+z+\dots+z^{m-2}+az^{m-1}),
\end{align*}
\[ \mathrm{ev}_A(a^it^jz^k\otimes a^pt^qz^r)
=\delta_{(i,j,k),(p,q,r)}(\delta_{(j,k),(0,0)}+\delta_{j,1})
+\delta_{(i,j,k),(1-p,q,m-r)}\delta_{j,0}, \]
\[ \mathrm{coev}_A(1)
=\sum_{0\le i<2}a^i\otimes a^i+\sum_{\substack{0\le i<2\\ 0\le k<m}}a^itz^k\otimes a^itz^k
+\sum_{\substack{0\le i<2\\0\le k<m}}a^iz^k\otimes a^{i+1}z^{m-k}. \]
It is easy to represent these data with respect to the other basis
$(e_it^jz^k)_{0\le i,j<2,0\le k<m}$.

Note $\lambda(1)=1$, and that this $A$ is cosemisimple.
It is known that $A$ is semisimple if and only if $\operatorname{char}k\nmid 2m$.
Moreover, if $k$ contains a primitive $4m$-th root of 1,
then $A$ is selfdual, that is, $A\simeq A^*$ as Hopf algebras.

Table~\ref{tab:Kac} lists the invariant $v_A(H)$ for the handlebody-knots $0_1,\ldots,6_{16}$
in the table given in~\cite{IshiiKishimotoMoriuchiSuzuki12} when $m=3,\ldots,7$.

\begin{table}
\begin{center}
\begin{tabular}{|c||r|r|r|r|r|}
\hline
& $m=3$ & $m=4$ & $m=5$ & $m=6$ & $m=7$ \\
\hline
$0_1$ & $144$ & $256$ & $400$ & $576$ & $784$ \\
\hline
$4_1$ & $216$ & $256$ & $400$ & $864$ & $784$ \\
\hline
$5_1$ & $144$ & $256$ & $400$ & $576$ & $784$ \\
\hline
$5_2$ & $216$ & $256$ & $400$ & $864$ & $784$ \\
\hline
$5_3$ & $144$ & $256$ & $400$ & $576$ & $784$ \\
\hline
$5_4$ & $144$ & $256$ & $400$ & $576$ & $784$ \\
\hline
$6_1$ & $144$ & $256$ & $400$ & $576$ & $784$ \\
\hline
$6_2$ & $144$ & $256$ & $400$ & $576$ & $784$ \\
\hline
$6_3$ & $144$ & $256$ & $400$ & $576$ & $784$ \\
\hline
$6_4$ & $144$ & $256$ & $400$ & $576$ & $784$ \\
\hline
$6_5$ & $144$ & $256$ & $400$ & $576$ & $784$ \\
\hline
$6_6$ & $144$ & $256$ & $400$ & $576$ & $784$ \\
\hline
$6_7$ & $144$ & $256$ & $800$ & $576$ & $784$ \\
\hline
$6_8$ & $144$ & $256$ & $400$ & $576$ & $784$ \\
\hline
$6_9$ & $216$ & $256$ & $400$ & $864$ & $784$ \\
\hline
$6_{10}$ & $144$ & $256$ & $400$ & $576$ & $784$ \\
\hline
$6_{11}$ & $144$ & $256$ & $400$ & $576$ & $784$ \\
\hline
$6_{12}$ & $144$ & $256$ & $800$ & $576$ & $784$ \\
\hline
$6_{13}$ & $216$ & $256$ & $400$ & $864$ & $784$ \\
\hline
$6_{14}$ & $288$ & $256$ & $400$ & $1152$ & $784$ \\
\hline
$6_{15}$ & $288$ & $256$ & $400$ & $1152$ & $784$ \\
\hline
$6_{16}$ & $144$ & $256$ & $400$ & $576$ & $784$ \\
\hline
\end{tabular}
\end{center}
\caption{$\mathcal{B}_{4m}$}
\label{tab:Kac}
\end{table}
\end{example}

\section{The invariants derived from the finite quantum group $\overline{U}_q$}

Recall from \cite[Sect.VI.5]{Kassel95} the finite quantum group
$\overline{U}_q$ associated to $sl_2$.
Let $q\in k\setminus\{\pm1\}$ be a root of 1,
and let $e$ ($>1$) denote the order of $q^2$.
As an algebra, $\overline{U}_q$ is generated by $K$, $E$ and $F$,
and is defined by the relations
\begin{align*}
&KE=q^2EK, & &KF=q^{-2}FK, & &EF-FE=\frac{K-K^{-1}}{q-q^{-1}}, \\
&K^e=1, & &E^e=F^e=0.
\end{align*}
This $\overline{U}_q$ is a Hopf algebra with respect to the structure
\begin{align*}
\Delta(K)&=K\otimes K, & \varepsilon(K)&=1, & S(K)&=K^{-1}, \\
\Delta(E)&=1\otimes E+E\otimes K, & \varepsilon(E)&=0, & S(E)&=-EK^{-1},\\
\Delta(F)&=K^{-1}\otimes F+ F\otimes 1, & \varepsilon(F)&=0, & S(F)&=-K F.
\end{align*}

To apply results of Radford~\cite{Radford94b},
it is convenient to replace the generators above with
\begin{align*}
&a=K, & &x=\frac{1}{q-q^{-1}}FK, & &y=E.
\end{align*}
Then the defining relations turn into
\begin{align*}
&xa=q^2ax, & &ya=q^{-2}ay, & &yx-q^{-2}xy=a^2-1, & &a^e=1, & &x^e=y^e=0.
\end{align*}
We have
\begin{align*}
&\Delta(x)=1\otimes x+x\otimes a, &
&\varepsilon(x)=0, &
&S(x) = -xa^{-1}.
\end{align*}
The Hopf algebra $\overline{U}_q$ thus presented coincides with
Radford's $U_{(N, \nu, \omega)}$ in the special situation when $N=e$,
$\nu=1$ and $\omega=q^2$; see \cite[Sect.5.2]{Radford94b}.
We remark that the $q$ in \cite{Radford94b} should read our $q^2$.
The first three parts of the following proposition are proved in
Propositions~10, 11 of \cite{Radford94b}.

\begin{proposition}\label{prop:Uq}
\begin{itemize}
\item[(1)]
$(a^ix^jy^k)_{0\le i,j,k<e}$ is a basis of $\overline{U}_q$, so that
$\dim\overline{U}_q=e^3$.
\item[(2)]
$\overline{U}_q$ is unimodular, and
\[ \Lambda=\left(\sum_{i=0}^{e-1}a^i\right)x^{e-1}y^{e-1} \]
is a non-zero two-sided integral in $\overline{U}_q$.
\item[(3)]
The elements $\lambda$, $\lambda'$ of $\overline{U}_q^*$ defined by
\begin{align*}
&\lambda(a^ix^jy^k)=\delta_{(i,j,k),(0,e-1,e-1)}, &
&\lambda'(a^ix^jy^k)=\delta_{(i,j,k),(2,e-1,e-1)},
\end{align*}
where $0\le i,j,k<e$, are the left and the right integrals,
respectively, in $\overline{U}_q^*$ such that
$\lambda(\Lambda)=1=\lambda'(\Lambda)$.
It follows that $\overline{U}_q$ is not cosemisimple.
\item[(4)] 
$\overline{U}_q$ satisfies the assumption of Lemma~\ref{lem:double-horns}.
\end{itemize}
\end{proposition}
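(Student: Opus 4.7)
The plan is to verify the assumption \eqref{eq:assumption}, namely $\lambda(z) = \lambda(S(z))$ for $z \in Z(\overline{U}_q)$, in three steps.

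First, I would install the grading. A direct check from the defining relations gives $S^2 = \operatorname{Ad}_K$, so $S$ preserves the $\operatorname{Ad}_K$-eigenspace decomposition $\overline{U}_q = \bigoplus_n A_n$ where $A_n = \{h : KhK^{-1} = q^{2n}h\}$, and $S^2$ is the identity on $Z(\overline{U}_q) \subset A_0$. Since $a \in A_0$, $x \in A_{-1}$, $y \in A_1$, the set $\{a^i x^j y^j : 0 \le i,j < e\}$ is a PBW basis of $A_0$. Moreover, Part~(3) shows that both $\lambda$ and $\lambda'$ vanish outside $A_0$, with $\lambda(a^i x^j y^j) = \delta_{i,0}\delta_{j,e-1}$ and $\lambda'(a^i x^j y^j) = \delta_{i,2}\delta_{j,e-1}$.

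Second, I would identify $\lambda \circ S$ with $\lambda'$. As $\lambda\circ S$ is automatically a right integral in $\overline{U}_q^*$ and $I_r(\overline{U}_q^*)$ is one-dimensional, it suffices to check coincidence at one element on which $\lambda'$ does not vanish. Using $S(a)=a^{-1}$, $S(x)=-xa^{-1}$, $S(y)=-ya^{-1}$ and the commutation rules, one computes
\[ S(a^i x^{e-1}y^{e-1}) = a^{2-i}x^{e-1}y^{e-1} + (\text{terms of lower } xy\text{-degree}), \]
whence $\lambda(S(a^i x^{e-1}y^{e-1})) = \delta_{i,2}$ and therefore $\lambda\circ S = \lambda'$. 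Writing a central element as $z = \sum_{i,j}c_{ij}a^ix^jy^j$, the identity \eqref{eq:assumption} now reduces to the single relation $c_{0,e-1} = c_{2,e-1}$.

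The hard part is extracting this relation from the centrality $[z,x]=0$. By induction from $yx - q^{-2}xy = a^2 - 1$ one obtains the $q$-commutation formula
\[ y^j x = q^{-2j}xy^j + \tfrac{1-q^{-2j}}{1-q^{-2}}\bigl(q^{-2(j-1)}a^2 - 1\bigr)y^{j-1}. \]
Substituting this into $zx - xz$, expanding in the PBW basis, and collecting the coefficient of $a^i x^{e-1}y^{e-2}$ --- using $q^{2e}=1$ so that the quantum integer $\tfrac{1-q^{-2(e-1)}}{1-q^{-2}}$ equals $-q^2$ --- produces the relation
\[ (q^4 - q^{2i})\,c_{i,e-2} + q^2\,(c_{i,e-1} - c_{i-2,e-1}) = 0. \]
The crucial move is the specialisation $i = 2$: this kills the factor $q^4 - q^{2i}$, eliminates the unknown $c_{2,e-2}$, and delivers exactly $c_{0,e-1} = c_{2,e-1}$. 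Spotting this particular monomial is the subtle point, since generic coefficients of $[z,x]$ entangle many of the $c_{ij}$ simultaneously, and only this specialisation decouples the pair we need.
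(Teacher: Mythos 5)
Your proposal is correct and follows essentially the same route as the paper: write a central element in the degree-zero PBW basis $\sum_{i,j}c_{ij}a^ix^jy^j$, identify $\lambda\circ S=\lambda'$ so that \eqref{eq:assumption} reduces to $c_{0,e-1}=c_{2,e-1}$, and extract that single relation from centrality via a $q$-commutation formula evaluated at one well-chosen PBW coefficient. The only (immaterial) differences are that the paper gets $\lambda\circ S=\lambda'$ from the uniqueness of integrals together with $\lambda(\Lambda)=\lambda(S(\Lambda))$ rather than by expanding $S$ on top monomials, and it uses $[y,z]=0$ and the coefficient of $a^2x^{e-2}y^{e-1}$ where you use $[z,x]=0$ and the coefficient of $a^2x^{e-1}y^{e-2}$.
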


\begin{proof}
Let us prove Part 4.
Since $\lambda(\Lambda)=\lambda'(\Lambda)$ by Part 3,
we see from Proposition~\ref{prop:integral-integral} that $\lambda\circ S=\lambda'$.
Let $z\in Z(A)$.
Since $z$ commutes with $a$, we have
$z=\sum_{i,j=0}^{e-1}c_{ij}a^ix^jy^j$ with $c_{ij}\in k$.
To prove $\lambda(z)=\lambda'(z)$, we wish to show that
$c_{0,e-1}=c_{2,e-1}$.
The formula given in \cite[p.256, lines 2--3]{Radford94b} tells us that
for each $0<j<e$,
\begin{align}
yx^j=q^{-2j}x^jy+(j)_{q^2}a^2x^{j-1}-q^{-2(j-1)}(j)_{q^2}x^{j-1}, \label{eq:yx}
\end{align}
where $(j)_{q^2}=\sum_{t=0}^{j-1}q^{2t}$.
This implies that the term $a^2x^{e-2}y^{e-1}$ in $yz$ arises from the
product of $y$ with the terms $a^2x^{e-2}y^{e-2}$, $x^{e-1}y^{e-1}$,
$a^2x^{e-1}y^{e-1}$ in $z$.
It follows that the coefficient of $a^2x^{e-2}y^{e-1}$ in $yz$ equals
\[ c_{2,e-2}q^{-2e}+c_{0,e-1}(e-1)_{q^2}-c_{2,e-1}q^{-2e}(e-1)_{q^2}
=c_{2,e-2}+(c_{0,e-1}-c_{2,e-1})(e-1)_{q^2}, \]
while the same coefficient in $zy$ equals $c_{2,e-2}$.
This proves $c_{0,e-1}=c_{2,e-1}$, as desired.
\end{proof}

In what follows we suppose that the base field $k$ is the field
$\mathbb{C}$ of complex numbers.
Hence, $q^{-1}$ equals the complex conjugate $\overline{q}$ of $q$.
We re-choose $\Lambda$, $\lambda$ given in Proposition~\ref{prop:Uq}
(2), (3) so that the derived invariants behave preferably with mirror images.
For $q$ as above, we define complex numbers $c_q$, $\epsilon_q$ by
\begin{align*}
&c_q=\overline{q}^2(q-\overline{q})^{e-1}, &
&\epsilon_q = q^e. 
\end{align*}
Note that $\epsilon_q=1$ if the order $\operatorname{ord}q$ of $q$ is
odd, and $\epsilon_q=-1$ if $\operatorname{ord}q$ is even, and so that
$\epsilon_q=\epsilon_{\overline{q}}$.
We define
\begin{align*}
&\Lambda_q=c_q\Lambda, &
&\lambda_q=c_q^{-1}\lambda.
\end{align*}
One sees that $\Lambda_q$ is a two-sided integral in $\overline{U}_q$,
and $\lambda_q\in I_l(\overline{U}_q^*)$ with $\lambda_q(\Lambda_q)=1$.

\begin{lemma} 
We have
\[ \Lambda_q=\epsilon_q^{e-1}\left(\sum_{i=0}^{e-1}K^i\right)F^{e-1}E^{e-1}. \]
\end{lemma}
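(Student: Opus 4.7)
The plan is a direct computation, translating the expression for $\Lambda$ from the generators $a, x, y$ back into the original generators $K, F, E$, and then simplifying.

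First, I would establish the formula
\[ x^n = \frac{q^{-n(n-1)}}{(q-q^{-1})^n}\, F^n K^n \quad (n \ge 0) \]
by induction on $n$. The base case $n = 0, 1$ is immediate from $x = \frac{1}{q-q^{-1}} FK$. For the inductive step, using $KF = q^{-2}FK$, one shows $K^n F = q^{-2n} FK^n$, which gives the extra factor $q^{-2n}$ when multiplying $x^n$ by $x$; combined with the previous exponent $-n(n-1)$ this yields $-n(n-1)-2n = -n(n+1)$, as required. Specializing to $n = e-1$ and noting $y^{e-1} = E^{e-1}$,
\[ x^{e-1} y^{e-1} = \frac{q^{-e(e-1)}}{(q-q^{-1})^{e-1}}\, F^{e-1} K^{e-1} E^{e-1}. \]

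Next I would push the factor $K^{e-1}$ past $F^{e-1}$ to the left. Iterating $F K = q^{2} K F$ gives $F^{e-1} K^{e-1} = q^{2(e-1)^2} K^{e-1} F^{e-1}$. Since $K^e = 1$, the sum $\sum_{i=0}^{e-1} K^i$ is invariant under multiplication by $K^{e-1}$ on the right, so
\[ \Lambda \;=\; \Bigl(\sum_{i=0}^{e-1} K^i\Bigr) x^{e-1} y^{e-1} \;=\; \frac{q^{-e(e-1)+2(e-1)^2}}{(q-q^{-1})^{e-1}} \Bigl(\sum_{i=0}^{e-1} K^i\Bigr) F^{e-1} E^{e-1}. \]
The exponent simplifies: $-e(e-1) + 2(e-1)^2 = (e-1)(e-2)$.

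Finally, I multiply by $c_q = q^{-2}(q-q^{-1})^{e-1}$ to obtain $\Lambda_q$. The $(q-q^{-1})^{e-1}$ factors cancel, leaving the scalar $q^{-2 + (e-1)(e-2)} = q^{e(e-3)}$. Using the crucial fact that $e$ is the order of $q^2$, so $q^{2e} = 1$, I rewrite $q^{e(e-3)} = q^{e(e-1)} \cdot q^{-2e} = q^{e(e-1)} = (q^e)^{e-1} = \epsilon_q^{e-1}$. This gives the desired equality. The only real obstacle is keeping the exponents of $q$ straight across the two commutation moves; no conceptual difficulty is involved once the substitution $x = (q-q^{-1})^{-1} FK$ is unwound.
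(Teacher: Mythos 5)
Your overall strategy is sound and is in fact the same computation as the paper's, run in the opposite direction: the paper starts from $\bigl(\sum_i K^i\bigr)F^{e-1}E^{e-1}$, substitutes $F=(q-q^{-1})xa^{-1}$, $E=y$, and reorders to reach $\Lambda$, while you start from $\Lambda$ and unwind $x=(q-q^{-1})^{-1}FK$. Either direction is fine, and your general formula $x^n=q^{-n(n-1)}(q-q^{-1})^{-n}F^nK^n$ and the commutation $F^{e-1}K^{e-1}=q^{2(e-1)^2}K^{e-1}F^{e-1}$ are both correct.

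The gap is in the specialization step: putting $n=e-1$ into $q^{-n(n-1)}$ gives $q^{-(e-1)(e-2)}$, not $q^{-e(e-1)}$ as you wrote (you have substituted $n=e-1$ into the exponent $-n(n+1)$ of $x^{n+1}$ rather than into that of $x^n$). The two exponents differ by $-2(e-1)\equiv 2 \pmod{2e}$, so your coefficient is off by $q^{2}$, and this propagates: with the correct exponent your own subsequent steps give $-(e-1)(e-2)+2(e-1)^2=e(e-1)$, and after multiplying by $c_q$ the scalar is $q^{e(e-1)-2}=q^{-2}\epsilon_q^{e-1}$ rather than $\epsilon_q^{e-1}$. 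That is, the computation carried out correctly yields $\Lambda_q=q^{-2}\epsilon_q^{e-1}\bigl(\sum_{i}K^i\bigr)F^{e-1}E^{e-1}$, with an extra factor $q^{-2}$ relative to the statement; the case $e=2$ confirms this, since there a direct check gives $\Lambda_q=(1+K)FE$ while the stated formula would give $-(1+K)FE$. The paper's own proof contains a parallel slip (it moves the $a^{-1}$'s in $(xa^{-1})^{e-1}$ to the left at the cost of $q^{-2\binom{e-1}{2}}$, whereas the correct cost is $q^{-2\binom{e}{2}}=q^{-e(e-1)}$), so the residual $q^{-2}$ appears to trace back to the normalization of $x$ or of $c_q$ in the text rather than to your method; nevertheless, as written your substitution is an arithmetic error, and it is only by virtue of that error that you land exactly on the stated formula.
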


\begin{proof}
This follows since one computes
\begin{align*}
&\left(\sum_{i=0}^{e-1}K^i\right)F^{e-1}E^{e-1}
=(q-\overline{q})^{e-1}\left(\sum_{i=0}^{e-1}a^i\right)(xa^{-1})^{e-1}y^{e-1} \\
&=q^{-2\binom{e-1}{2}}(q-\overline{q})^{e-1}\Lambda
=q^{-e(e-1)}q^{-2}(q-\overline{q})^{e-1}\Lambda
=\epsilon_q^{e-1}c_q\Lambda
=\epsilon_q^{e-1}\Lambda_q.
\end{align*}
\end{proof}

\begin{lemma}\label{lem:Uq-isom}
$K \mapsto K^{op}$, $E\mapsto E^{op}$ and $F\mapsto F^{op}$ give a Hopf
algebra isomorphism
$\overline{U}_{\overline{q}}\overset{\simeq}{\longrightarrow}\overline{U}_q^{op}$,
under which $\Lambda_{\overline{q}}\mapsto\Lambda_{q}^{op}$.
\end{lemma}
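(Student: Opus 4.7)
The plan is to verify, in two stages, that $\phi\colon K\mapsto K^{op}$, $E\mapsto E^{op}$, $F\mapsto F^{op}$ defines a Hopf algebra isomorphism $\overline{U}_{\overline{q}}\to\overline{U}_q^{op}$ under which $\Lambda_{\overline{q}}\mapsto\Lambda_q^{op}$. First I would verify $\phi$ on the defining relations of $\overline{U}_{\overline{q}}$. Since multiplication in $\overline{U}_q^{op}$ reverses that of $\overline{U}_q$ and $\overline{q}=q^{-1}$, the relation $KE=\overline{q}^2 EK$ becomes $K^{op}\cdot_{op}E^{op}=EK=q^{-2}KE=\overline{q}^2\,E^{op}\cdot_{op}K^{op}$; the $KF$ relation is analogous; and the commutator $EF-FE=(K-K^{-1})/(\overline{q}-\overline{q}^{-1})$ translates, via the sign introduced by reversing products in the denominator, to the corresponding relation in $\overline{U}_q$. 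The relations $K^e=1$ and $E^e=F^e=0$ are preserved trivially. Compatibility with $\Delta$ and $\varepsilon$ is immediate because $\overline{U}_q^{op}$ shares its coalgebra structure with $\overline{U}_q$ and the coproduct and counit on the generators involve no $q$. Antipode compatibility is automatic for a bialgebra map, and bijectivity follows from $K^{op},E^{op},F^{op}$ generating $\overline{U}_q^{op}$ together with $\dim\overline{U}_{\overline{q}}=\dim\overline{U}_q^{op}=e^3$.

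For the second stage I would apply $\phi$ to the formula $\Lambda_{\overline{q}}=\epsilon_{\overline{q}}^{e-1}\bigl(\sum_{i=0}^{e-1}K^i\bigr)F^{e-1}E^{e-1}$ supplied by the previous lemma. Since $\epsilon_{\overline{q}}=\epsilon_q$ (both being $\pm 1$ of the same sign), and since $\phi$ reverses products, the image, viewed as a vector in $\overline{U}_q$, equals $\epsilon_q^{e-1}E^{e-1}F^{e-1}\bigl(\sum_{i=0}^{e-1}K^i\bigr)$. By unimodularity the space of two-sided integrals in $\overline{U}_q$ is one-dimensional, and any Hopf algebra isomorphism carries integrals to integrals, so $\phi(\Lambda_{\overline{q}})=c\,\Lambda_q^{op}$ for some scalar $c$. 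To pin down $c=1$ I would compare the defining expressions $\Lambda_q=c_q\Lambda$ and $\Lambda_{\overline{q}}=c_{\overline{q}}\Lambda$, compute $\phi(x_{\overline{q}})$ (which turns out to be a scalar multiple of $x_q$ with an explicit $q$-power coefficient), and verify that the factors $c_q=\overline{q}^2(q-\overline{q})^{e-1}$ and $c_{\overline{q}}=q^2(\overline{q}-q)^{e-1}$ are calibrated precisely so that the accumulated $q$-powers collapse to $1$ modulo $q^{2e}=1$.

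The main obstacle is this last reconciliation of $q$-powers. After tracking the reversal of $\bigl(\sum a^i\bigr)x^{e-1}y^{e-1}\mapsto y^{e-1}x^{e-1}\bigl(\sum a^i\bigr)$ through $\phi$, the claim reduces to an identity of the form $\bigl(\sum a^i\bigr)x^{e-1}y^{e-1}=q^{N}\,y^{e-1}x^{e-1}\bigl(\sum a^i\bigr)$ in $\overline{U}_q$ for a specific exponent $N$ determined by the calibration above. This identity holds because both sides are nonzero two-sided integrals and hence proportional by uniqueness; the precise scalar can be pinned down by computing the leading coefficient in Radford's basis $(a^ix^jy^k)$ through iterated application of the commutation $yx=q^{-2}xy+K^2-1$, which is the only nontrivial bookkeeping required.
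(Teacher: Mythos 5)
Your proposal is correct and follows essentially the same route as the paper: verify the isomorphism on the generators and relations (the paper dismisses this as well known), then use the expression $\Lambda_{\overline q}=\epsilon_{\overline q}^{\,e-1}\bigl(\sum_{i=0}^{e-1}K^i\bigr)F^{e-1}E^{e-1}$ from the preceding lemma together with $\epsilon_{\overline q}=\epsilon_q$ to reduce the integral statement to the identity $E^{e-1}F^{e-1}\bigl(\sum_{i=0}^{e-1}K^i\bigr)=\bigl(\sum_{i=0}^{e-1}K^i\bigr)F^{e-1}E^{e-1}$ in $\overline{U}_q$, which both you and the paper settle via the commutation formula for $yx^j$ (the paper quotes it as \eqref{eq:yx} at $j=e-1$, equivalently your leading-coefficient bookkeeping; your appeal to one-dimensionality of the space of two-sided integrals is a legitimate way to justify that only the leading coefficient matters). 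Note only that once you invoke the $K,E,F$ formula for $\Lambda_{\overline q}$, the separate reconciliation of $c_q$ versus $c_{\overline q}$ and the computation of $\phi(x_{\overline q})$ are redundant, since that normalization is already absorbed into that formula.
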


\begin{proof}
It is well-known that the correspondences above gives a Hopf algebra
isomorphism.
To see that $\Lambda_{\overline{q}}\mapsto\Lambda_{q}^{op}$, it suffices
to prove that
\[ E^{e-1}F^{e-1}\left(\sum_{i=0}^{e-1}K^i\right)
=\left(\sum_{i=0}^{e-1}K^i\right)F^{e-1}E^{e-1} \]
in $\overline{U}_q$, since $\epsilon_{\overline{q}}=\epsilon_q$.
By \eqref{eq:yx} for $j=e-1$, we see
\[ y^{e-1}x^{e-1}\left(\sum_{i=0}^{e-1}a^i\right)
=\overline{q}^2\left(\sum_{i=0}^{e-1}a^i\right)x^{e-1}y^{e-1}. \]
By multiplying $a^{-(e-1)}=a$ from the right, it follows that
\[ y^{e-1}(xa^{-1})^{e-1}\left(\sum_{i=0}^{e-1}a^i\right)
=\left(\sum_{i=0}^{e-1}a^i\right)(x a^{-1})^{e-1}y^{e-1}, \]
which implies the desired equality.
\end{proof}

By Proposition~\ref{prop:Uq} (4), $\overline{U}_q$ together with
$\lambda_q$ defines the invariant $v_{\overline{U}_q}(H)$ for each
handlebody-knot $H$.
Let us write simply $v_q(H)$ for this.

\begin{proposition}
Given a handlebody-knot $H$, the invariant $v_q(H^*)$ of the
mirror image $H^*$ of $H$ equals the complex conjugate
$\overline{v_q(H)}$ of $v_q(H)$, that is, $v_q(H^*)=\overline{v_q(H)}$.
\end{proposition}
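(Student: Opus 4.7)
The plan is to establish the chain
\[
v_q(H^*) \,=\, v_{\overline{U}_q^{op}}(H) \,=\, v_{\overline{q}}(H) \,=\, \overline{v_q(H)}.
\]
The first equality is immediate from Proposition~\ref{prop:mirror-image-knot}, with the convention that $\lambda_q$ also serves as the left integral in $(\overline{U}_q^{op})^*$; this is legitimate because $\overline{U}_q$ and $\overline{U}_q^{op}$ have the same underlying coalgebra, and hence the same convolution algebra structure on the dual, so left integrals coincide as linear functionals.

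For the second equality, I would invoke the Hopf algebra isomorphism $\phi:\overline{U}_{\overline{q}}\overset{\simeq}{\longrightarrow}\overline{U}_q^{op}$ supplied by Lemma~\ref{lem:Uq-isom}, which sends $\Lambda_{\overline{q}}$ to $\Lambda_q^{op}$. The induced isomorphism $\phi^*$ on dual Hopf algebras sends $\lambda_q$ to a left integral in $\overline{U}_{\overline{q}}^{\,*}$; since $\phi^*(\lambda_q)(\Lambda_{\overline{q}})=\lambda_q(\Lambda_q)=1=\lambda_{\overline{q}}(\Lambda_{\overline{q}})$ and the space of left integrals is one-dimensional, we obtain $\phi^*(\lambda_q)=\lambda_{\overline{q}}$. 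Because the invariant depends only on the pair $(A,\lambda)$ up to Hopf-algebra isomorphism, this yields $v_{\overline{U}_q^{op}}(H)=v_{\overline{q}}(H)$.

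The third equality is a complex-conjugation symmetry. By construction, $v_q(H)$ is a finite sum of products of matrix coefficients, taken in the basis $(K^iF^jE^k)$, of the structure morphisms $m_A$, $c_{A,A}$, $\mathrm{ev}_A$, $\mathrm{coev}_A$. Inspection of the defining relations of $\overline{U}_q$, the coproduct formulas, the Yetter--Drinfeld braiding formula, and the expressions $\Lambda_q=c_q\Lambda$, $\lambda_q=c_q^{-1}\lambda$ shows that each such coefficient is a Laurent polynomial in $q$, $\overline{q}$, $c_q$, $c_q^{-1}$ with rational coefficients. Since $c_q=\overline{q}^{\,2}(q-\overline{q})^{e-1}$ is designed so that $\overline{c_q}=c_{\overline{q}}$ (a short check), complex-conjugating the scalar $v_q(H)$ has the same effect as substituting $q\leftrightarrow\overline{q}$ throughout, and the latter by definition produces $v_{\overline{q}}(H)$.

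The hard part is the third step: one must verify that every scalar appearing in the state-sum has this clean behavior under $q\mapsto\overline{q}$, and in particular that the normalization $c_q$ was chosen compatibly. In principle one could package this as a ``base change'' argument, noting that the Hopf algebra, the integral, and the functor $F_{\overline{U}_q}$ all live naturally over $\mathbb{Q}(q,\overline{q})$ and that the assignment $q\mapsto\overline{q}$ is a $\mathbb{Q}$-algebra automorphism compatible with complex conjugation on $\mathbb{C}$; the bookkeeping reduces to tracking how $c_q^{\pm1}$ is distributed among the evaluations and coevaluations in the diagram of $H^{\curlywedge}$.
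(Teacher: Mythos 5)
Your proposal is correct and follows essentially the same route as the paper's proof: the chain $v_q(H^*)=v_{\overline{U}_q^{\,op}}(H)=v_{\overline{q}}(H)=\overline{v_q(H)}$, using Proposition~\ref{prop:mirror-image-knot} for the first equality, the isomorphism of Lemma~\ref{lem:Uq-isom} together with the one-dimensionality of the space of integrals (Proposition~\ref{prop:integral-integral}) to identify the pullback of $\lambda_q$ with $\lambda_{\overline{q}}$ for the second, and a conjugation symmetry for the third. For that last step the paper states exactly the ``base change along complex conjugation'' packaging you mention at the end, so your more explicit matrix-coefficient bookkeeping (including the check $\overline{c_q}=c_{\overline{q}}$) is just an unwinding of the paper's one-line argument.
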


\begin{proof}
By Proposition~\ref{prop:integral-integral}, the composite of the
isomorphism in Lemma~\ref{lem:Uq-isom} with $\lambda_q$ coincides with
$\lambda_{\overline{q}}$.
Then Proposition~\ref{prop:mirror-image-knot} shows
$v_q(H^*)=v_{\overline{q}}(H)$.
It remains to prove $v_{\overline{q}}(H)= \overline{v_q(H)}$.
This equality holds since the Hopf algebra $\overline{U}_{\overline{q}}$
and the linear map
$\lambda_{\overline{q}}:\overline{U}_{\overline{q}}\to\mathbb{C}$ are
the base extensions of $\overline{U}_q$, $\lambda_q$, respectively,
along the complex conjugation $\mathbb{C}\to\mathbb{C}$.
\end{proof}

\begin{remark}
Let $q=e^{2\pi\sqrt{-1}/n}$.
For $n\leq4$ we have checked by computer calculation
that the invariant does not detect the handlebody-knots $0_1,\ldots,6_{16}$
given in~\cite{IshiiKishimotoMoriuchiSuzuki12}.
For $n>4$ the calculation takes so far too long time for us
to see whether the invariant is non-trivial.
\end{remark}



\begin{thebibliography}{000}

\bibitem{DoiTakeuchi89}
 Y.~Doi and M.~Takeuchi,
 \textit{Hopf-Galois extensions of algebras, the Miyashita-Ulbrich actions, and Azumaya algebras},
 J. Algebra \textbf{121} (1989), 488--516.
 
\bibitem{EtingofGelaki98}
 P. ~Etingof and S.~Gelaki, 
 \textit{On finite-dimensional semisimple and cosemisimple Hopf algebras in positive characteristic}, 
 Internat. Math. Res. Notices 1998, No.16, 851--864.
 
\bibitem{IshiharaIshii12}
 K.~Ishihara and A.~Ishii,
 \textit{An operator invariant for handlebody-knots},
 Fund. Math. \textbf{217} (2012), 233--247.

\bibitem{Ishii08}
 A.~Ishii,
 \textit{Moves and invariants for knotted handlebodies},
 Algebr. Geom. Topol. \textbf{8} (2008), 1403--1418.

\bibitem{IshiiKishimotoMoriuchiSuzuki12}
 A.~Ishii, K.~Kishimoto, H.~Moriuchi and M.~Suzuki,
 \textit{A table of genus two handlebody-knots up to six crossings},
 J. Knot Theory Ramifications \textbf{21} (2012), 1250035, 9 pp.

\bibitem{Kassel95}
 C.~Kassel,
 \textit{Quantum groups},
 Graduate Texts in Mathematics, 155. Springer-Verlag, New York, 1995.
 
\bibitem{LarsonRadford87}
 R.~G.~Larson and D.~E.~Radford,
 \textit{Semisimple cosemisimple Hopf algebras},
 Amer. J. Math. \textbf{109} (1987), 187--195.

\bibitem{LarsonRadford88}
 R.~G.~Larson and D.~E.~Radford,
 \textit{Finite-dimensional cosemisimple Hopf algebras in characteristic zero are semisimple},
 J. Algebra \textbf{117} (1988), 267--289.

\bibitem{Masuoka00}
 A.~Masuoka, 
 \textit{Cocycle deformations and Galois objects for some cosemisimple Hopf algebras of finite dimension},
 Contemp. Math. \textbf{207} (2000), 195--214.

\bibitem{Masuoka08}
 A.~Masuoka,
 \textit{Construction of quantized enveloping algebras by cocycle deformation},
 The Arabian Journal of Science and Engineering--Theme Issues, Vol.33, No.2C(2008), 387--406.

\bibitem{MizusawaMurakami14}
 A.~Mizusawa and J.~Murakami,
 \textit{Quantum invariants for handlebody-knots},
 preprint.

\bibitem{Montgomery93}
 S.~Montgomery,
 \textit{Hopf algebras and their actions on rings},
 Amer. Math. Soc., Providence, 1993.

\bibitem{Ohtsuki02}
 T.~Ohtsuki,
 \textit{Quantum invariants: A study of knots, 3-manifolds, and their sets},
 Series on Knots and Everything 29, World Scientific Publishing, River Edge, NJ, 2002.

\bibitem{Radford94a}
 D.~E.~Radford,
 \textit{The trace function and Hopf algebras},
 J. Algebra \textbf{163} (1994), 583--622.

\bibitem{Radford94b}
 D.~E.~Radford,
 \textit{On Kauffman's knot invariants arising from finite-dimensional Hopf algebras},
 in: J.~Bergen and S.~Montgomery (eds.),   
 \textit{Advances in Hopf algebras}, Lec. Notes in Pure and Appl. Math. \textbf{158}, 
 Marcel\ Dekker,  New York--Basel--Hong Kong, 1994, pp.205--266.

\bibitem{Schauenburg96}
 P.~Schauenburg,
 \textit{Hopf bigalois extensions},
 Comm.\ Algebra \textbf{24} (1996), 3797--3825.

\bibitem{Suzuki70}
 S.~Suzuki,
 \textit{On linear graphs in $3$-sphere},
 Osaka J. Math. \textbf{7} (1970), 375--396.

\bibitem{Sweedler69}
 M.~E.~Sweedler,
 \textit{Hopf algebras},
 W. A. Benjamin, Inc., New York, 1969.

\bibitem{Ulbrich82}
 K.~H.~Ulbrich,
 \textit{Galois Erweiterungen von nicht-kommutativen Ringen},
 Comm. Algebra \textbf{10} (1982), 655--672.

\bibitem{Yokota96}
 Y.~Yokota,
 \textit{Topological invariants of graphs in $3$-space},
 Topology \textbf{35} (1996), 77--87.

\end{thebibliography}
\end{document}